\newtheorem{theorem}{Theorem}[subsection]
\newtheorem{lemma}[theorem]{Lemma}
\newtheorem{corollary}[theorem]{Corollary}
\newtheorem{proposition}[theorem]{Proposition}
\theoremstyle{definition}
\newtheorem{assumption}[theorem]{Assumption}
\newtheorem{remark}[theorem]{Remark}
\numberwithin{equation}{section}
\theoremstyle{plain}
\numberwithin{equation}{section} 
\numberwithin{figure}{section} 
\theoremstyle{plain}
\theoremstyle{plain}
\theoremstyle{remark}
\newtheorem*{acknowledgement*}{Acknowledgement}
\theoremstyle{example}
\newcommand{\cA}{{\mathcal A}}
\newcommand{\cB}{{\mathcal B}}
\newcommand{\cC}{{\mathcal C}}
\newcommand{\cD}{{\mathcal D}}
\newcommand{\cE}{{\mathcal E}}
\newcommand{\cF}{{\mathcal F}}
\newcommand{\cG}{{\mathcal G}}
\newcommand{\cH}{{\mathcal H}}
\newcommand{\cL}{{\mathcal L}}
\newcommand{\cM}{{\mathcal M}}
\newcommand{\cN}{{\mathcal N}}
\newcommand{\cS}{{\mathcal S}}
\newcommand{\cW}{{\mathcal W}}
\newcommand{\cX}{{\mathcal X}}
\newcommand{\cZ}{{\mathcal Z}}
\newcommand{\te}{{\theta}}
\newcommand{\Om}{{\Omega}}
\newcommand{\om}{{\omega}}
\newcommand{\ve}{{\varepsilon}}
\newcommand{\del}{{\delta}}
\newcommand{\Del}{{\Delta}}
\newcommand{\gam}{{\gamma}}
\newcommand{\Gam}{{\Gamma}}
\newcommand{\Sig}{{\Sigma}}
\newcommand{\sig}{{\sigma}}
\newcommand{\al}{{\alpha}}
\newcommand{\be}{{\beta}}
\newcommand{\ka}{{\kappa}}
\newcommand{\bbE}{{\mathbb E}}
\newcommand{\bbN}{{\mathbb N}}
\newcommand{\bbR}{{\mathbb R}}
\newcommand{\bbZ}{{\mathbb Z}}
\newcommand{\bbI}{{\mathbb I}}
\newcommand{\bbQ}{{\mathbb Q}}
\newcommand{\brF}{{\bar F}}
\begin{document}
\title[]{A functional CLT for nonconventional polynomial arrays}
\author{Yeor Hafouta \\
\vskip 0.1cm
Department of mathematics\\
The Ohio state University and the Hebrew University}

\thanks{ }
\subjclass[2010]{60F10; 60J05; 37D99}%
\keywords{Central limit theorems; Nonconventional setup, Ergodic theorems; Stein's method, Markov chains, Dynamical systems}
\dedicatory{  }
 \date{\today}

\footnotetext[1]{}
\maketitle
\markboth{Y. Hafouta}{A functional CLT for nonconventional polynomial arrays} 
\renewcommand{\theequation}{\arabic{section}.\arabic{equation}}
\pagenumbering{arabic}

\begin{abstract}\noindent
In this paper we will prove a functional central limit theorem (CLT) for random functions of the form 
\[
\cS_N(t)=N^{-\frac12}\sum_{n=1}^{[Nt]} F(\xi_{q_1(n,N)},\xi_{q_2(n,N)},...,\xi_{q_\ell(n,N)})
\]
where the $q_i$'s are certain type of bivariate polynomials,  $F=F(x_1,...,x_\ell)$ is a locally H\"older continuous function and the sequence of random variables $\{\xi_n\}$ satisfies some mixing and moment conditions. This paper continues the line of research started in \cite{Kifer2010} and \cite{KV}, and it is a generalization of the results in \cite{HK3} and Chapter 3 of \cite{book}. We will also prove a strong law of large numbers (SLLN) for the averages $N^{-\frac12}\cS_N(1)$ which extends the results from the beginning of Chapter 3 of  \cite{book} to general bivariate polynomial functions $q_i$.
Our results hold true for sequences $\{\xi_n\}$  generated by a wide class of Markov chains and dynamical systems. As an application we obtain  
 functional CLT's for expressions of the form $N^{-\frac12}M([Nt])$, where $M(N)$ counts the number of multiple recurrence of the sequence $\{\xi_n\}$ to certain sets $A_1,...,A_\ell$ which occur at the times $q_1(n,N),...,q_\ell(n,N)$, as well as SLLN's for these $M(N)$'s. One of the simplest examples is when $\xi_n$ is $n$-the digit of a random $m$-base or continued fraction expansion, and each $A_i$ is singleton (i.e. it represent one possible value of a digit).
\end{abstract}

\section{Introduction}
Since the ergodic theory proof of Szemer\'edi's theorem due to Furstenberg (see \cite{Fur1977}), ergodic theorems for  ``nonconventional" averages
\[
\frac 1N\sum_{n=1}^NT^{q_1(n)}f_1\cdots T^{q_\ell(n)}f_\ell
\]
became a well established field of research (the term "nonconventional" comes from \cite{Fu}). Here $T$ is a measure preserving
transformation, $f_1,...,f_\ell$ are bounded measurable functions and the $q_i$'s are
functions taking  positive integer values on the set of positive integers.
General polynomial $q_i$'s in this setup were first
considered in \cite{Be}. Taking $f_i$'s to be indicators of measurable sets
asymptotic results on numbers of multiple recurrences follow, which was
the original motivation for this study.

From a probabilistic point of view, ergodic theorems are laws of large numbers, and so  the question about other probabilistic limit theorems is natural.  In \cite{Kifer2010} and \cite{KV} central  limit theorems for random functions of the form
\begin{equation}\label{Sums}
N^{-\frac12}\sum_{n=1}^{[Nt]} F(\xi_{q_1(n)},\xi_{q_2(n)},...,\xi_{q_\ell(n)})
\end{equation}
were obtained. Here $F=F(x_1,...,x_\ell)$ is a locally H\"older continuous function, $\{\xi_n\}$ is a sequence of random variables satisfying some mixing and moment conditions and the $q_i$'s are functions satisfying certain growth conditions, which take positive integer values on the set of positive integers. Consedering polynomial $q_i$'s, the growth conditions \cite{Kifer2010} and \cite{KV} exclude the case when  some of the non-linear polynomials among $q_1,...,q_\ell$ have the same degree.
In \cite{HK3} we extended the above functional CLT's to the case when all of the $q_i$'s are polynomials, with no restrictions on their degrees. 

In this paper we will consider more general random functions of the form
\[
\cS_N(t)=N^{-\frac12}\sum_{n=1}^{[Nt]} F(\xi_{q_1(n,N)},\xi_{q_2(n,N)},...,\xi_{q_\ell(n,N)})
\]
where each $q_i$ is a bivariate polynomial with integer coefficients. The motivation for considering this case where the $q_i$'s depend also on $N$  comes from applications to multiple recurrence (see the last two paragraphs of this section), and  a relatively simple but still very interesting combinatorial number theoretic example can be described as follows. 
For each point $\om\in[0,1)$ consider its
base $m$ or continued fraction expansions with digits $\xi_k(\om)$, $k=1,2,...$. 
Next, count the number $S_N(\om)$ of those $\ell$-tuples 
$q_1(n,N),...,q_\ell(n,N),\,n\leq N$ for which, say, $\xi_{q_j(n,N)}(\om)=a_j,\,j=1,...,\ell$
for some fixed integers $a_1,...,a_\ell$. We have 
\[
S_N(\om)=\sum_{n=1}^N\prod_{j=1}^\ell\del_{a_j\xi_{q_j(n,N)}(\om)}
\]
where $\del_{ks}=1$ if $k=s$ and $=0$ otherwise. To make $\xi_k$'s random variable, we supply the unit interval with an 
appropriate probability measure such as the Lebesgue measure for base $m$ expansions
and the Gauss measure for continued fraction expansions (see Section \ref{Applications}). Now we can view $S_N$ as a random variable and 
set $\cS_N(t)=S_{[Nt]}$. 


In \cite{Kifer DCDS} several $L^2$ ergodic theorems were proved for the averages $N^{-\frac12}\cS_N(1)$, when $F(x_1,...,x_\ell)$ has the form $F(x_1,...,x_\ell)=\prod_{i=1}^\ell f_i(x_i)$. In \cite{Kifer SLLN} a strong law of large numbers (SLLN) was proved for $N^{-\frac12}\cS_N(1)$ when the $q_i$'s depend only on $n$, while in Chapter 3 of \cite{book}, under certain mixing conditions, we proved an SLLN  and
a CLT for $\cS_N(1)$  for linear functions $q_i(n,N)=a_in+b_iN$. The above results from \cite{Kifer SLLN} and \cite{book}
hold true for functions $F$ which do not necessarily have the above product form.
In this paper we will prove an SLLN for genera polynomials $q_i(n,N)$ and a functional CLT for certain classes of  polynomials $q_i(n,N)$, and our results generalize both \cite{HK3} and the above results from  Chapter 3 in \cite{book}.

A crucial step in proving a functional central limit theorem (regardless of the proofs method) is to show that the asymptotic covariances 
\[
b(t,s)=\lim_{N\to\infty}\bbE[\cS_N(t)\cS_N(s)]
\]
exist. Using some mixing conditions, we will show that these limits exist for several classes of bivariate polynomials $q_i(n,N)$. One of  the main difficulties arising here is to understand the asymptotic behaviour as $N\to\infty$ of certain sequences of sets $A_{N}\subset[1,N]$ which are related to  approximation properties of bivariate polynomial differences of the form 
$|q(m,N)-p(n,N)|$. This type of behaviour is investigated independently in Section \ref{Sec2}. In section \ref{PosSec} we address the question of the positivity of $D^2=\lim_{N\to\infty}\text{Var}(\cS_N(1))$, which is important since  $\cS_N$ converges towards the  process which equals identically $0$ when $D^2=0$. When any two non-linear polynomials $q_i$ and $q_j$ do not satisfy that $q_i(n,N)-q_j(cn+r,N)=z$, for some rational $c,r,z$, $c\not=1$, and all $n$ and $N$, or when the polynomials $q_i(n,N)$ are ordered so that $q_1(n,N)\leq q_2(n,N)\leq...\leq q_\ell(n,N)$ for any sufficiently large $n$ and $N$ then we give a complete characterization of the positivity of $D^2$. In more general circumstances we provide sufficient conditions for this positivity.

The CLT's from \cite{HK3} and Chapter 3 of \cite{book} rely on classical martingale approximation techniques, which was shown in \cite{KV} to be effective in the nonconventional setup. In the past decades Stein's method has become one of the main tools to prove central limit theorems. In \cite{Hafouta ECP} this method was applied successfully for nonconventional sums of the form (\ref{Sums}), and in Chapter 1 of \cite{book} we generalized \cite{Hafouta ECP}, and, in particular, showed that Stein's method yields a functional CLT in the case when the $q_i$'s depend only on $n$. Our proof of the CLT for the random functions $\cS_N$ defined above will also rely on an appropriate functional version  of Stein's method. When the appropriate limiting covariances exist, Stein's method in the functional setup can be applied successfully for random functions of the form $\sum_{n=1}^{[Nt]}X_{n,N}$ when the triangular array $\{X_{n,N}:\,1\leq n\leq N\}$ satisfies certain type of strong local dependence conditions, and our arguments will be based on showing that the summands in $\cS_N$ have such a local dependence structure. In fact, we will use this structure also to control the growth rate of the first four moments of $\cS_N(1)$, which is the key to the proof of the SLLN for $N^{-\frac12}\cS_N(1)$.

Our results hold true when, for instance,
$\xi_n=T^nf$ where $f=(f_1,...,f_d)$, $T$ is a topologically mixing subshift
of finite type, a hyperbolic diffeomorphism or an expanding transformation
taken with a Gibbs invariant measure, as well as in the case when
$\xi_n=f(\Upsilon_n), f=(f_1,...,f_d)$ where $\Upsilon_n$
is a Markov chain satisfying the Doeblin condition considered as a
stationary process with respect to its invariant measure. In the dynamical
systems case each $f_i$ should be either H\" older
continuous or piecewise constant on elements of Markov partitions. 
We can also extend our results to certain classes of dynamical systems $T$ which can be modelled by a Young tower (even though
the conditions specified in Section \ref{Sec1} do not seem to hold true, see Section  \ref{YoungSec}). As a consequence, our results hold true for
a variety of non-uniformly hyperbolic or distance expanding dynamical systems $T$, as well.
We refer the readers to Section \ref{Applications} for a detailed description of the sequences $\{\xi_n\}$  mentioned above.

As an application we can consider $F(x_1,...,x_\ell)=x_1^{(1)}\cdots
 x_\ell^{(\ell)}$,
$x_j=(x_j^{(1)},...,x_j^{(\ell)})$, $\xi_n=(X_1(n),...,X_\ell(n))$,
$X_j(n)=\bbI_{A_j}(T^n x)$ in the dynamical systems case and
$X_j(n)=\bbI_{A_j}(\Upsilon_n)$ in the Markov chain case where
$\bbI_{A}$ is the indicator of a set $A$. Let $M(N)$ be the number of $l$'s
between $0$ and $N$ for which $T^{q_{j}(l,N)}x\in A_j$ for $j=0,1,...,\ell$
(or $\Upsilon_{q_{j}(l,N)}\in A_j$ in the Markov chains case), where we set $q_{0}=0$, namely
the number of $\ell-$tuples of return times to
$A_j$'s (either by $T^{q_j(l,N)}$ or by $\Upsilon_{q_j(l,N)}$). Then
our results yield a functional central limit theorem for the number $M([Nt])$ and also an SLLN for $M(N)$.

\begin{acknowledgement*}
I would like to thank professor Yuri Kifer for suggesting me the problem and for several helpful discussions, as well.
\end{acknowledgement*}

\section{Limit theorems for nonconventional polynomial arrays}\label{Sec1}
\subsection{Preliminaries}
Our setup consists of a $\wp$-dimensional stochastic process $\{\xi_n,n\geq0\}$
on a probability space $(\Omega,\cF,P)$ and a family
of sub-$\sig$-algebras $\cF_{k,l}$, $-\infty\leq k\leq l\leq\infty$
such that $\cF_{k,l}\subset\cF_{k',l'}\subset\cF$ if $k'\leq k$
and $l'\geq l$. We will impose restrictions on the mixing coefficients
\begin{equation}\label{MixCoef1}
\phi(n)=\sup\{\phi(\cF_{-\infty,k},\cF_{k+n,\infty}): k\in\bbZ\}
\end{equation}
where for any two sub-$\sigma$-algebras $\cG$ and $\cH$ of $\cF$
\begin{equation}\label{general phi mixing}
\phi(\cG,\cH)=\sup\Big\{\Big|\frac{P(\Gam\cap\Del)}{P(\Gam)}-P(\Del)\Big|
: \Gam\in\cG,\,\Del\in\cH,\,P(\Gam)>0\Big\}.
\end{equation} 
The quantity $\phi(\cG,\cH)$ measures the dependence between the $\sigma$-algebras $\cG$ and $\cH$; it vanishes if and only if $\cG$ and $\cH$ are independent.

In order to ensure some applications,
in particular, to dynamical systems we will not assume that $\xi_n$
is measurable with respect to $\cF_{n,n}$ but instead impose restrictions
on the approximation rate
\begin{equation}\label{AprxCoef}
\beta_{q}(r)=\sup_{k\geq0}\|\xi_k-\bbE[\xi_k|\cF_{k-r,k+r}]\|_{L^q}.
\end{equation}
Our results will be obtained under the assumption that $\beta_{q}(n)$ and $\phi(n)$ converge sufficiently fast to $0$ (see (\ref{Mix0}) and Assumption \ref{Mix2} below), for a certain choice of $q$'s.
We note that
when the sequence $\{\xi_n,n\geq0\}$ itself satisfies some strong quantitative mixing conditions (e.g. when it forms a geometrcially ergodic Markov chain or when $\xi_n$ depends only on the $n$-th coordinate of a topologically mixing subshift of finite type, see Section \ref{Applications}), then our results  hold true when $\cF_{k,l}$ is the $\sigma$-algebra generated by $\xi_{\max(0,k)},...,\xi_{\max(0,l)}$. In this case we have $\beta_{q}(r)=0$ for any $q$ and $r$. 
When $\xi_n=f(T^n x)$ for several types of dynamical systems $T$ and functions $f$, the sequence $\{\xi_n,n\geq0\}$ will only be mixing in an ergodic theoretic sense, but when the function $f$ is H\"older continuous we will have that $\beta_{\infty}(r)\leq c\del^r$ for some $c>0$ and $\del\in(0,1)$ (see Section \ref{Applications}), where $\beta_{\infty}(r)$ is the approximation rate corresponding to some families of $\sigma$-algebras $\cF_{k,l}$ for which $\phi(n)\leq c\del^n$. 	For instance, when $T$ is a topologically mixing subshift of finite type then we can take $\cF_{k,l}$ to be the $\sigma$-algebra generated by the projections of the coordinates indexed by $\max(k,0),....,\max(l,0)$, while in the case when $T$ is some hyperbolic map we can take $\cF_{k,l}=\bigwedge_{j=k}^{l}T^{-j}\cM$, where $\cM$ is a Markov partition with sufficiently small diameter (see Section \ref{Applications}).


Next, we will discuss our stationarity  assumptions. 
We do not require the usual stationarity of the process $\{\xi_n, n\geq 0\}$, and instead we 
only assume that the distribution of $\xi_n$ does not depend on $n$ and that
the joint distribution of $(\xi_n,\xi_m)$ depends
only on $n-m$, which we write for further
reference by
\begin{equation}\label{xi dist}
\xi_n\stackrel {d}{\sim}\mu\,\,\,\text{ and }\,\, \big(\xi_n,\xi_m\big)\stackrel {d}{\sim}
\mu_{m-n}
\end{equation}
where $Y\stackrel {d}{\sim}\mu$ means that $Y$ has $\mu$ for its
distribution. 

Let $F=F(x_1,...,x_\ell)$, $x_j\in\bbR^\wp$
be a function on $(\bbR^\wp)^\ell$ which satisfies the following smoothness and growth conditions: 
for some $K>0$, $\iota\geq 0$,
$\kappa\in(0,1]$ and all $x_i,z_i\in\bbR^\wp$, $i=1,...,\ell$,
we have
\begin{equation}\label{F Hold}
|F(x)-F(z)|\leq K[1+\sum_{i=1}^\ell(|x_i|^\iota+|z_i|^\iota)]
\sum_{i=1}^\ell|x_j-z_j|^\kappa
\end{equation}
and
\begin{equation}\label{F Bound}
|F(x)|\leq K[1+\sum_{i=1}^\ell|x_i|^\iota]
\end{equation}
where $x=(x_1,...,x_\ell)$ and  $z=(z_1,...,z_\ell)$. We note that the role of (\ref{F Hold}) is to insure that we can approximate  $F(\xi_{n_1},\xi_{n_2},...,\xi_{n_\ell})$ by $F(\xi_{n_1,r},\xi_{n_2,r},...,\xi_{n_\ell,r})$ where $n_1,...,n_\ell $ are arbitrary  nonnegative integers,  $\xi_{n,r}=\mathbb E[\xi_n |\mathcal F_{n-r,n+r}]$ and $r$ is sufficiently large, while the role of (\ref{F Bound}) is to insure that $F(\xi_{n_1},\xi_{n_2},...,\xi_{n_\ell})$ will satisfy  some moment conditions (see a discussion after Assumption \ref{Moment Ass}).
Such an approximations is possible, of course, only when $\lim_{r\to\infty}\beta_q(r)=0$ (for some $q$) and the $\xi_i$'s satisfy certain moment conditions (such conditions are imposed in Assumptions \ref{Moment Ass} and \ref{Mix2} below).
We also remark that in the case when $\xi_n$ is measurable with respect to $\mathcal F_{n,n}$ then our results will follow with any
Borel function $F$ satisfying (\ref{F Bound}) without imposing (\ref{F Hold}), since then our proof do not require any approximations of the above form.

Our  moment conditions on the $\xi_n$'s are summarized in the following
\begin{assumption}\label{Moment Ass}
There exist $w>4$, $q\geq 1$ and $v>0$ such that 
\begin{equation}
\frac1w>\frac\iota v +\frac{\ka}q,\,\,\,\|\xi_n\|_{L^w}<\infty,\,\,\text{ and }\,\|\xi_n\|_{L^q}<\infty
\end{equation}
where $\iota$ and $\ka$ come from (\ref{F Hold}) and (\ref{F Bound}).
\end{assumption}
We note that Assumption \ref{Moment Ass}
 insures that $\|F(\xi_{n_1},\xi_{n_2},...,\xi_{n_\ell})\|_{L^w}\leq C$ for some $C>0$ and all nonnegative integers $n_1,...,n_\ell$. When $\phi(n)$ and $\beta_{q_0}^\kappa(n)$ (for some $q_0$) converge exponentially fast to $0$ as $n\to\infty$ then in order to get the strong law of large numbers (Theorem \ref{LLN}) and the functional central limit theorem (Theorem \ref{CLTthm}) we can take any $w>4$, $q=q_0$ and any $v>0$ so that $\frac1w>\frac\iota v +\frac{\ka}{q_0}$. We also remark that we have presented the moment conditions with some dependence on the regularity on $F$ (i.e. on $\ka$ and $\iota$), but, of course, we could have first assumed that the properties described in Assumption \ref{Moment Ass} hold true with some $w,q,v,\ka\in(0,1]$ and $\iota\geq0$, and then consider only functions satisfying (\ref{F Hold}) and (\ref{F Bound}) with these $\ka$ and $\iota$.

Next, let $q_1(n,N),...,q_\ell(n,N)$ be polynomials with nonnegative integer coefficients which do not depend only on $N$.
We assume here, for the sake of convenience, that $\deg q_i\leq\deg q_{i+1}$ for any $i=1,2,...,\ell-1$, where the degree of a bivariate polynomial $p(x,y)$ is the degree of the univariate polynomial $p(x,x)$, and that 
 the differences $q_i-q_j$ are not constants (the case of constant difference can be treated as in Section 3 of \cite{HK3}).
For each $N$ set 
\[
S_N=\sum_{n=1}^{N} F(\xi_{q_1(n,N)},\xi_{q_2(n,N)},...,\xi_{q_\ell(n,N)}).
\]

The first result we will prove in this paper is the following strong law of large numbers:
\begin{theorem}\label{LLN}
Suppose that Assumption \ref{Moment Ass} holds true with numbers $w$ and $q$ so that
\begin{equation}\label{Mix0}
\sum_{l=1}^\infty l\big((\phi(l))^{1-\frac 4w}+\beta_q^\ka(l)\big)<\infty
\end{equation}
where $\ka$ comes from \ref{F Hold}.
Then, $P$-almost surely we have 
\[
\lim_{N\to\infty}\frac1N S_N=\bar F:=\int F(x_1,...,x_\ell)d\mu(x_1)\dots d\mu(x_\ell).
\]
\end{theorem}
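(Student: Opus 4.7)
The plan is to combine an $L^4$ moment estimate with the Borel--Cantelli lemma, exactly as hinted at in the introduction when the author says that the local dependence structure will also be used ``to control the growth rate of the first four moments of $\cS_N(1)$''. Write $S_N-\bbE S_N=\sum_{n=1}^N X_{n,N}$ with $X_{n,N}=F(\xi_{q_1(n,N)},\dots,\xi_{q_\ell(n,N)})-\bbE[\cdot]$. The strategy has three legs: (i) an approximation step that reduces to ``block-measurable'' summands, (ii) convergence of the means, and (iii) a fourth-moment bound that decays strictly faster than $N^3$.

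First I would truncate: replace each $\xi_{q_i(n,N)}$ by $\xi_{q_i(n,N),r_N}=\bbE[\xi_{q_i(n,N)}\mid\cF_{q_i(n,N)-r_N,q_i(n,N)+r_N}]$, with $r_N$ a small positive power of $N$. The Hölder bound (\ref{F Hold}), combined with Assumption \ref{Moment Ass} and Hölder's inequality, turns the approximation rate $\beta_q(r_N)^\ka$ into a deterministic $L^1$ error on $N^{-1}S_N$ that vanishes because of the summability condition (\ref{Mix0}). After this truncation, each summand $\widetilde X_{n,N}$ is measurable with respect to $\cF_{a_n,b_n}$ on a window of width $O(r_N)$ around $\{q_i(n,N)\}_i$.

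Second, I would establish $N^{-1}\bbE S_N\to\bar F$. Because every difference $q_i-q_j$ is a non-constant polynomial, for each fixed $N$ the gaps $|q_i(n,N)-q_j(n,N)|$ are bounded below by a polynomial in $n$ except on a set of $n\in[1,N]$ of size $o(N)$. On the complement, $\phi$-mixing together with (\ref{xi dist}) gives $\bbE F(\xi_{q_1(n,N)},\dots,\xi_{q_\ell(n,N)})=\bar F+o(1)$ with an error controlled by $\phi(d_n)$ (where $d_n$ is the smallest gap), and Cesàro averaging yields the claim.

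Third, and this is the key step, I would prove
\[
\bbE\bigl|S_N-\bbE S_N\bigr|^4\leq C\, N^{3-\alpha}
\]
for some $\alpha>0$. Expanding, one sums $\bbE[\widetilde X_{n_1,N}\widetilde X_{n_2,N}\widetilde X_{n_3,N}\widetilde X_{n_4,N}]$ over 4-tuples in $[1,N]^4$. Each factor has uniformly bounded $L^w$ norm (Assumption \ref{Moment Ass} with $w>4$), and each is measurable on a window of controlled width; grouping 4-tuples by the partition induced by the ``$r_N$-close'' relation on their support windows and applying the standard $\phi$-mixing moment inequality with exponent $1-4/w$ produces a bound whose leading term is
\[
C\sum_{l\geq 1}l\bigl((\phi(l))^{1-4/w}+\beta_q^\ka(l)\bigr)\cdot N^{3-\alpha},
\]
and this sum is finite thanks to (\ref{Mix0}). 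Markov's inequality then gives $P(|S_N-\bbE S_N|>\ve N)\leq C/(\ve^4 N^{1+\alpha})$, which is summable, and Borel--Cantelli combined with the second step finishes the proof.

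The main obstacle is the fourth-moment step in the triangular setting: since $q_i(n,N)$ genuinely depends on $N$, the $\widetilde X_{n,N}$ do not form a stationary sequence, and one cannot simply invoke a classical $\phi$-mixing moment inequality for a single process. Instead one must control, for varying $N$ and for each prescribed pattern of degree comparisons among $q_1,\dots,q_\ell$, how many pairs $(n,n')$ yield a small $\min_{i,j}|q_i(n,N)-q_j(n',N)|$. Extracting uniform combinatorial bounds of this kind is precisely the subject of the forthcoming Section \ref{Sec2}, which the SLLN proof will draw on.
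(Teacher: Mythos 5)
Your overall strategy --- centering, a fourth-moment bound, Markov's inequality, and Borel--Cantelli --- is exactly the skeleton of the paper's proof: the paper proves $\bbE\big(\sum_{k=1}^M F_{n_k,N}\big)^4\leq CM^2$ (Lemma~\ref{4 lemma}), then concludes $P\{N^{-1}|S_N|\geq N^{-1/8}\}\leq C N^{-3/2}$ and sums. Your target $\bbE|S_N-\bbE S_N|^4\leq CN^{3-\alpha}$ is weaker than the paper's $CN^2$ but would still be summable, so the final step is fine.

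However, there are two substantive issues with how you propose to get there. First, you truncate with a \emph{uniform} scale $r_N$ (a fixed power of $N$), which is the paper's CLT strategy, not its SLLN strategy. The paper explicitly remarks, right after stating~(\ref{Mix0}), that in the SLLN proof the truncation scale is $r(n)$, \emph{depending on $n$}, and that this is precisely why $\phi$ and $\beta_q$ appear \emph{additively} in~(\ref{Mix0}); by contrast, the CLT hypothesis~(\ref{Mix2cond}) is multiplicative because there $r(N)$ is uniform. Concretely, the paper never performs an up-front truncation at all: in the proofs of Lemmas~\ref{expectation lemma}--\ref{4 lemma} it applies Proposition~\ref{1.3.14} directly, and inside that proposition the window widths $r_i=[\tfrac13(m_i-n_{i-1})]$ are set equal to (a third of) the actual gaps, so the truncation scale is automatically tailored to the configuration. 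With a uniform $r_N$ you incur a truncation error of size $\sim\beta_q^\ka(r_N)$ on $N^{-1}S_N$, and controlling this \emph{almost surely} along all $N$ from the summability hypothesis~(\ref{Mix0}) alone is not automatic --- you would at least need a subsequence/interpolation argument that you do not spell out. The gap-dependent approach avoids this entirely.

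Second, the crucial counting step is both under-developed and misattributed. You defer the ``uniform combinatorial bounds'' to Section~\ref{Sec2}, but Section~\ref{Sec2} concerns exploding differences of bivariate polynomials, which is needed for the existence of limiting covariances in Theorem~\ref{VarThm} --- it plays no role in the SLLN. What the fourth-moment bound actually requires is much more elementary: for fixed $n$, $N$ and an integer gap $k$, the number of $m$'s with $\min_{i,j}|q_i(n,N)-q_j(m,N)|=k$ is $O(1)$ uniformly in $n,N,k$, simply because each $q_j(\cdot,N)$ is a polynomial of bounded degree and a polynomial equation has a bounded number of roots. In Lemma~\ref{4 lemma} this is packaged as: the sets $\Del(u_1,u_2,k_1,k_2,N)$ have cardinality $\leq A$ uniformly. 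Your sketch of ``grouping 4-tuples by the $r_N$-close relation'' and the displayed leading term $C\sum_l l\big((\phi(l))^{1-4/w}+\beta_q^\ka(l)\big)\cdot N^{3-\alpha}$ do not make this structure explicit, and the factor $N^{3-\alpha}$ does not arise naturally from the calculation (the paper's count yields $N^2$ from the $O(N^2)$ choices of ``anchor'' pair times the convergent series over gaps). You should replace the appeal to Section~\ref{Sec2} with this elementary polynomial root-counting argument and carry out the tuple classification (two ``close'' pairs, one ``far'' pair, controlled via two applications of Proposition~\ref{1.3.14}) to obtain the $CM^2$ bound.

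Your second step (convergence of the means) is fine in spirit; the paper in fact proves the stronger statement $\sum_{n=1}^N|\bbE F_{n,N}|\leq C$ (Lemma~\ref{expectation lemma}), again by the same root-counting observation applied to the intra-tuple gaps $\del_N(n,n)$.
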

The reason that the mixing and approximation coefficients $\phi(l)$ and $\beta_q(l)$ appear additively in (\ref{Mix0}) is that in the proof of Theorem \ref{LLN} we approximate each summand  $F(\xi_{q_1(n,N)},\xi_{q_2(n,N)},...,\xi_{q_\ell(n,N)})$ by 
$F(\xi_{q_1(n,N),r(n)},\xi_{q_2(n,N),r(n)},...,\xi_{q_\ell(n,N),r(n)})$, where $r(n)$ is some number which depends on $n$ and $\xi_{m,r}=\mathbb E[\xi_m|\cF_{m-r,m+r}]$ for any $m$ and $r\geq0$ (or course, we will require that $\lim_{n\to\infty}r(n)=\infty$ at a certain rate).

The main result in this section is a functional central limit theorem for the sequence of random functions $\cS_N:[0,1]\to\bbR,\,N\in\bbN$ given by
\[
\cS_N(t)=N^{-\frac12}\sum_{n=1}^{[Nt]} F(\xi_{q_1(n,N)},\xi_{q_2(n,N)},...,\xi_{q_\ell(n,N)})=N^{-\frac12}S_{[Nt]}.
\]
To simplify  formulas we assume the (asymptotic) centering condition 
\begin{equation}\label{F bar}
\brF =\int F(x_1,...,x_\ell)d\mu(x_1)\dots d\mu(x_\ell)=0
\end{equation}
which is not really a restriction since
we can always replace $F$ by $F-\brF$. We note that $\brF$ is not the expectation of $F(\xi_{q_1(n,N)},\xi_{q_2(n,N)},...,\xi_{q_\ell(n,N)})$, but the arguments in our proofs show that, in the circumstances of Theorem \ref{LLN}, the expectation of $\frac 1 N S_N$ converges to $\bar F$ as $N\to\infty$ (this is what  expected to happen, in view of Theorem \ref{LLN}).

Our main mixing and approximation conditions for the central limit theorem is the following 
\begin{assumption}\label{Mix2}
There exist $d\geq 1$ and $\te>2$ such that for any $n\in\bbN$,
\begin{equation}\label{Mix2cond}
\phi(n)+\big(\beta_{q}(n)\big)^\ka\leq dn^{-\te}.
\end{equation}
\end{assumption}
In contrast to the proof of Theorem \ref{LLN}, in the proof of the CLT we will approximate each summand  $F(\xi_{q_1(n,N)},\xi_{q_2(n,N)},...,\xi_{q_\ell(n,N)})$ by 
$F(\xi_{q_1(n,N),r(N)},\xi_{q_2(n,N),r(N)},...,\xi_{q_\ell(n,N),r(N)})$, where $r(N)$ depends on $N$. This is the reason that $\phi(n)$ and $\beta_{q}(n)$ do not appear additively in (\ref{Mix2cond}).

\subsection{Classes of polynomials}
We  describe here several classes of polynomials for which we can derive the weak invariance principle for the random functions $\cS_N(\cdot)$. 

First, we  assume here that the linear polynomials among the $q_i$'s have the form 
\begin{equation}\label{linear}
q_i(n,N)=a_in+b_iN
\end{equation}
for some integers $a_i$ and $b_i$, namely that $q_i(0,0)=0$. Our additional requirements from the linear polynomials are described in the following
\begin{assumption}\label{A1}
For any linear $q_i$ and $q_j$
the difference $a_i-a_j$ is
divisible by the greatest common divisor of $b_i$ and $b_j$ where the
$a_i$'s and $b_i$'s are the same as in (\ref{linear}). 
\end{assumption}

Next, in order to describe our conditions regarding the non-linear polynomials among the $q_i$'s, we need the following definitions.
Let $q(n,N)$ and $p(n,N)$ be two bivariate polynomials with nonnegative integer coefficients. We will say that $q$ and $p$ have \textit{exploding differences }
if  for any $\del\in(0,1)$ there exist constants $C_\del>0$ and $N_\del$ and sets $\Gam_{N,\del}\subset[1,N]$, whose cardinality does not exceed $\del N$, so that for any $N>N_\del$ and $n\in[\del N,N]\setminus\Gam_{N,\del}$,
\begin{equation}\label{Exp dif}
\min_{m\in[\del N,N]}|q(m,N)-p(n,N)|\geq C_\del N.
\end{equation}
It is clear that any two polynomials $q$ and $p$ with different degrees have exploding differences and that two linear polynomials do not have exploding differences.
In Section \ref{Sec2} we will give several classes of examples of  polynomials $q$ and $p$ with the same non-linear degree which have exploding differences. 




Next,
for any $1\leq i\leq\ell$ such that $\deg q_i=k>1$ write 
\begin{eqnarray}
q_i(n,N)=\sum_{u=0}^{k}N^u Q_{i,u}(y)
\end{eqnarray}
where $y=n/N$ and each $Q_{i,u}$ is a polynomial with non-negative integer coefficients whose degree does not exceed $u$.
For any distinct $1\leq i,j\leq\ell$ such that $\deg q_i=\deg q_j=k>1$, we will say that $q_i$ and $q_j$ are \textit{linearly related} if  $Q_{i,k}$ and $Q_{j,k}$ are not constants and there exist constants $c_{i,j},r_{i,j}\in\bbR$, $c_{i,j}>0$ so that $Q_{j,k}(c_{i,j}y)=Q_{i,k}(y)$
and 
\[
Q_{i,k-1}(y)-Q_{j,k-1}(c_{i,j}y)=r_{i,j}Q_{j,k}'(c_{i,j}y)
\]
for any $y\in[0,1]$. Then, any two polynomials $q_i$ and $q_j$ which do not depend on $N$ and have the same non-linear degree $k$ are linearly related. Indeed, in this case we have $Q_{i,u}(y)=a_{i,u}y^u$ and $Q_{j,u}(y)=a_{j,u}y^u$ for some integers $a_{i,u}$  and $a_{j,u}$ so that $a_{i,k},a_{j,k}>0$, and so we can take 
$
c_{i,j}=(\frac{a_{i,k}}{a_{j,k}})^{1/k}$ and, with $c_{i,j,k}=(c_{i,j})^{k-1}$, 
$r_{i,j}=\frac{a_{i,k-1}-a_{j,k-1}c_{i,j,k}}{ka_{j,k}c_{i,j,k}}.
$ 
This means that all the results obtained in this paper generalize the results from \cite{HK3}, in which a nonconventional polynomial CLT was obtained in the case when all the $q_i$'s are polynomial functions of the variable $n$. Observe also that the linear relation condition  involves only the $Q_{i,k}$'s and $Q_{i,k-1}$'s and
note that $q_i$ and $q_j$ (with the same non-linear degree $k$) are linearly related if 
\begin{eqnarray*}
q_i(n,N)=\alpha_in^{s}N^{k-s}+\beta_in^{s-1}N^{k-s}+G_i(n,N)\\\text{ and }\,\, 
q_j(n,N)=\alpha_jn^{s}N^{k-s}+\beta_jn^{s-1}N^{k-s}+G_j(n,N)
\end{eqnarray*}
for some $0<s\leq k$, polynomials  $G_i$ and $G_j$ whose degree does not exceed $k-2$ and positive integers $\al_i,\al_j,\be_i$ and $\be_j$.
We refer the readers to Corollary \ref{Cor linear case} for a characterization when two linearly related polynomials have exploding differences (see also Remark \ref{Remark equiv} below).

We will obtain out results under the following 

\begin{assumption}\label{A2}
Any two non-linear polynomials $q_i$ and $q_j$  are either linearly related, or the differences of $q_i$ and $q_j$ explode.  
\end{assumption}

\subsection{Limiting covariances and the CLT}
Our first CLT related result is the following
\begin{theorem}\label{VarThm}
Suppose that  Assumptions \ref{Moment Ass} and \ref{Mix2} are satisfied with
numbers $w$ and $\te$ so that  $\te>\frac{4w}{w-2}$.
Assume, in addition, that  Assumptions \ref{A1} and \ref{A2} are satisfied.
 Then the limits 
\[
b(t,s)=\lim_{N\to\infty}\bbE [\cS_N(t)\cS_N(s)]
\]
exist, where $0\leq t,s\leq 1$. In particular, the limit 
\[
D^2=\lim_{N\to\infty}\bbE S_N^2=\lim_{N\to\infty}\text{Var}(S_N)
\]
exists, where $S_N=\cS_N(1)$.
\end{theorem}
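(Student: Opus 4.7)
My plan is to expand
\[
\bbE[\cS_N(t)\cS_N(s)] = \frac{1}{N}\sum_{n=1}^{[Nt]}\sum_{m=1}^{[Ns]} \bbE[F_n^N F_m^N],
\qquad F_n^N:=F(\xi_{q_1(n,N)},\ldots,\xi_{q_\ell(n,N)}),
\]
and to split the double sum into a well-separated part, which vanishes by $\phi$-mixing, and a near-collision part, whose limit is identified via the counting results of Section \ref{Sec2} together with the bivariate stationarity (\ref{xi dist}).

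The first step is a standard truncation: replace each $\xi_k$ by $\xi_{k,r}:=\bbE[\xi_k\mid\cF_{k-r,k+r}]$, set $F_n^{N,r}:=F(\xi_{q_1(n,N),r},\ldots,\xi_{q_\ell(n,N),r})$, and use (\ref{F Hold}) together with Assumption \ref{Moment Ass} and H\"older's inequality to get $\|F_n^N-F_n^{N,r}\|_{L^2}\le C\beta_q(r)^\ka$. Cauchy--Schwarz then bounds the truncation error in the double sum by $O(N\beta_q(r)^\ka)$, which tends to $0$ for $r=r(N)\to\infty$ chosen slowly enough. Next, a standard $\phi$-mixing covariance inequality yields, whenever every pairwise gap $|q_i(n,N)-q_j(m,N)|$ exceeds $2r+L$,
\[
\bigl|\bbE[F_n^{N,r}F_m^{N,r}] - \bbE F_n^{N,r}\,\bbE F_m^{N,r}\bigr| \le C\phi(L)^{1-2/w},
\]
using the uniform $L^w$ bound with $w>4$. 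Choosing $L=L(N)\to\infty$ slowly, the hypothesis $\te>\tfrac{4w}{w-2}$ reconciles the $N^2$ pair count with the $\phi(L)^{1-2/w}$ bound, so that the well-separated pairs contribute $o(1)$. The product-of-means term is handled by the same mixing reduction, which brings each $\bbE F_n^{N,r}$ close to $\brF=0$ once the indices $\{q_i(n,N)\}_{i}$ within a single row are themselves pairwise separated.

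It remains to analyse the near-collision pairs $\cP_{i,j}^N(r,L):=\{(n,m):|q_i(n,N)-q_j(m,N)|\le 2r+L\}$. By Assumption \ref{A2}, for each pair $(i,j)$ of non-linear indices either the differences of $q_i,q_j$ explode, in which case Section \ref{Sec2} gives $|\cP_{i,j}^N(r,L)|=o(N^2)$ and the uniform $L^2$-bound on $F_n^N$ kills the contribution, or $q_i,q_j$ are linearly related, in which case the cancellations $Q_{j,k}(c_{i,j}y)=Q_{i,k}(y)$ and $Q_{i,k-1}(y)-Q_{j,k-1}(c_{i,j}y)=r_{i,j}Q'_{j,k}(c_{i,j}y)$ localise $\cP_{i,j}^N$ near the line $m=c_{i,j}n+r_{i,j}$ with the asymptotic density supplied by Section \ref{Sec2}. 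The purely linear indices are controlled by Assumption \ref{A1} as in Chapter 3 of \cite{book}. On a near-collision pair, bivariate stationarity (\ref{xi dist}) reduces $\bbE[F_n^N F_m^N]$ to a function of the finitely many bounded differences $q_i(n,N)-q_j(m,N)$, and summing over $(n,m)$ produces a Riemann sum over $(y,z)\in[0,t]\times[0,s]$ of an integrand built from the limiting laws $\mu_k$ and the tail series $\sum_k(\mu_k-\mu\otimes\mu)$, absolutely summable thanks to $\phi(n)\le dn^{-\te}$ with $\te>2$. The limit of this Riemann sum \emph{is} $b(t,s)$; specialising $t=s=1$ yields $D^2$.

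The hard part will be the near-collision analysis: each pair $(n,m)$ may lie in several $\cP_{i,j}^N$ simultaneously, and the inclusion--exclusion over the set of active pairs $(i,j)$ must be carried out so that the resulting integrand in the Riemann-sum limit is unambiguous. The dichotomy of Assumption \ref{A2} is exactly what makes this bookkeeping tractable: whenever several pairs $(i,j)$ are simultaneously active at $(n,m)$, the corresponding linear relations must be algebraically compatible, and any incompatible configuration is absorbed into the ``exploding differences'' regime of Section \ref{Sec2} and is asymptotically negligible. This is where the technical content of Section \ref{Sec2} --- the precise asymptotics of the sets of approximate coincidences of bivariate polynomials --- enters the argument in an essential way.
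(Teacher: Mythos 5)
Your high-level strategy---truncate, separate well-separated pairs from near-collision pairs, and use Section \ref{Sec2} to control the near collisions---matches the spirit of the paper, but there is a genuine gap in the near-collision analysis that your sketch does not resolve and that the paper's proof handles with two specific devices you don't mention.

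First, your claim that ``bivariate stationarity (\ref{xi dist}) reduces $\bbE[F_n^N F_m^N]$ to a function of the finitely many bounded differences $q_i(n,N)-q_j(m,N)$'' is not correct as stated. The quantity $\bbE[F_n^N F_m^N]$ depends on the joint law of the $2\ell$ random variables $\xi_{q_i(n,N)},\xi_{q_j(m,N)}$, and bivariate stationarity only controls pairs. Even after you decouple the far-apart indices by mixing, the resulting limit depends on \emph{how the remaining indices interleave}, i.e.\ on the relative order of all $2\ell$ points, not just on which gaps are bounded. The paper deals with this by first proving Proposition \ref{Prop Ord 2}: after deleting an $o(N)$ set $B_N$, the interval $[1,N]$ is partitioned into sets $I_\ve(N)$ on each of which the $q_i(n,N)$ are in a fixed order $q_{\ve(1)}<q_{\ve(2)}<\dots<q_{\ve(\ell)}$, with quantified gaps (\ref{Same degree ordering}). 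Without this ordering step you cannot identify the limit of $\bbE[F_n^N F_m^N]$ along near-collision pairs.

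Second, and more crucially, the paper does not attempt the inclusion--exclusion over ``active pairs $(i,j)$'' that you flag as the hard part; it sidesteps it entirely. On each $I_\ve(N)$ the paper decomposes $F_\ve$ telescopically as $F_\ve=\sum_{j=1}^\ell F_{\ve,j}$, where each $F_{\ve,j}$ depends only on $x_{\ve(1)},\dots,x_{\ve(j)}$ and satisfies the mean-zero-in-last-coordinate identity (\ref{F_i's props}). This is what makes Lemma \ref{b lemma} work: the covariance $b_{\ve,\tau,i,j,N}(n,m)$ is then bounded by a function of the \emph{single} gap $|q_{\ve(i)}(n,N)-q_{\tau(j)}(m,N)|$ (not of all $\ell^2$ gaps simultaneously), because the centering in the last coordinate lets you split off $q_{\ve(i)}(n,N)$ alone. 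After this decomposition the existence of $b(t,s)$ reduces to the existence of the $\ell^2\cdot|\cE_\ell'|^2$ limits $D_{\ve,\tau,i,j}(t,s)$, each of which is tractable: either $q_{\ve(i)},q_{\tau(j)}$ have different degrees (limit $0$ by (\ref{Different degree ordering})), or have exploding differences (limit $0$ by the counting from Section \ref{Sec2}), or are linearly related (limit identified via Corollary \ref{Cor linear case}), or are linear (Chapter 3 of \cite{book} plus Assumption \ref{A1}). Your proposed direct analysis of $\bbE[F_n^N F_m^N]$, by contrast, would have to track all pairwise gaps at once, and the sentence ``whenever several pairs $(i,j)$ are simultaneously active at $(n,m)$, the corresponding linear relations must be algebraically compatible'' is an unproven assertion rather than an argument---indeed its resolution is exactly what the telescoping decomposition and ordering make unnecessary. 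To turn your sketch into a proof you would either need to reproduce those two devices or supply an alternative handling of the simultaneous-gap bookkeeping, which you currently do not.
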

Note that in Section \ref{Var Sec} we will also provide several formulas for the limits $b(t,s)$, as well as a some conditions for the positivity of $D^2$ (when $D^2=0$ then $\cS_N$ converges to the  process which equals $0$ identically). We refer the readers to Section \ref{General Cov} to a discussion about existence of $b(t,s)$ (or just $D^2$) for (more general) polynomials $q_i$'s satisfying  certain number theory related conditions.

\begin{remark}\label{Remark equiv}
The property of being linearly related is, in fact, an equivalence relation. Indeed, if both pairs $(q_i,q_j)$ 
and $(q_j,q_l)$ are linearly related then we can always take $c_{i,l}=c_{i,j}\cdot c_{j,l}$ and 
\[
r_{i,l}=r_{j,l}+r_{i,j}\cdot c_{j,l}.
\]
We will say that the polynomials $q_i$ and $q_j$ are $\bbQ$-equivalent if there exist rational $c$ and $r$ so that the 
difference $q_i(n,N)-q_{j}(cn+r,N)$ does not depend on $n$ and $N$. Then any two $\bbQ$-equivalent polynomials are linearly related.
In Corollary \ref{Cor linear case} we will show that any two linearly related polynomials which are not $\bbQ$-equivalent have exploding difference. 
Therefore Assumption \ref{A2} means that any two non-linear  polynomials among the $q_i$'s are either equivalent or have exploding differences, and under Assumption \ref{A2} having exploding differences is a symmetric relation. 
\end{remark}


When the asymptotic covariances $b(t,s)$ exist then, using a functional version of Stein's method due to A.D. Barbour, we derive the following
\begin{theorem}\label{CLTthm}
Suppose that  Assumptions \ref{Moment Ass} and \ref{Mix2} are satisfied with
numbers $w$ and $\te$ so that $\te>\frac{4w}{w-2}$. Assume, in addition,
that the limiting covariances $b(t,s)$ exist. Then, the random functions $\cS_N:[0,1]\to\bbR$ converge in distribution as $N\to\infty$ towards a centered Gaussian process $\eta(t)$ whose covariances are given by 
$
\bbE [\eta(t)\eta(s)]=b(t,s).
$
\end{theorem}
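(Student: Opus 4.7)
The plan is to combine Barbour's functional version of Stein's method for the convergence of finite-dimensional distributions with a fourth-moment tightness bound. The starting observation is that the summands comprising $\cS_N$ have a strong local dependence structure once we approximate each $\xi_m$ by $\xi_{m,r}=\mathbb E[\xi_m\mid \cF_{m-r,m+r}]$ for a suitably chosen $r=r(N)\to\infty$ (e.g., a small power of $\log N$). The smoothness condition (\ref{F Hold}), the moment condition in Assumption \ref{Moment Ass}, and the polynomial decay in Assumption \ref{Mix2} together imply that the $L^2$-error between $\cS_N(t)$ and its $r(N)$-truncated analogue tends to $0$ uniformly in $t\in[0,1]$; hence we may and will work with the truncated sums throughout.

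For the truncated process, the summand indexed by $n$ is measurable with respect to $\bigvee_{i=1}^\ell \cF_{q_i(n,N)-r,\,q_i(n,N)+r}$, so two summands at times $n$ and $m$ are near-independent (in the $\phi$-mixing sense) unless there exist indices $i,j$ with $|q_i(n,N)-q_j(m,N)|\leq 2r$. Combining the divisibility condition in Assumption \ref{A1} for the linear polynomials, the exploding-differences property in Assumption \ref{A2} for non-equivalent non-linear polynomials, and the tight parametrization by $c_{i,j},r_{i,j}$ afforded by linear relatedness for the remaining non-linear pairs, one shows that for each fixed $n$ the set of $m\in[1,N]$ within this dependence range is small (of polylogarithmic order in $N$) outside of a small exceptional set of $n$'s. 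This yields the sparse dependency graph required to apply Stein's method.

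Next, fix $0\leq t_1<t_2<\cdots<t_k\leq 1$ and apply Barbour's multivariate Stein bound to the vector $(\cS_N(t_1),\ldots,\cS_N(t_k))$, written as a sum of coordinate-wise extensions of the (truncated) locally dependent summands. The relevant bound on the Wasserstein distance to a centered Gaussian vector with covariance matrix $(b(t_i,t_j))_{i,j\leq k}$ is expressed as a sum, over the dependency graph, of products of $L^p$-norms of summands. Assumption \ref{Moment Ass} with $w>4$ together with $\te>\frac{4w}{w-2}$ in Assumption \ref{Mix2} is exactly what is needed for this sum to be $O(N^{-a})$ for some $a>0$, so the Stein bound vanishes as $N\to\infty$. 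Since by hypothesis $\mathbb E[\cS_N(t_i)\cS_N(t_j)]\to b(t_i,t_j)$, the finite-dimensional distributions of $\cS_N$ converge to those of a centered Gaussian process with covariance $b(\cdot,\cdot)$.

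Finally, tightness follows from a moment estimate of the form
\[
\mathbb E\bigl|\cS_N(t)-\cS_N(s)\bigr|^4 \leq C\bigl(|t-s|+N^{-1}\bigr)^2,
\]
uniformly in $N$ and $0\leq s\leq t\leq 1$, obtained by expanding the fourth power, estimating the $O(N^2)$ leading terms via H\"older's inequality together with $\phi$-mixing, and using the sparse dependency graph to dismiss the sub-leading terms; this implies tightness via Billingsley's criterion and, combined with the finite-dimensional convergence, gives the claim. The \emph{main obstacle} is the control of the dependency graph in the presence of linearly related (but not necessarily $\bbQ$-equivalent) polynomials: a naive counting argument fails, and one must combine the rigidity afforded by linear relatedness with the polynomial identities developed in Section \ref{Sec2} to ensure that the number of near-collisions $|q_i(n,N)-q_j(m,N)|\leq 2r$ is only polylogarithmic in $N$ rather than of macroscopic order.
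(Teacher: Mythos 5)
Your overall architecture (truncate $\xi_m$ to $\xi_{m,r(N)}$, build a sparse dependency graph, apply a Stein-type bound, deduce tightness from moments) is the right family of ideas, but there are two concrete errors in the calibration that matter.

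First, the truncation radius $r(N)$ cannot be taken polylogarithmic. Assumption \ref{Mix2} is only a \emph{polynomial} decay $\phi(n)+\beta_q(n)^\kappa\leq dn^{-\theta}$, and the $L^2$-error incurred by replacing $\xi_m$ with $\xi_{m,r}$ in $\cS_N(t)=N^{-1/2}\sum_{n\leq[Nt]}F(\ldots)$ is of order $N^{1/2}\beta_q(r)^\kappa\lesssim N^{1/2}r^{-\theta}$. For this to vanish one needs $r\gg N^{1/(2\theta)}$ — a small but positive power of $N$. This is exactly why the paper takes $l(N)=3N^{\zeta_1}+3$, $r(N)=[l(N)/3]$ with $\zeta_1\in\bigl((\theta(1-2/w))^{-1},\tfrac14\bigr)$, a window that is nonempty precisely because $\theta>4w/(w-2)$. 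Consequently the dependency neighborhoods are also of polynomial size $O(N^{\zeta_1})$, not polylogarithmic; the constraint $\zeta_1<1/4$ is what makes the term $d_3$ in Barbour's bound vanish.

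Second, you attribute the difficulty of controlling the number of near-collisions $|q_i(n,N)-q_j(m,N)|\leq 2r$ to Assumptions \ref{A1} and \ref{A2} and to the rigidity of linear relatedness, and you call this the main obstacle. In the CLT step that control is actually elementary and uses neither assumption: since each $q_j(\cdot,N)$ is a polynomial of bounded degree $d^*$, for fixed $n$ and $g$ the equation $q_i(n,N)-q_j(m,N)=g$ has at most $d^*$ integer solutions $m$, so the unit ball around $n$ in the dependency graph has at most $2d^*\ell^2(l(N)+1)+1$ vertices, uniformly in $n$ and $N$. Assumptions \ref{A1}, \ref{A2} and the linear-relatedness machinery of Section \ref{Sec2} are needed for Theorem \ref{VarThm}, the existence of $b(t,s)$ — which Theorem \ref{CLTthm} simply \emph{assumes}. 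So the actual proof of Theorem \ref{CLTthm} does not need those assumptions at all.

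Beyond these two points, your route of proving finite-dimensional convergence via a multivariate Stein bound and then tightness via a fourth-moment Billingsley criterion is a legitimate alternative to the paper's approach, which instead invokes Barbour's functional Stein theorem (Theorem \ref{ThmFunc}) in Skorokhod space and checks its hypotheses directly, including the \emph{second}-moment increment condition (\ref{UnifInt}); Lemma \ref{4 lemma} would support your fourth-moment route, though it is only used in the paper for the SLLN. So the high-level plan is sound, but as stated the polylog truncation would not make the approximation error vanish under polynomial mixing, and the claimed obstacle is not the true bottleneck in this particular theorem.
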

The arguments in the proof of Theorem \ref{CLTthm} together with the arguments in Chapter 1 of \cite{book} show that Stein's method also yields almost optimal convergence rate in the CLT for the sequence of random variables $S_N=\cS_N(1)$, when $D^2>0$. These results are not included here in order not to overload this paper.

\subsection{Outline of the proof of Theorems \ref{LLN}, \ref{VarThm} and \ref{CLTthm}}
Consider first the case when the random variables $\xi_n$ are independent. For any $n$ and $N$ set 
\[
F_{n,N}=F(\xi_{q_1(n,N)},\xi_{q_2(n,N)},...,\xi_{q_\ell(n,N)}).
\]
Fix some $N$. Since the $q_i$'s are polynomials, each $F_{n,N}$ can depend on at most $d_0\ell^2$ of the random variables $F_{m,N},\,m\geq1$, where $d_0$ is the maximal degree of the polynomials $q_i$. We see then that the triangular array $\{F_{n,N},\,n=1,2,...,N\}$ satisfies a ``local dependence" type condition (here we view the indexes $m$ of the random variables $F_{m,N}$ which depend on $F_{n,N}$ as a certain ``neighborhood of dependence"). 
When the sequence $\{\xi_n\}$ is only weakly dependent then we get a certain version of the above local dependence, but now, roughly speaking, the dependence is replaced with a certain type of ``local strong dependence", which essentially means that each $F_{n,N}$ can ``strongly depend" only a number of $F_{m,N}$'s whose magnitude is smaller than $N$. Giving a precise meaning to this ``strong local dependence" is one of the main ideas behind the proof of Theorem \ref{CLTthm} (see the beginning of Section \ref{CLT sec} for the precise definition of the underlying graph in the weakly depend case).
This was done in Chapter 1 of \cite{book} in the case when $q_i$ depends only on $n$ (in fact, the full details were given there only when $q_i(n)=in$ for each $i$), but here the dependence on $N$ causes additional difficulties. 
The resulting local dependence structure is a classical situation where we can use the functional version of Stein's method for Gaussian approximation due to A.D. Barbour (see Theorem \ref{ThmFunc}  and \cite{Barb}).

Using the above strong local dependence structure together with some counting arguments, we get that the $L^4$ norms the sums $N^{-\frac12}(S_N-N\bar F)$ are bounded in $N$. This is the main idea of the proof of Theorem \ref{LLN}.  Indeed, assume for the sake of convenience that $\bar F=0$. Then by the Markov inequality we have
\[
P\big\{N^{-1}|S_N|\geq N^{-1/8}\big\}\leq\frac{\|S_N\|^4_{L^4}}{N^{7/2}}\leq C N^{-3/2}
\]
which together with the Borel-Cantelli lemma yields that with probability one we have
$
\lim_{N\to\infty}\frac1{N}S_N=0=\brF.
$

The proof of Theorem \ref{VarThm} is somehow less transparent even in the case of independent $\xi_n$'s (and it does not rely only on strong local dependence), but let us describe some of  its key ingredients. In the case when the $q_i$'s depend only on $n$, we can order them and just assume that $q_1(n)<...<q_\ell(n)$ for any sufficiently large $n$. When the polynomials $q_i$ depend on $n$ and $N$ then, in general, it is impossible to order them. The first step in the proof of Theorem \ref{VarThm} is to show that, after omitting a ``small" number of $n$'s (in comparison to $N$) between $1$ to $N$, we can order the polynomials $q_{i}(n,N)$ when $n\in\Gamma_i,\,i=1,2,...,d$, where $\{\Gamma_i\}$ is a partition of the remaining $n$'s. After this is established, roughly speaking, our proof scheme requires to study certain combinatorial number theory related problems which are related to asymptotic densities of sets of the form
\[
\big\{aN\leq n\leq bN:\,q(n,N)\in\cup_{m\in\bbN}\{p(m,N)\}\big\}\subset[1,N]
\]
where $a,b$ are positive numbers between $0$ to $1$ and $q$ and $p$ are bivariate polynomials with integer coefficients. 
Such sets were invesigated in \cite{HK3} when the polynomials $q(x,y)$ and $p(x,y)$ depend only on $x$, but when they depend also on $y$ many additional number theoretic difficulties arise.


\section{Differences of bivariate non-linear polynomials}\label{Sec2}

Let $q(n,N)=q_N(n)$ and $P(n,N)=P_N(n)$ be two polynomials in the variables $n,N$ with nonnegative integer coefficients so that $\deg q=\deg p=k>1$ for some $k>1$. We will also assume here that the polynomials $q$ and $p$ do not depend only $N$. In particular the functions $q_N^{-1}:[q_N(0),\infty)\to[0,\infty)$ and $p_N^{-1}:[p_N(0),\infty)\to[0,\infty)$ are well defined.
The goal in this section is to investigate the asymptotic behaviour of the differences $|q_N(m)-p_N(n)|$. In Sections \ref{First est} and \ref{Sec hom decomp} we will prove some general  results, which will be applied in Sections \ref{LR sec} and \ref{frac sec} in more specific situations.


\subsection{First estimate}\label{First est}
Our first results is the following

\begin{proposition}\label{Prop1}
Suppose that $q$ and $p$ have the form 
\begin{equation}\label{N rep}
p(n,N)=H(N)P(n,N)+r(n,N)\,\,\text{ and }q(m,N)=H(N)Q(m,N)+s(m,N)
\end{equation}
for some non-constant polynomial $H$, polynomials $Q$ and $P$ with non-negative integer coefficients and  polynomials $r$ and $s$ so that $\max(\deg s, \deg r)<\deg H$. Then for any $1\leq n,m\leq N$ so that  $p_N(n)>q_N(0)$ and $q_N(m)>p_N(0)$,
either $Q_N(m)=P_{N}(n)$, where $Q_N(x)=Q(x,N)$ and $P_N(x)=P(x,N)$, or 
\begin{equation}\label{Prop 1 main}
\big|q_N(m)-p_N(n)\big|\geq \Del_N(n,m)|H(N)|-|s(m,N)-r(n,N)|,
\end{equation}
where 
$\Del_N(n,m)$ is the minimum of $Q_N'(Q_N^{-1}P_N(n))/Q_N'(m)$ and $P_N'(P_N^{-1}Q_N(m))/P_N'(n)$
and $P_N^{-1}$, and $Q_N^{-1}$ are the inverse functions of the univariate functions $P_N(\cdot)$ and $Q_N(\cdot)$, respectively.
In particular, for any $\del>0$ there exists a constant $R_\del>0$ so that for any sufficiently large $N$ and $\del N\leq n,m\leq N$ so that  $Q_N(m)\not=P_{N}(n)$,
\[
\big|q_N(m)-p_N(n)\big|\geq R_\del|H(N)|.
\]
As a consequence, $q$ and $p$ have exploding differences when $Q$ and $P$ have exploding differences or when 
the degrees of $r$ and $s$ are different.
\end{proposition}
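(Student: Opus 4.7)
The plan is to start from the algebraic decomposition
\[
q_N(m)-p_N(n) \;=\; H(N)\bigl(Q_N(m)-P_N(n)\bigr) + \bigl(s(m,N)-r(n,N)\bigr),
\]
which is immediate from (\ref{N rep}). After the triangle inequality, the main inequality (\ref{Prop 1 main}) reduces to proving $|Q_N(m)-P_N(n)|\geq \Delta_N(n,m)$ whenever $Q_N(m)\neq P_N(n)$; this is the combinatorial heart of the proposition.

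In that case $Q_N(m)$ and $P_N(n)$ are distinct non-negative integers, so $|Q_N(m)-P_N(n)|\geq 1$. To upgrade this integer spacing into the derivative ratios in $\Delta_N$, I would set $m^{*}=Q_N^{-1}(P_N(n))$ and $n^{*}=P_N^{-1}(Q_N(m))$ and exploit that $Q_N,P_N$ have non-negative coefficients, so $Q_N',P_N'$ are positive and non-decreasing on $[0,\infty)$. From the identity $Q_N(m)-P_N(n)=\int_{m^{*}}^{m}Q_N'(t)\,dt$, in the subcase $m>m^{*}$ monotonicity yields the two-sided bound
\[
Q_N'(m^{*})(m-m^{*})\;\leq\; |Q_N(m)-P_N(n)|\;\leq\; Q_N'(m)(m-m^{*});
\]
the upper half combined with $|Q_N(m)-P_N(n)|\geq 1$ forces $m-m^{*}\geq 1/Q_N'(m)$, and feeding this back into the lower half gives $|Q_N(m)-P_N(n)|\geq Q_N'(m^{*})/Q_N'(m)$. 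In this subcase $n<n^{*}$, so the second term $P_N'(n^{*})/P_N'(n)$ in the minimum defining $\Delta_N$ exceeds $1$, and the minimum is attained by the $Q$-term. The complementary subcase $m<m^{*}$ (equivalently $n>n^{*}$) is handled by the symmetric argument with $P_N$ and $n^{*}$, producing $|Q_N(m)-P_N(n)|\geq P_N'(n^{*})/P_N'(n)$, which again matches $\Delta_N(n,m)$ there.

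For the ``In particular'' clause, when $n,m\in[\delta N,N]$ and $Q_N(m)\neq P_N(n)$ I would invoke a dichotomy: either both preimages $m^{*},n^{*}$ lie in an interval comparable in scale to $[\delta N,N]$, in which case the derivative ratios of the polynomials $Q_N,P_N$ on an interval of bounded scale ratio are bounded below by a constant depending only on $\delta$ and on the leading coefficients and degrees of $Q,P$; or one of the preimages is far outside this range, in which case $|Q_N(m)-P_N(n)|$ is already of order a positive power of $N$, dwarfing any fixed $R_\delta$. Finally, for the exploding-differences consequence: if $Q$ and $P$ have exploding differences, then for $n\in[\delta N,N]\setminus\Gamma_{N,\delta}$ and $m\in[\delta N,N]$ the hypothesis $|Q_N(m)-P_N(n)|\geq C_\delta N$ combined with the decomposition gives $|q_N(m)-p_N(n)|\geq C_\delta N|H(N)|-O(N^{\deg H-1})\gtrsim N$ (since $|H(N)|\gtrsim N^{\deg H}$), so $q,p$ inherit the property with the same exceptional sets. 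If instead $\deg r\neq\deg s$, then in the subcase $Q_N(m)=P_N(n)$ one has $|q_N(m)-p_N(n)|=|s(m,N)-r(n,N)|$, which a degree-counting argument on bivariate polynomials of distinct degrees shows is $\gtrsim N$ outside an exceptional set of size $o(N)$, while the subcase $Q_N(m)\neq P_N(n)$ is handled by the ``In particular'' clause. I expect the main obstacle to be controlling the derivative ratios uniformly on the boundary of the non-degenerate regime, since the scale on which $m^{*}$ and $n^{*}$ live depends delicately on the relative degrees of $Q$ and $P$ in the $m$- and $N$-variables.
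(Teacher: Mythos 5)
Your proof of the main inequality (\ref{Prop 1 main}) is correct and follows essentially the same route as the paper: you reduce to bounding $|Q_N(m)-P_N(n)|$ below by the derivative ratio, then combine the integer gap $|Q_N(m)-P_N(n)|\geq 1$ with monotonicity of $Q_N'$ (your two-sided integral bound is just the paper's double application of the mean value theorem, rephrased), while also noting which branch of the minimum in $\Delta_N$ is active. Your sketches of the ``in particular'' clause and the exploding-differences consequence are consistent with the paper, which in fact leaves these two parts unproved and only establishes (\ref{Prop 1 main}) after reducing to $r\equiv s\equiv 0$.
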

The polynomials $Q$ and $P$ have exploding differences when they are linearly related, but not $\bbQ$-equivalent (see Remark \ref{Remark equiv} and Corollary \ref{Cor linear case}). They also have exploding differences  in the circumstances of  Corollary \ref{N cor}. Set
\[
\bar d=\limsup_{N\to\infty}\frac1N\big|\big\{1\leq n\leq N:\,P(n,N)\in\{Q(m,N):\,m\in[1,N]\}\big\}\big|
\]
where $|\Gamma|$ stands for the cardinality of a finite set $\Gamma$. Then, it follows from Proposition \ref{Prop1} that the polynomials $q$ and $p$ have exploding differences also when $\bar d=0$. The upper limit $\bar d$ equals $0$ when $Q$ and $P$ have exploding differences, but also when, for instance, $P(n,N)$ and $Q(m,N)$ take values at disjoint sets (e.g. when $P(n,N)$ is odd and $Q(m,N)$ is even etc.).

\begin{proof}[Proof of Proposition \ref{Prop1}]
It is clearly enough to prove (\ref{Prop 1 main}) in the case when  $r\equiv 0$ and $s\equiv 0$. 
Since $p_N(n)>q_N(0)$ and $q_N(n)>p_N(0)$,
the numbers $t_{n,N}=q_N^{-1}p_N(n)=Q_N^{-1}P_N(n)$ and $s_{m,N}=p_N^{-1}q_N(m)=P_N^{-1}Q_N(m)$ are well defined. Suppose first that $q_N(m)>p_N(n)$. Then $Q_N(m)>P_N(n)$ and 
 we can write $m=t_{n,N}+x$, where here $x\geq0$ is considered as a parameter. 
Define the function $D_{n,N}(y)$ by 
\[
D_{n,N}(y)=Q_{N}(t_{n,N}+y)-P_N(n)=Q_{N}(t_{n,N}+y)-Q_N(t_{n,N}).
\]
Then $D_{n,N}(0)=0$.
Applying the mean value theorem with the function $D_{n,N}$, taking into account that the derivative of $Q_N$ is increasing and that $x\geq0$, we obtain that
\[
|Q_{N}(m)-P_N(n)|=|D_{n,N}(x)-D_{n,N}(0)|\geq Q_{N}'(t_{n,N})|\cdot|x-0|=Q_{N}'(t_{n,N})\cdot|m-t_{n,N}|.
\]
Next, we define the function $g=g_{N,n}(\cdot)$ by
$
g(t)=Q_N(t)-P_N(n).
$
Then $g(t_{n,N})=0$.
By the mean value theorem, there exists $\xi$ between $m$ and $t_{n,N}$ so that
\[
|m-t_{n,N}|\cdot g'(\xi)=|g(m)-g(t_{n,N})|=|g(m)|\geq 1
\]
where we used that that $m\not=t_{n,N}$. Since $0\leq g'(\xi)=Q_N'(\xi)\leq Q_N'(m)$  we
obtain that
\[
|m-t_{n,N}|\geq \frac{1}{Q'_N(m)},
\]
which together with the previous estimates implies that 
\[
|Q_{N}(m)-P_N(n)|\geq\frac{Q_N'(Q_N^{-1}P_N(n))}{Q_N'(m)}.
\]
In the case when $Q_N(m)<P_N(n)$ we obtain (\ref{Prop 1 main}) by
reversing the roles of $Q$ and $P$ and the above arguments.
\end{proof}

 We refer the readers to Corollary \ref{N cor}, in which we give a class of examples of polynomials $Q$ and $P$ with exploding differences. 
 
In the case when $r$ and $s$  are polynomials of the same degree,  we can check whether Proposition \ref{Prop1} can be applied with $s$ and $r$ in place of $q$ and $p$. Still, $r$ and $s$ (or even $p$ and $q$) may, for instance, contain a monomial which does not depend on $N$. In the next section we will estimate $|q_N(m)-p_N(n)|$ under somehow different type of conditions, which will have applications beyond the case considered in Proposition \ref{Prop1}.


\subsection{Estimates using decompositions into homogeneous polynomials}\label{Sec hom decomp}
Set $y_1=\frac{n}{N}$, $y_2=\frac{m}{N}$ and write
\begin{equation}
p_N(n)=\sum_{j=0}^{k}N^j P_j(y_1)\,\,\text{ and }\,\,q_N(m)=\sum_{j=0}^{k}N^j Q_j(y_2)
\end{equation}
where $P_j$ and $Q_j$ are polynomials whose degree does not exceed $j$.  
We will also assume here that $Q_k$ and $P_k$ are not constant polynomials and that $Q_k(0)\leq P_k(0)$.
In the above circumstances, the function $\gam_k(y)=Q_k^{-1}P_k(y)$ is well defined on $[0,\infty)$.

Next, for any $y\in(0,1]$, let the polynomial $H_{N,y}$ be given by
\begin{eqnarray*}
H_{N,y}(\xi)=\sum_{u=2}^{k-1}N^{-(u-1)}C_u(y)+\big(1+\sum_{u=2}^{k}N^{-(u-1)}A_{1,u}(y)\big)\xi\\+
\sum_{s=2}^k\big(\sum_{u=2}^k N^{-(u-1)}A_{s,u}(y)\big)\xi^s.
\end{eqnarray*}
Here $A_{s,u}(y)=\sum_{j=s}^{u}\binom{j}{s}\frac{Q_{k-j+u}^{(j)}(\gam_k(y))}{Q_k'(\gam_k(y))}(R_k(y))^{j-s}$
and
\begin{equation}\label{C u def}
C_u(y)=\frac{Q_{k-u}(\gam_k(y))-P_{k-u}(y)}{Q_k'(\gam_k(y))}
\end{equation}
where
$
R_k(y)=\big(P_{k-1}(y)-Q_{k-1}(\gam_k(y))\big)/Q_k'(\gam_k(y)).
$
Note that when $k=2$ then we set $\sum_{u=2}^{k-1}N^{-(u-1)}C_u(y)=0$.
Since the functions $Q_k'\circ \gam_k, C_u$ and $A_{s,u}$ are bounded on $[0,1]$, 
it is clear that there exists a constant $A_1$ which depend only on  the polynomials $q$ and $p$ so that for any $y\in(0,1]$,
\begin{equation}\label{C a}
\sup_{\xi\in [-1,1]} |H_{N,y}'(\xi)-1|\leq \frac{A_1}{NQ_k'(\gam_k(y))}.
\end{equation}
Therefore, there exist a constant $N_0$ so that if $Q_k'(\gam_k(y))> \frac{A_1}{N}$ and $N>N_0$ is sufficiently large then the function $H_{N,y}$ is strictly increasing on $[-1,1]$ and 
there exists a unique root $x_N(y)$ of $H_{N,y}$ in $[-1,1]$, which, by the mean value theorem, 
satisfies that $H_{N,y}(0)=A_N(y)x_N(y)$, for some function $A_N(y)$ so  that 
\begin{equation}\label{A function}
|A_N(y)-1|\leq\frac{A_1}{NQ_k'(\gam_k(y))}.
\end{equation}
Observe that $H_{N,y}(0)$ is at most of order $N^{-1}$. When all of the functions $C_2,...,C_{k-1}$ are identically $0$ then $H_{N,y}(0)=x_N(y)=0$. In general, we have the following

\begin{lemma}\label{lemma}
Suppose that not all the $C_u$'s are identically zero.
Let  $s_0\leq k-1$ denote the first index $u$ so that the function $C_u(\cdot)$ does not equal $0$ identically.
Then for any $\del\in(0,1)$ there exist positive constants $B_1(\del)$ and $B_2(\del)$ and a set $\Gam_{N,\del}\subset[1,N]$, whose cardinality does not exceed $\del N$, so that for any sufficiently large $N$ and $n\in[\del N,N]\setminus\Gam_{N,\del}$,
\[
B_1(\del) N^{-(s_0-1)}\leq |H_{N,n/N}(0)|\leq B_2(\del) N^{-(s_0-1)}.
\]
\end{lemma}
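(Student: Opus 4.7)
The plan is to factor the leading power of $1/N$ out of $H_{N,y}(0)$ and reduce the estimate to an analysis of the single function $C_{s_0}$. By the choice of $s_0$,
\[
H_{N,y}(0)=N^{-(s_0-1)}\Big(C_{s_0}(y)+\sum_{u=s_0+1}^{k-1}N^{-(u-s_0)}C_u(y)\Big),
\]
and we want to show that the parenthesized expression is bounded both above and below by positive constants, uniformly in $y=n/N\in[\del,1]$ outside a small exceptional set of $n$'s. The key preparatory observation is that $P_k$ and $Q_k$ are non-constant polynomials with non-negative integer coefficients, so $P_k$ is strictly increasing on $[0,\infty)$ and $\gam_k(y)=Q_k^{-1}P_k(y)$ satisfies $\gam_k(y)>0$ for $y>0$; since $Q_k'$ has non-negative coefficients with at least one positive, $Q_k'(\gam_k(y))$ is bounded away from $0$ on $[\del,1]$, and hence each $C_u$ is a bounded continuous function on $[\del,1]$. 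In particular the tail sum above is uniformly $O(N^{-1})$ on $[\del,1]$.

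The upper bound is then immediate: $|C_{s_0}|$ has a finite maximum on $[\del,1]$, so after absorbing the $O(N^{-1})$ error we obtain $|H_{N,y}(0)|\leq B_2(\del)N^{-(s_0-1)}$ for all sufficiently large $N$ and all $y\in[\del,1]$, with no exceptional set required.

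For the lower bound I would exploit real analyticity. The same non-vanishing argument, combined with the inverse function theorem, shows that $\gam_k=Q_k^{-1}\circ P_k$ is real analytic on $(0,\infty)$, and hence so is $C_{s_0}$. Since $C_{s_0}\not\equiv 0$ by the definition of $s_0$, the identity theorem for real analytic functions forces $C_{s_0}$ to have only finitely many zeros $y_1,\dots,y_r$ in $[\del,1]$. Choose $\eta=\eta(\del)>0$ small enough that $2r\eta<\del/2$ and set
\[
\Gam_{N,\del}=\{n\in[1,N]:\,|n/N-y_j|<\eta\text{ for some }1\leq j\leq r\}.
\]
Then $|\Gam_{N,\del}|\leq\del N$ for all sufficiently large $N$, and for $n\in[\del N,N]\setminus\Gam_{N,\del}$ the point $y=n/N$ lies in the compact set $K_\del=[\del,1]\setminus\bigcup_j(y_j-\eta,y_j+\eta)$, on which $|C_{s_0}|$ attains a strictly positive minimum $m_\del$. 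Together with the tail estimate this yields $|H_{N,y}(0)|\geq \tfrac12 m_\del\, N^{-(s_0-1)}$ once $N$ is large enough to push the tail below $\tfrac12 m_\del$, giving the claimed lower bound with $B_1(\del)=\tfrac12 m_\del$.

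The only delicate step is establishing the finiteness of the zero set of $C_{s_0}$ in $[\del,1]$, since without real analyticity this set could in principle have positive measure and no finite cover by short intervals would suffice. The whole argument therefore hinges on verifying that $Q_k'(\gam_k(y))\neq 0$ for $y>0$, which is a direct consequence of the structural assumption that $Q_k$ is non-constant with non-negative integer coefficients.
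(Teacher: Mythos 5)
Your proof is correct and follows essentially the same route as the paper: both arguments use the analyticity of $C_{s_0}$ to deduce that it has only finitely many zeros, remove short intervals of total length $O(\del N)$ around those zeros to form $\Gam_{N,\del}$, and absorb the uniformly $O(N^{-1})$ tail $\sum_{u>s_0}N^{-(u-s_0)}C_u(y)$ into the constants. You are somewhat more careful than the paper in restricting the analyticity and boundedness of the $C_u$'s to $[\del,1]$ rather than $[0,1]$ (avoiding a possible degeneracy of $Q_k'(\gam_k(y))$ at $y=0$), which is a harmless refinement of the same argument.
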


\begin{proof}
The function $C_{s_0}$ can have only a finite number of roots $y_1,...,y_t$ in the interval $[0,1]$ (since $C_{s_0}$ can be extended to an analytic function in a complex neighborhood of $[0,1]$). Let us denote these roots by $y_1,...,y_t$. Let $\del>0$. Then there exists a constant $C_\del>0$ so that for any $y\in[0,1]$ which satisfy that
\begin{equation}\label{y i dist}
\min_{1\leq i\leq t}|y-y_i|\geq\frac{\del}{2t}:=\del_t
\end{equation}
we have
$
|C_{s_0}(y)|\geq C_\del.
$
Let $B$ be an upper bound of the absolute value of the functions $C_u:[0,1]\to\bbR$. Set 
\[
\Gamma_{N,\del}=\bigcup_{i=1}^{t}[N(y_i-\del_t),N(y_i+\del_t)].
\]
If $y:=n/N\in[\del,1]\setminus\Gamma_{N,\del}$, then, with $s=s_0-1$
\begin{equation*}
(C_\del-BkN^{-1})N^{-s}\leq |H_{N,y}(0)|=\Big|\sum_{u=s_0}^{k}N^{-(u-1)}C_u(y)\Big|\\\leq kBN^{-s}
\end{equation*}
and the proof of the lemma is complete.
\end{proof}

Our next result is the following 

\begin{lemma}\label{Lemma}
For any natural $n,m$ and $N$, 
with $y=\frac{n}{N}$, we have
\[
q_N(m)-p_N(n)=q_0(R_k(y))-P_0+Q_k'(\gam_k(y))N^{k-1}H_{N,y}(m-N\gam_k(y)-R_k(y)).
\]
As a consequence, if $Q_k'(\gam_k(y))> \frac{2A_1}{N}$ and $N$ is sufficiently large, where $A_1$ comes from (\ref{C a}), then, with $\al_N(y)=N\gam_k(y)+R_k(y)+x_N(y)$,
\begin{eqnarray*}
\frac12Q_k'(\gam_k(y))N^{k-1}|m-\al_N(y)|\\\leq \left|q_N(m)-p_N(n)-\big(Q_0(R_k(y))-P_0\big)\right|\leq  2Q_k'(\gam_k(y))N^{k-1}|m-\al_N(y)|.
\end{eqnarray*}
In particular, for any constants $s<k$ and $0<B_1(y)<B_2(y)<\infty$ so that 
\[
B_1(y) N^{-s}\leq |H_{N,y}(0)|\leq B_2(y) N^{-s}
\] 
there exists a constant $K(y)$ so that
\[
|q_N(m)-p_N(n)|\geq K(y) N^{k-s}-|q_0(R_k(y))-P_0|
\]
if 
\begin{equation}\label{Lower bound}
\big|m-N\gam_k\big(\frac{n}{N}\big)-R_k\big(\frac{n}{N}\big)\big|\geq KN^{-(s-1)}
\end{equation}
for some $K>0$. The constant $K(y)$ depends only on $B_1(y),B_2(y),K$ and $s$. 
\end{lemma}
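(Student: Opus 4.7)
The plan is to establish the identity by an exact Taylor expansion of each $Q_u(m/N)$ around $\gam_k(y)$, then deduce the two-sided bound from the near-linearity of $H_{N,y}$ on $[-1,1]$ already encoded in (\ref{C a}), and finally derive the lower bound by comparing the magnitudes of $\xi$ and $x_N(y)$.

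For the identity, I would set $\zeta=m/N-\gam_k(y)$, so that $N\zeta=\xi+R_k(y)$ with $\xi=m-N\gam_k(y)-R_k(y)$. Since $\deg Q_u\leq u\leq k$, Taylor's formula is exact:
\[
N^u Q_u(m/N)=\sum_{j=0}^{u}\frac{Q_u^{(j)}(\gam_k(y))}{j!}\,N^{u-j}(\xi+R_k(y))^{j}.
\]
Summing over $u$ and subtracting $\sum_u N^u P_u(y)$, two cancellations occur by design: the $N^k$ constant-in-$\xi$ piece vanishes via $Q_k(\gam_k(y))=P_k(y)$, and the $N^{k-1}$ constant-in-$\xi$ piece vanishes via the definition of $R_k(y)$. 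Expanding $(\xi+R_k(y))^j$ by the binomial theorem and reindexing by $s$ (the $\xi$-power) and $u$ (the negative power of $N$ after factoring out $Q_k'(\gam_k(y))N^{k-1}$), one checks that the coefficient of $\xi^s N^{-(u-1)}$ in the resulting expression matches $A_{s,u}(y)$ for $s\geq 1$ and $C_u(y)$ for $s=0$ with $2\leq u\leq k-1$, reassembling exactly $H_{N,y}(\xi)$. The leftover $s=0$, $u=k$ contribution is the constant $Q_0(R_k(y))-P_0$ placed outside the $H_{N,y}$ term, which yields the stated identity.

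For the two-sided bound, the assumption $Q_k'(\gam_k(y))>2A_1/N$ together with (\ref{C a}) gives $H_{N,y}'(\xi)\in[1/2,3/2]$ on $[-1,1]$. Since $H_{N,y}(x_N(y))=0$, the mean value theorem yields $|H_{N,y}(\xi)|\in[\tfrac12|\xi-x_N(y)|,\,2|\xi-x_N(y)|]$ there; multiplying by $Q_k'(\gam_k(y))N^{k-1}$, using the identity, and rewriting $\xi-x_N(y)=m-\al_N(y)$ (by the very definition of $\al_N(y)$) produces the displayed inequality. For the final assertion, the hypothesis $B_1(y)N^{-s}\leq |H_{N,y}(0)|\leq B_2(y)N^{-s}$ combined with $H_{N,y}(0)=A_N(y)x_N(y)$ and (\ref{A function}) gives $|x_N(y)|\asymp N^{-s}$; since $s<k$ gives $N^{-(s-1)}\gg N^{-s}$, the hypothesis (\ref{Lower bound}) forces $|\xi|\geq KN^{-(s-1)}\gg|x_N(y)|$ for $N$ large, so $|\xi-x_N(y)|\geq|\xi|/2\geq (K/2)N^{-(s-1)}$. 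Plugging into the two-sided bound produces $|q_N(m)-p_N(n)-(Q_0(R_k(y))-P_0)|\geq (K/4)Q_k'(\gam_k(y))N^{k-s}$, and the reverse triangle inequality delivers the claim with $K(y)=KQ_k'(\gam_k(y))/4$.

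The main obstacle is the coefficient bookkeeping in the identity: verifying that after the two highlighted cancellations, the triple sum reorganizes cleanly into the expressions $C_u(y)$ and $A_{s,u}(y)$ defined just before the lemma. This is mechanical (binomial expansion plus the relabeling of $u-j$ as the power of $N^{-1}$), but must be carried out carefully because the same Taylor coefficients $Q_v^{(j)}(\gam_k(y))$, with different $v$, contribute to the same monomial $\xi^s N^{-(u-1)}$; once this is done correctly, the two-sided bound and the final lower bound follow routinely from monotonicity of $H_{N,y}$ and the mean value theorem.
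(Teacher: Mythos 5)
Your proof is correct and follows essentially the same route as the paper: for the identity, the paper likewise writes $m=N\gam_k(y)+x$, Taylor-expands each $Q_u$ around $\gam_k(y)$, cancels the $N^k$ and $N^{k-1}$ constant pieces via $Q_k(\gam_k(y))=P_k(y)$ and the definition of $R_k(y)$, and then re-expands in $x$ around $R_k(y)$ to reassemble $H_{N,y}$ — your one-step binomial expansion of $(\xi+R_k(y))^j$ is just this same manipulation done in a single pass. The paper states the two consequences without proof, and your mean-value/root argument via (\ref{C a}), (\ref{A function}) and $H_{N,y}(0)=A_N(y)x_N(y)$ is the natural and correct way to supply them (with the implicit understanding, which you flag, that the derivative estimate and hence the two-sided bound are being used on the interval $[-1,1]$ where $x_N(y)$ lives, which is the range relevant in the paper's applications).
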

Let $\del\in(0,1)$. When $y=n/N\in[\del,1]$ then $Q_k'(\gam_k(y))\geq C_\del$ for some $C_\del>0$ which depends only on $\del$, and so the magnitude of  $Q_k'(\gam_k(y))N^{k-1}$ is $N^{k-1}$ and the inequality  $Q_k'(\gam_k(y))> \frac{2A_1}{N}$ holds true, assuming that $N$ is sufficiently large. Observe also that for such $n$'s we have 
\[
\max_{y\in[C_\del,1]}|q_0(R_k(y))-P_0|=D_\del<\infty
\]
and that $D_\del$ depends only on $\del$ (and on the polynomials $q$ and $p$). Therefore, when $n\in[\del N,N]\setminus\Gam_{N,\del}$, where $\Gam_{N,\del}$ comes from Lemma \ref{lemma} with $s_0=s+1$, then 
\[
|q_N(m)-p_N(n)|\geq K_\del N^{k-s}
\] 
for any $n$ and $m$  satisfying (\ref{Lower bound}), for some constant $K_\del>0$ (and so, the problem of verifying that $q$ and $p$ have exploding differences is reduced to the study of (\ref{Lower bound})-see Corollary \ref{fractional corollary} for an application).

\begin{proof}[Proof of Lemma \ref{Lemma}]
Write $y=\frac{n}N$ and $m=N\gam_k(y)+x$, where $x$ is considered here as a parameter. Then, by considering the Taylor expansion of the polynomials $Q_j$ around the point $\gam_k(y)$ we have
\begin{eqnarray*}
q_N(m)-p_N(n)=\sum_{j=0}^{k}N^j\Big(\sum_{s=0}^j\frac{Q_j^{(s)}(\gam_k(y))}{s!}\big(\frac{x}{N}\big)^s-P_j(y)\Big)\\=\sum_{u=0}^{k}N^{k-u}\Big(\sum_{s=0}^u\frac{Q_{s+k-u}^{(s)}(\gam_k(y))x^s}{s!}-P_{k-u}(y)\Big)
\\=N^{k-1}Q_k'(\gam_k(y))(x-R_k(y))+\sum_{u=2}^{k}N^{k-u}\Big(\sum_{s=0}^u\frac{Q_{s+k-u}^{(s)}(\gam_k(y))x^s}{s!}-P_{k-u}(y)\Big)
\end{eqnarray*}
where we used that $Q_k(\gam_k(y))=P_k(y)$.
By considering the above expression as a (polynomial) function of $x$ (where $y$ is considered as a parameter), and then considering its Taylor polynomials around  the point $R_k(y)$ we arrive at
\begin{eqnarray*}
q_N(m)-p_N(n)=Q_k'(\gam_k(y))N^{k-1}H_{N,y}(x-R_{k}(y))+\sum_{s=0}^{k}q_{s,s}\big(R_k(y)\big)^{s}-P_0\\=
Q_k'(\gam_k(y))N^{k-1}H_{N,y}(x-R_{k}(y))+q_0(R_k(y))-P_0
\end{eqnarray*}
where $H_N$ was defined in the statement of the lemma  $q_{s,s}$ is the coefficient of monomial $y^s$ in the polynomial $Q_s(y)$.
\end{proof}

In the following sections we will apply Lemma \ref{Lemma} in several situations, where $\gam_k$ and $R_k$ are assumed to have certain  structure. In Section \ref{Sec Var proof} we will use the results from Sections \ref{LR sec} and \ref{frac sec} in order to prove Theorem \ref{VarThm}. For some abstract application of Lemma \ref{Lemma}, we refer the readers to Section \ref{General Cov}.

\subsection{Application I: linearly related polynomials}\label{LR sec}
We begin with the following consequence of Lemma \ref{Lemma}. 

\begin{corollary}\label{Cor linear case}
Suppose  that $\gam_k(y)=cy$, $R_k(y)=r$ for some constants $c$ and $r$, namely that  $q$ and $p$ are linearly related.
Then either $q$ and $p$ have exploding differences, or $q$ and $p$ are $\bbQ$-equivalent (in the terminology of Remark \ref{Remark equiv})  and then for any $\del\in(0,1)$ there exist constants $W_\del$  and $N_\del$ so that for any $N>N_\del$,  $\del N\leq n\leq N$ and $m\in\bbN$ either $m=cn+r$  (which happens on a finite union of arithmetic progressions) and $q_N(m)-p_N(n)=q_0(r)-P_0:=d$ or $|q_N(m)-p_N(n)|\geq W_\del N^{k-1}$.
\end{corollary}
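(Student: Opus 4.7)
The plan is to specialize Lemma \ref{Lemma} to $\gam_k(y)=cy$ and $R_k(y)=r$, yielding the identity
\[
q_N(m)-p_N(n)=d+Q_k'(cy)N^{k-1}H_{N,y}(m-cn-r),
\]
with $y=n/N$ and $d:=q_0(r)-P_0$, and then to split on whether the ``diagonal'' quantity $\Phi_N(n):=q_N(cn+r)-p_N(n)-d = Q_k'(cy)N^{k-1}H_{N,y}(0)$ vanishes identically in $n$ and $N$. Throughout I use that $Q_k$ has non-negative integer coefficients and is non-constant, so $Q_k'(cy)\geq c_\del>0$ uniformly for $y\in[\del,1]$.

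\textbf{Case A: $\Phi_N\equiv 0$.} Then $q_N(cn+r)-p_N(n)=d$ for all $n,N$. If $c\in\bbQ$, evaluating $r=[P_{k-1}(y)-Q_{k-1}(cy)]/Q_k'(cy)$ at any positive rational $y$ shows $r\in\bbQ$, so $(c,r)$ witnesses the $\bbQ$-equivalence. The set $\{n:cn+r\in\bbZ\}$ is the solution set of a linear congruence, hence a union of arithmetic progressions, and for integer $m\neq cn+r$ one has $|m-cn-r|\geq 1/D$ with $D$ a common denominator, so Lemma \ref{Lemma}'s bilateral estimate yields $|q_N(m)-p_N(n)-d|\geq W_\del N^{k-1}$ for $\del N\leq n\leq N$. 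If $c\notin\bbQ$, then uniqueness of the positive $c$ satisfying $Q_k(cy)=P_k(y)$ forbids $\bbQ$-equivalence and we must show exploding differences: Weyl equidistribution gives $|\{n\leq N:\|cn+r\|<\del/4\}|\leq\del N$ for large $N$, and on the complement every integer $m$ satisfies $|m-cn-r|\geq\del/4$; since $x_N(y)=0$ and $\al_N(y)=cn+r$, Lemma \ref{Lemma} gives $|q_N(m)-p_N(n)-d|\geq C_\del N^{k-1}\geq C_\del N$.

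\textbf{Case B: $\Phi_N\not\equiv 0$.} Then some $C_u\not\equiv 0$ and Lemma \ref{lemma} applies with some $s_0\leq k-1$, providing $\Gam_{N,\del}$ with $|\Gam_{N,\del}|\leq\del N$ off which $|H_{N,y}(0)|\asymp N^{-(s_0-1)}$ and therefore $|x_N(y)|\asymp N^{-(s_0-1)}$. For any integer $m$ with $|m-\al_N(y)|\geq 1/2$ the bilateral estimate in Lemma \ref{Lemma} immediately gives $|q_N(m)-p_N(n)-d|\geq C_\del N^{k-1}$. At most one integer $m_0$ lies within $1/2$ of $\al_N(y)$, and for it I use Diophantine information on $\{cn+r\}\pmod 1$: when $c\in\bbQ$ the sequence is periodic and either $\{cn+r\}=0$ (on an arithmetic progression of $n$, giving $|m_0-\al_N(y)|=|x_N(y)|\asymp N^{-(s_0-1)}$) or $\{cn+r\}\geq 1/D$ (giving $|m_0-\al_N(y)|\geq 1/(2D)$); when $c\notin\bbQ$ a quantitative Weyl bound yields $|\{n\leq N:\|cn+r\|<2|x_N(y)|\}|=O(N^{2-s_0})=O(1)$, so enlarging $\Gam_{N,\del}$ by this finite set preserves $|\Gam_{N,\del}|\leq\del N$ and ensures $|m_0-\al_N(y)|\gtrsim N^{-(s_0-1)}$ on the complement. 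In every surviving sub-case Lemma \ref{Lemma} gives $|q_N(m_0)-p_N(n)-d|\geq C_\del N^{k-s_0}\geq C_\del N$, since $s_0\leq k-1$.

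The main obstacle is the treatment of the integer $m_0$ closest to $\al_N(y)$ in Case B: one must couple the magnitude $|x_N(y)|\asymp N^{-(s_0-1)}$ from Lemma \ref{lemma} with the Diophantine behaviour of $\{cn+r\}\pmod 1$ (periodicity when $c\in\bbQ$, an Erd\H{o}s--Tur\'an-style equidistribution bound when $c\notin\bbQ$) to guarantee $|m_0-\al_N(y)|\gtrsim N^{-(s_0-1)}$ on a set of $n$'s of density at least $1-\del$. The $N^{k-1}$ amplification in Lemma \ref{Lemma} then converts this into growth of order $N^{k-s_0}$, which is at least $N$ exactly because $s_0\leq k-1$; absorbing $|d|$ for $N$ large completes the argument.
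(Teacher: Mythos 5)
Your proposal uses the same two key lemmas as the paper (the bilateral estimate in Lemma \ref{Lemma} and the sharp asymptotics of $H_{N,y}(0)$ in Lemma \ref{lemma}, together with Weyl equidistribution), and the organization is very close; you simply split first on $\Phi_N\equiv 0$ versus $\Phi_N\not\equiv 0$ and then on $c\in\bbQ$ or not, whereas the paper splits first on $c$ (irrational / rational with $r$ irrational / rational with $r$ rational) and then on whether all $C_u\equiv 0$. Your remark that $c\in\bbQ$ forces $r\in\bbQ$ (by evaluating the constant expression for $R_k$ at a positive rational $y$) is correct and makes the paper's ``$c$ rational, $r$ irrational'' case vacuous, which is a small tidying-up the paper does not perform.

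The one genuine problem is in Case~B with $c\notin\bbQ$. You assert that a quantitative Weyl bound gives $\big|\{n\leq N:\,\|cn+r\|<2|x_N(y)|\}\big|=O(N^{2-s_0})=O(1)$. This counting claim is not justified and is false without hypotheses on the irrationality measure of $c$: for $\ve_N\asymp N^{-(s_0-1)}$, the ``expected'' count $2\ve_N N\asymp N^{2-s_0}$ is dominated by the discrepancy error, which for a badly approximable $c$ is $O(\log N)$ (not $O(1)$) and for a Liouville-type $c$ can be of polynomial size along subsequences; your argument as written would therefore not survive. Fortunately this refinement is unnecessary. When $c\notin\bbQ$ the proof does not need $\Phi_N$ at all: exactly as in your Case~A, Weyl equidistribution gives a set of density $\leq\del$ outside of which $\|cn+r\|\geq\del/4$, hence for $N$ large $|m-\al_N(y)|\geq\del/8$ for every integer $m$, and Lemma \ref{Lemma} then yields $|q_N(m)-p_N(n)|\gtrsim N^{k-1}\geq N$. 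This is precisely the paper's route, and once you replace the problematic count with this coarser Weyl bound your proof becomes correct. Everything else, including the treatment of the closest integer $m_0$ when $c\in\bbQ$ via the alternative $\{cn+r\}=0$ versus $\|cn+r\|\geq 1/D$, agrees with the paper's argument.
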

\begin{proof}
For each $n$ and $N$ set $d_N(n)=\inf\{|q_N(m)-p_N(n)|:\, m\in\bbN\}$.
 Let $\del>0$. Suppose first that $c$ is irrational. Then, using Weyl's equidistribution theorem, we see that
\[
\limsup\frac1{N}\left|[1,N]\cap B(c,r,\del)\right|\leq 2\del
\] 
where $B(c,r,\del)$ is the set of all natural numbers $n$ so that $|m-cn-r|<\del$ for some integer $m$, and $|\Gam|$ stands for the cardinality of a finite set $\Gam$. Set 
\[
C_\del=Q_k'(\gam_k(\del))=\min_{y\in[\del,1]} Q_k'(\gam_k(y))>0
\]
and
\[
D=|q_0(r)-P_0|=|d|.
\]
Note that there exists a constant $B_\del>0$ so that for any sufficiently large $N$ and 
$n\in[\del N,N]$ we have 
\[
|x_N(n/N)|\leq B_\del N^{-1}
\]
where $X_N(n/N)$ was defined in Section \ref{Sec hom decomp}. 
Relying now on Lemma \ref{Lemma}, we obtain that for any $n\in[\del N,N]\cap B(c,r,\del)$ and  $m\in\bbN$ we have
\[
|q_N(m)-p_N(n)|
\geq \frac12 C_\del N^{k-1}(\del-B_\del N^{-1})-D.
\]
Therefore, for any sufficiently large $N$ we have
\[
|\{n\in[1,N]:\,d_{N}(n)\geq A_{\del} N^{k-1}\}|\geq (1-3\del)N
\]
where $A_\del$ is a constant which depends only on $\del$, and thus the difference of $q$ and $p$ explode.
Next, suppose that $c=u/v$ is rational and that $r$ is irrational. Then for any $n$ and $m$ we have
\[
|m-cn-r|=|v|^{-1}\cdot|rv-(mv-un)|\geq|v|^{-1}\inf_{l\in\bbN}|rv-l|:=\del_0>0,
\] 
which as in the previous case is enough in order to derive that the differences of $q$ and $p$ explode. Note that when $c$ and $r$ are not both rational then the polynomials $q$ and $p$ are not $\bbQ$-equivalent.

Next, suppose that $c=u/v$ and $t=w/t$ are rational and that $C_u$ does not equal identically $0$ for some $2\leq u\leq k-1$. Then $q$ and $p$ are not $\bbQ$-equivalent. Since $c$ and $r$ are rational,  either $m=cn+r$ or 
$
|m-cn-r|=|tv|^{-1}|mtv-utn-vw|\geq |tv|^{-1}:=\del_1>0.
$  
In the case when $|m-cn-r|\geq\del_1$ and $n\in[N\del, N]$, as in the first part of this proof, we have 
\[
|q_N(m)-p_N(n)|\geq \frac12 C_\del N^{k-1}(\del_1-B_\del N^{-1})-D
\]
Assume now that $m=cn+r$ (i.e. $m=N\gam_k(n/N)+R_k(n/N)$) and $n\in[N\del, N]$. Let $s+1<k$ and $\Gam_{N,\del}$ be as in Lemma \ref{lemma}. Then 
for any $n\in\Gam_{N,\del}$ we have
\[
|q_N(m)-p_N(n)|\geq \frac12 B_\del N^{k-1-s}-D.
\]
for some constant $A_\del>0$, which completes the proof that the differences of $p$ and $q$ explode in the case considered above.

Finally, suppose that $c=u/v$ and $t=w/t$ are rational and that $C_u\equiv 0$ for any $u$ (i.e. that $p$ and $q$ and $\bbQ$-equivalent).
Then $H_{N,y}(0)=\sum_{u=2}^{k-1}N^{-(u-1)}C_u(y)=0$ for any $y$.
As in the previous cases covered in the this proof, when $m\not=cn+r$ then $|m-cn-r|\geq \del_1>0$ and so, by Lemma \ref{Lemma}, if
$Q_k'(\gam_k(cn/N))>2A_1N^{-1}$ we have
\[
|q_N(m)-p_N(n))|\geq \frac12 Q_k'(cn/N) N^{k-1}(\del_1-BN^{-1})-D
\]
for some constant $B>0$. 
Here $D=|q_0(r)-P_0|=|d|$ and  we also used (\ref{A function}). Note that $Q_k'(\gam_k(y))\geq C_\del>0$ when 
$y=n/N\in[\del,1]$. 
On the other hand, if $m=cn+r$ then 
$m=N\gam_k(y)+R_k(y)$ and so
\begin{eqnarray*}
q_N(m)-p_N(n)=d+Q_k'(\gam_k(y))N^{k-1}H_{N,y}(m-N\gam_k(y)-R_k(y))\\=
q_0(r)-P_0+Q_k'(\gam_k(y))N^{k-1}H_N(0)=q_0(r)-P_0.
\end{eqnarray*}
\end{proof}

\subsection{Application II: ``fractionaly related" polynomials}\label{frac sec}
In this section we will give several classes of examples for polynomials with exploding differences so that $\gam_k(y)$ is not a linear function of $y$. We will rely on the following

\begin{lemma}\label{Number lemma}
Let $a$ and $b$ be positive  coprime integers so that $a>b$ and  $\al_b, \alpha_{b+1},...,\al_{b}$ be integers so that $|\al_b|=1$. For each fixed $N$, consider the equation
\begin{equation}\label{SpecEq}
m^a=\sum_{j=b}^a\al_j n^j N^{a-j}
\end{equation}
over the positive integers. Then for any  $\del>0$ there exists a constant $N_\del$ and a set
$\Gam_{N,\del}\subset[1,N]$ whose cardinality does not exceed $\del N$ so that  for any $N>N_\del$ and $n\in[1,N]\setminus\Gam_{N,\del}$ 
there is no natural number $m$ such that (\ref{SpecEq}) holds true.
\end{lemma}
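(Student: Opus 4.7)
My plan is to translate the Diophantine equation (\ref{SpecEq}) into a divisibility constraint on $n$ via $p$-adic valuations, and then to show that only a sparse set of $n\in[1,N]$ can satisfy this constraint. First I would factor out $n^b$ from the right-hand side to rewrite the equation as
\[
m^a = n^b\,h(n,N), \qquad\text{where}\ \ h(n,N) := \sum_{j=b}^{a}\al_j n^{j-b}N^{a-j}.
\]
Since every term with $j>b$ is divisible by $n$, we have $h(n,N)\equiv \al_b N^{a-b}\pmod{p}$ for any prime $p\mid n$. Hence if $p\mid n$ but $p\nmid N$, then $h(n,N)$ is a unit modulo $p$ (this is where $|\al_b|=1$ enters), and comparing $p$-adic valuations in $m^a=n^b h(n,N)$ yields $a\,v_p(m)=b\,v_p(n)$. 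Combined with $\gcd(a,b)=1$, this forces $a\mid v_p(n)$.

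Now write $n=n_1 n_2$, where $n_1$ is the largest divisor of $n$ all of whose prime factors divide $N$, and $n_2=n/n_1$ is coprime to $N$. The previous step shows that existence of a solution $m$ forces $v_p(n_2)$ to be a multiple of $a$ for every prime $p\mid n_2$; equivalently, $n_2=k^a$ for some $k\in\bbN$ with $\gcd(k,N)=1$. I would then count such $n$: each corresponds to a unique pair $(n_1,k)$ with $n_1$ an $N$-smooth positive integer, $\gcd(k,N)=1$, and $n_1 k^a\leq N$. Hence
\[
\#\big\{n\in[1,N]:\ n_2\text{ is a perfect }a\text{-th power}\big\} \;\leq\; \sum_{\substack{n_1\leq N\\ n_1\mid N^\infty}} (N/n_1)^{1/a} \;\leq\; N^{1/a}\prod_{p\mid N}(1-p^{-1/a})^{-1}.
\]

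To bound the Euler product, I would use $\omega(N)=O(\log N/\log\log N)$ together with the fact that the primes dividing $N$ are at least as large as the first $\omega(N)$ primes. A standard estimate (using $-\log(1-x)\leq 2x$ for $x\leq 2^{-1/a}$ together with the prime number theorem) gives
\[
\log\prod_{p\mid N}(1-p^{-1/a})^{-1} \;\leq\; 2\sum_{p\mid N} p^{-1/a} \;=\; O\big((\log N)^{1-1/a}\big) \;=\; o(\log N),
\]
so the product is $N^{o(1)}$ uniformly in $N$. The count is therefore $O(N^{1/a+o(1)})$, which is $o(N)$ since $a\geq 2$. Taking $\Gam_{N,\del}$ to be this exceptional set of $n$'s then gives $|\Gam_{N,\del}|\leq\del N$ for all $N\geq N_\del$, completing the proof. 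The conceptually crucial step—where both hypotheses $\gcd(a,b)=1$ and $|\al_b|=1$ enter essentially—is the $p$-adic reduction to ``$n_2$ is an $a$-th power''; the rest is routine analytic number theory, the only mild technical point being the uniform-in-$N$ control of the Euler product.
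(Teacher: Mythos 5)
Your proof is correct, and it takes a genuinely different route from the paper's, even though both rest on the same arithmetic kernel — comparing $p$-adic valuations in $m^a=n^b h(n,N)$ and using $|\al_b|=1$ together with $\gcd(a,b)=1$ to force an $a$-th-power structure. The differences are in the decomposition and in the counting. You decompose $n=n_1n_2$ into its $N$-smooth part $n_1$ and its $N$-coprime part $n_2$, show $n_2$ must be an $a$-th power, and then count via the Euler product $N^{1/a}\prod_{p\mid N}(1-p^{-1/a})^{-1}$, controlled by $\omega(N)=O(\log N/\log\log N)$ and a lower bound on the $i$-th prime. The paper instead partitions $[1,N]$ by $v=\gcd(n,N)$, writes $n=vn_v$, $N=vN_v$, shows $n_v$ must be an $a$-th power (a \emph{strictly stronger} necessary condition than yours, since it constrains primes $p\mid n_v$ with $p\mid N$ but $p\nmid N_v$, which your argument ignores — though your weaker condition still carves out only $o(N)$ of the $n$'s, so this doesn't matter), and then compares $|U_{v,N}'|\leq N_v^{1/a}$ against $|U_{v,N}|=\varphi(N_v)$ using the explicit Rosser–Schoenfeld lower bound on the totient. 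Both approaches give $o(N)$; yours yields the cleaner quantitative bound $O(N^{1/a+o(1)})$ at the modest cost of invoking a Chebyshev/PNT-type lower bound on $p_i$ and a smoothness-count for $n_1$, while the paper sidesteps the Euler product entirely by leaning on the single totient inequality and the identity $\sum_{v\mid N}\varphi(N/v)=N$. One small cosmetic point: it is worth saying explicitly that $a\geq 2$ (which follows from $a>b\geq 1$), since this is exactly what makes $N^{1/a+o(1)}=o(N)$, and also that the bound $-\log(1-x)\leq 2x$ holds throughout the range $x\leq 2^{-1/2}$ relevant to $a=2$ (a one-line calculus check).
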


\begin{proof}
Fix a natural number $N$.
For any $v$ that divides $N$ set $N_v=N/v$ and
\[
U_{v,N}=\{1\leq n\leq N:\,\gcd(n,N)=v\}=\{1\leq vn_v\leq N:\,\,\gcd(n_v,N_v)=1\}
\]
and note that the cardinality $|U_{v,N}|$ of $U_{v,N}$ is just $\varphi(N_v)$, where $\varphi$ is the Euler totient function.
Therefore
\begin{equation}\label{Composition}
\sum_{v|N}|U_{v,N}|=\sum_{v|N}\varphi(N_v)=N.
\end{equation}
Denote by $U_{v,N}'$ the set of all members of $U_{v,N}$ for which there exists $m$ satisfying (\ref{SpecEq}).
In order to find sets $\Gam_{N,\del}$ with the properties described in the statement of the lemma, it is enough to show that  
\begin{equation}\label{Goal}
\lim_{N\to\infty}\frac1{N}\sum_{v|N}|U_{v,N}'|=0
\end{equation}
where $|U_{v,N}'|$ is the cardinality of $U_{v,N}'$,
since the expression inside the limit is just $N^{-1}$ times the number of $n$'s in $[1,N]$ for which (\ref{SpecEq}) holds with some $m$.

Let $n\in U_{v,N}'$ for some $v$ and. Then (\ref{SpecEq}) holds true for some $m$, and this $m$ must divide $v$. Therefore  (\ref{SpecEq}) also holds true with $n_v=n/v$ and $m_v=m/v$ and $N_v$ in place of $n,m$ and $N$, respectively.
Let $p$ be a prime number that divides $n_v$ and let $e$ be the largest power of $p$ so that $p^e$ divides $n_v$. Then $eb$ is the largest power of $p$ that divides $n_v^b$. Write 
$eb=ka+w$ for some $k$ and $0\leq w<a$. Then $p^{ka+w}$ divides $m_v^a$ and so, if $w\not=0$, then $p^{k+1}$ divides $m_v$ which implies that $p^{ak+a}$ divides $n_v^b$ since 
\[
\sum_{j=b}^a\al_j n_v^jN_v^{a-j}=n_v^{b}\big(\al_bN_v^{a-b}+n_v\sum_{j=b+1}^an_v^{j-1}N_v^{a-j}\big)
\]
and the second factor of the right hand side is not divisible by $p$ (since $\gcd(n_v,N_v)=1$ and $|\alpha_b|=1$).
 This is clearly a contradiction since $ak+a>eb$. Therefore, $w=0$ and so $eb=ka$ for some $k$. But $a$ and $b$ are coprime, and hence $k$ must have the form $k=k'b$ for some integer $k'$. Therefore we can write $e=k'a$, namely $n$ must have the form $n=vz^a$ for some integer $z$. Next, recall the following inequality (see Theorem 15 in \cite{Ross}), 
\begin{equation}\label{R(M)}
\varphi(M)\geq R(M):=\frac{M}{e^\gam \ln\ln M+3\big(\ln\ln M\big)^{-1}},\,\, M>2
\end{equation}
where $\gam$ is Euler's constant. The equality $n_v=z^a$ clearly implies that $\gcd(z,N_v)=1$ and therefore $vz$ is a member of $U_{v,N}$.  The  map $n\to z$ from $U_{v,N}'$ to $U_{v,N}$, where $n=vz^a$ is one to one, and its image is contained in the interval $[1,N_v^{\frac1a}]$ (since $z\leq N_v^{\frac1a}$), and hence $|U_{v,N}'|\leq N_v^{\frac1a}$. When $N_v>2$ then, with $c=3\big(\ln\ln 3 \big)^{-1}$, applying (\ref{R(M)}) we derive that
\[
|U_{v,N}'|\leq N_v^{\frac1a}=R(N_v)\cdot (e^{\gam}\ln\ln N_v+c) N_v^{-(1-\frac1a)}
\leq |U_{v,N}|(e^{\gam}\ln\ln (N_v)+c) N_v^{-(1-\frac1a)}
\]
which together with (\ref{Composition}) yields (\ref{Goal}).
\end{proof}

The following result follows now from Lemma \ref{Lemma} and \ref{Number lemma}.
\begin{corollary}\label{fractional corollary}
Suppose that $R_k\equiv0$ and that $\gam_k$ has the form 
\[
\gam_{k}(y)=\Big(\sum_{j=b}^a\al_j y^j\Big)^{\frac1a}
\]
where $a,b$ and $\al_j$ satisfy the conditions of Lemma \ref{Number lemma}. 
Let $s\leq k-1$ be so that 
$C_u\equiv0$ for any $2\leq u\leq s$. Suppose, in addition, that $a<s$. 
Then the polynomials $q$ and $p$ have exploding differences. 
\end{corollary}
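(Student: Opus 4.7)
The plan is to combine Lemma \ref{Lemma} with Lemma \ref{Number lemma} through a dichotomy on the distance $|m - N\gam_k(n/N)|$. Fix $\del \in (0,1)$ and restrict attention to $n,m \in [\del N, N]$, as required by the definition of exploding differences. Since $R_k \equiv 0$, Lemma \ref{Lemma} reduces to
\[
q_N(m) - p_N(n) = d + Q_k'(\gam_k(y))\, N^{k-1}\, H_{N,y}(m - N\gam_k(y)),
\]
with $y = n/N$ and $d = q_0(0) - P_0$ a constant depending only on the polynomials. The hypothesis $C_u \equiv 0$ for $2 \leq u \leq s$ places the first (possibly) nonzero $C_u$ at an index $s_0 \geq s+1$, so (when such an $s_0 \leq k-1$ exists) Lemma \ref{lemma} supplies a set $\Gam_{N,\del} \subset [1,N]$ with $|\Gam_{N,\del}| \leq \del N$ on whose complement $|H_{N,n/N}(0)| \leq B_2(\del)\, N^{-(s_0-1)} \leq B_2(\del)\, N^{-s}$; the alternative case $H_{N,y}(0) \equiv 0$ is easier and needs no exceptional set. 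Together with the bound $|x_N(y)| = O(N^{-s})$ coming from (\ref{A function}), Lemma \ref{Lemma} then gives, for a suitable constant $K$ and $n \in [\del N, N] \setminus \Gam_{N,\del}$,
\[
|m - N\gam_k(y)| \geq K N^{-(s-1)} \,\,\Longrightarrow\,\, |q_N(m) - p_N(n)| \geq K_\del N^{k-s} - |d| \geq c_\del N,
\]
where $k - s \geq 1$ is used in the last step.

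The remaining case is the small-distance regime $|m - N\gam_k(n/N)| < K N^{-(s-1)}$, which I would convert into an integer identity. Setting $t = N\gam_k(y)$, the explicit form of $\gam_k$ yields $t^a = \sum_{j=b}^a \al_j n^j N^{a-j}$, and the elementary factoring
\[
m^a - t^a = (m - t)\sum_{i=0}^{a-1} m^{a-1-i} t^i
\]
combined with $m,\, t = O(N)$ on the relevant range (guaranteed by $n,m \in [\del N, N]$ and boundedness of $\gam_k$ on $[0,1]$) shows that the second factor is $O(N^{a-1})$, so $|m^a - t^a| = O(N^{a-s})$. The assumption $a < s$ therefore forces $|m^a - \sum_{j=b}^a \al_j n^j N^{a-j}| < 1$ for all sufficiently large $N$; since both quantities are integers, equality must hold, i.e.\ equation (\ref{SpecEq}) is satisfied.

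At this point Lemma \ref{Number lemma} supplies a set $\Gam'_{N,\del} \subset [1,N]$ of cardinality at most $\del N$ outside of which (\ref{SpecEq}) has no positive integer solution $m$. Taking the combined exceptional set $\Gam_{N,\del} \cup \Gam'_{N,\del}$ (of cardinality $\leq 2\del N$) eliminates every $n$ for which the small-distance regime can arise, and a straightforward rescaling $\del \mapsto \del/2$ then produces sets matching the definition of exploding differences.

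The main obstacle I anticipate is the technical bookkeeping at the interface of the two regimes: verifying that the $O(N^{-s})$ discrepancy between $N\gam_k(y)$ and the shifted point $\al_N(y) = N\gam_k(y) + x_N(y)$ used in Lemma \ref{Lemma} is harmlessly absorbed into the constant $K$, and that $t = N\gam_k(y)$ has magnitude at most $O(N)$ uniformly on the relevant range so that the factoring estimate $|m^a - t^a| = O(N^{a-s})$ is sharp enough to exploit $a < s$. Both are routine once $\del$ is fixed; the conceptual heart of the argument is the integrality trick by which a real-analytic approximation forced by smallness of $|m - N\gam_k(y)|$ is upgraded, under the gap $a < s$, to an exact Diophantine identity that Lemma \ref{Number lemma} forbids for almost all $n$.
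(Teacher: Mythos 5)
Your proof is correct and takes essentially the same route as the paper: both arguments hinge on applying Lemma \ref{Lemma} to lower-bound $|q_N(m)-p_N(n)|$ when $|m-N\gam_k(n/N)|$ is not too small, and on the integrality observation that $m^a - (N\gam_k(n/N))^a = m^a - \sum_{j=b}^a\al_j n^j N^{a-j}$ is an integer, so the ``too small'' regime forces equation (\ref{SpecEq}) to hold exactly, which Lemma \ref{Number lemma} excludes outside a negligible set. The only cosmetic difference is that you argue via the factoring $m^a - t^a = (m-t)\sum_{i=0}^{a-1}m^{a-1-i}t^i$ to push smallness of $|m-t|$ to $|m^a-t^a|<1$, whereas the paper applies the mean value theorem to $W(x)=x^a-\sum_{j=b}^a\al_jn^jN^{a-j}$ to convert $|W(m)|\geq 1$ into the lower bound $|m-N\gam_k(y)|\geq a^{-1}N^{-(a-1)}$; these are contrapositives of the same elementary estimate, and both exploit $a<s$ in the same way.
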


\begin{proof}
Observe that $m=N\gam_k(n/N)$ if and only if the equation (\ref{SpecEq}) holds true. Therefore, by Lemma \ref{Number lemma}, in order to complete the proof of Corollary \ref{fractional corollary}, it is sufficient to show that for any $n$ and $m$ so that $m\not=N\gam_N(n/N)$ we have
\begin{equation}\label{Bound 1}
|q_N(m)-p_N(n)|\geq Q_k'(c\gam_k(y))N^{k-1-a}
\end{equation}
where $y=n/N$ and $c$ is some constant. 
Since $x_N(y)\leq BN^{-s}$ when $\gam_k(y)>2A_1N^{-1}$, where $B>0$ is some constant, then by Lemma \ref{Lemma} in order to show that (\ref{Bound 1}) holds true,
it is enough to show that for any $n$ and $m$ so that $m\not=N\gam_k(n/N)$ we have
\[
|m-N\gam_k(y)|\geq CN^{-(a-1)}.
\]
In order to prove the latter inequality, we define the polynomial $W(x)$ by 
\[
W(x)=x^{a}-\sum_{j=b}^a\al_jn^j N^{b-j}.
\]
Then $W(N\gam_k(y))=0$ and
\[
|W(m)|=|m^a-\sum_{j=b}^a\al_jn^j N^{b-j}|\geq1
\]
since we have assumed that $m\not=N\gam_k(y)$.
Therefore, by the mean value theorem, for some $\xi$ between $m$ and $N\gam_k(y)$,
\begin{eqnarray*}
|m-N\gam_k(y)|=a^{-1}\xi^{-(a-1)}|W(m)-W(z)|=a^{-1}\xi^{-(a-1)}|W(m)|\\\geq 
a^{-1}\xi^{-(a-1)}\geq a^{-1}N^{-(a-1)}
\end{eqnarray*}
and the proof of the corollary is complete.
\end{proof}

The following result also follows
\begin{corollary}\label{N cor}
Suppose that the conditions of Proposition \ref{Prop1} hold and that 
$
P(n,N)=\sum_{j=b}^a\al_j n^jN^{a-j}
$
and $Q(m,N)=m^{a}$ for some positive coprime integers $a>b$ and integers $\al_b,...,\al_a$ so that $|\al_b|=1$ (see Corollary \ref{N cor}). Then the polynomials $q$ and $p$ have exploding differences.
\end{corollary}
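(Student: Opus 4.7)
The plan is to reduce the corollary directly to Lemma \ref{Number lemma} via the ``$\bar d=0$'' sufficient condition for exploding differences derived in the remark after Proposition \ref{Prop1}. The key observation is that, for the specific $P$ and $Q$ in the statement, asking whether $P(n,N)$ lies in the range $\{Q(m,N):m\in\bbN\}$ is precisely asking whether the Diophantine equation (\ref{SpecEq}) of Lemma \ref{Number lemma} admits a solution, so the combinatorial number-theoretic estimate we need is already available off the shelf.

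First I would observe that with $Q(m,N)=m^a$ and $P(n,N)=\sum_{j=b}^a\al_j n^jN^{a-j}$ the set
\[
E_N=\big\{1\leq n\leq N:\,P(n,N)\in\{Q(m,N):\,m\in[1,N]\}\big\}
\]
is contained in the (a priori larger) set of $n\in[1,N]$ for which some $m\in\bbN$ satisfies $m^a=\sum_{j=b}^a\al_j n^jN^{a-j}$, which is exactly (\ref{SpecEq}). The hypotheses on $a$, $b$ and the coefficients $\al_j$ in the corollary match those of Lemma \ref{Number lemma} verbatim, so the lemma applies.

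Next, fixing $\del>0$ and invoking Lemma \ref{Number lemma}, I obtain an $N_\del$ and sets $\Gam_{N,\del}\subset[1,N]$ of cardinality at most $\del N$ such that for every $N>N_\del$ and every $n\in[1,N]\setminus\Gam_{N,\del}$ equation (\ref{SpecEq}) has no solution in the positive integers. A fortiori it has no solution with $m\in[1,N]$, so $E_N\subset\Gam_{N,\del}$ for all such $N$. Hence $|E_N|/N\leq\del$, and since $\del>0$ was arbitrary this yields $\bar d=0$ in the notation introduced after Proposition \ref{Prop1}. By the discussion there, $\bar d=0$ implies that $q$ and $p$ have exploding differences, which is the desired conclusion.

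I do not anticipate any genuine obstacle. All of the number-theoretic heavy lifting (the Euler-totient argument controlling how many $n\in[1,N]$ can make the right hand side of (\ref{SpecEq}) an exact $a$-th power) is already packaged inside Lemma \ref{Number lemma}, and the structural passage from $\bar d=0$ to exploding differences for the full polynomials $p$ and $q$ (which absorbs the lower-order remainders $r(n,N)$ and $s(m,N)$ through the factor $H(N)$) has already been carried out in the paragraph after Proposition \ref{Prop1}. The only verifications to perform are the trivial matching of hypotheses between the corollary and the lemma, and the observation that restricting $m$ to $[1,N]$ only shrinks the solution set relative to $m\in\bbN$.
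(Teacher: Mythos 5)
Your proof is correct and matches the route the paper intends: the paper states Corollary \ref{N cor} as a consequence without spelling out a proof, but the intended chain is exactly the one you give, namely that Lemma \ref{Number lemma} shows the exceptional set $E_N$ where $P(n,N)=Q(m,N)$ has a solution has density zero, hence $\bar d=0$, and then the remark after Proposition \ref{Prop1} (together with the lower bound $|q_N(m)-p_N(n)|\geq R_\del|H(N)|\geq C_\del N$ off that set, using that $H$ is non-constant) delivers exploding differences. You have correctly matched the hypotheses of Lemma \ref{Number lemma} to the coefficients in the corollary and filled in the small verification that restricting $m$ to $[1,N]$ only shrinks the solution set.
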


\begin{remark}
Suppose that $R_k\equiv0$ and let $s$ be as in Corollary \ref{fractional corollary}.
Assume also that $\gam_k(y)$ has the form $\gam_k(y)=K^{-1}E(y)$ for some polynomials $K$ and $H$ with integer coefficients whose degrees do not exceed $d$, for some $d<s$. Consider the polynomial $W(x)=K(x/N)-E(n/N)$. Then $W(N\gam_k(n/N))=0$ and therefore, applying the mean value theorem yields that
\[
|m-NK^{-1}E(n/N)|=\big(W'(\xi)\big)^{-1}|W(m)|
\]
for some $\xi$ between $m$ and $N\gam_k(y)$ (which is of order $N$ when $n,m\in[\del N,N]$). Notice that either $W(m)=0$ or 
\[
|W(m)|\geq CN^{-d}
\]
for some constant $C>0$ (when $1\leq m\leq N$). We conclude that for any $\del>0$ there exists a constant $C_\del>0$ so that for any $n,m\in[N\del,N]$ we either have $m=NK^{-1}E(n/N)$ or
\[
|m-NK^{-1}E(n/N)|\geq C_\del N^{-d-1}.
\]
The difficulty in using the above estimates in order to determine whether the polynomials $q$ and $p$ have exploding differences arises here in determining for which $n$'s there exists a solution $m$ to the equation $m=NK^{-1}E(n/N)$. When $\gam_k$ satisfies the conditions of Corollary \ref{fractional corollary} then we used Lemma \ref{Number lemma}, but for general polynomials  $K$ and $H$ it does not seem possible to show that the equation $m=NK^{-1}E(n/N)$ does not have a solution for any $n\in[\del N,N]\setminus\Gam_{N,\del}$ for sets $\Gam_{N,\del}$ whose cardinality does not exceed $\del N$.
%
\end{remark}



\section{Expectation estimates}
In this section we describe two results from \cite{book} which are key ingredients in the
 the proofs of Theorems \ref{LLN}, \ref{VarThm} and \ref{CLTthm}. 

Let $U_i,\,i=1,2,...,k$ be  $d_i$-dimensional random vectors defined on the
probability space $(\Om,\cF,P)$ from Section \ref{Sec1}, and
$\{\cC_j: 1\leq j\leq s\}$ be a partition of $\{1,2,...,k\}$. 
Consider the random vectors $U(\cC_j)=\{U_i: i\in\cC_j\}$, $j=1,...,s$, 
and let 
\[
U^{(j)}(\cC_i)=\{U_i^{(j)}: i\in\cC_j\},\,\, j=1,...,s
\]
be independent copies of the $U(\cC_j)$'s.
For each  $1\leq i\leq k$ let $a_i\in\{1,...,s\}$ be the unique index
such that $i\in\cC_{a_i}$, and 
for any   Borel function $H:\bbR^{d_1+d_2+...+d_k}\to\bbR$ set
\begin{equation}\label{ExpDiffDef}
\cD(H)=\big|\bbE H(U_1,U_2,...,U_k)-\bbE H(U_1^{(a_1)},U_2^{(a_2)},...,U_k^{(a_k)})\big|.
\end{equation}
Of course, the above quantity is defined only when the expectations inside the absolute value exist.

 The first result appears in \cite{book} as Corollary 1.3.14, and it formulation is as follows:

\begin{proposition}\label{1.3.14}
Let
$H:\bbR^{d_1+d_2+...+d_k}\to\bbR$ be a function satisfying (\ref{F Bound}) and 
(\ref{F Hold})  with $H$ in place of $F$ and with 
$u_i$'s in place of $x_i$'s.  Let $q,w>1$ and $v>0$ be such that 
\[
\frac 1w>\frac \iota v+\frac\ka q
\]
and set $r_i=[\frac13(m_i-n_{i-1})], i=2,...,k$, $r_1=r_2$ and $r_{k+1}=r_k$. Suppose that 
$\|U_i\|_{L^w}<\infty$ for any $1\leq i\leq k$. 
Then $\cD(H)$is well defined and 
\begin{equation}\label{l.h.s. CoeExpect3}
\cD(H)\leq 6R_0\Lambda_{w,\ka}
\end{equation}
where 
\[
\Lambda_{w,\ka}=\big(\sum_{i=2}^k\phi(r_i)\big)^{1-\frac1b}+
\sum_{i=1}^k\big(\big\|U_i-\bbE[U_i|\cF_{m_i-r_i,n_i+r_{i+1}}]\big\|_{L^q}\big)^\ka
\] 
$R_0=K(1+k g_m^\iota)$ and
$
\,g_m=\max\{\|U_i\|_{L^m}: 1\leq i\leq k\}
$. Here the $\sigma$-algebras $\cF_{k,l}$ are the ones specified in Section \ref{Sec1}.
\end{proposition}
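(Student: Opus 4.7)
\medskip

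\noindent\textbf{Proof proposal.} My plan is to combine two well-separated ingredients: (i) a truncation step that replaces each $U_i$ by the smoother version $\tilde U_i:=\bbE[U_i\,|\,\cF_{m_i-r_i,n_i+r_{i+1}}]$, and (ii) an iterative $\phi$-mixing decoupling step that shows the joint law of the $\tilde U_i$'s is, up to an error expressible in $\phi(r_i)$'s, the same as if the blocks $\{\tilde U_i:i\in\cC_j\}$ were independent across $j$. Since the independent copies $U^{(a_i)}_i$ can be approximated by their own truncated versions in exactly the same way, and since the truncated independent version has the exact same block-independence structure as the decoupled dependent version (by construction of $\tilde U_i$, its law depends only on the marginal of $U_i$), combining (i) and (ii) will yield the bound.

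For step (i), fix any coupling of the two ensembles on the same space and apply the H\"older-type condition (\ref{F Hold}) to bound
\[
\bigl|H(U_1,\dots,U_k)-H(\tilde U_1,\dots,\tilde U_k)\bigr|
\leq K\Bigl(1+\sum_{i=1}^k(|U_i|^\iota+|\tilde U_i|^\iota)\Bigr)\sum_{i=1}^k|U_i-\tilde U_i|^\ka.
\]
Taking expectations and applying H\"older's inequality with the three exponents $w,v/\iota,q/\ka$ (whose reciprocals sum to $1$ by hypothesis $1/w>\iota/v+\ka/q$, after bumping to achieve equality) converts the $\iota$-th moments into $g_w^\iota$ contributions absorbed in $R_0$, and leaves the term $\sum_i\|U_i-\tilde U_i\|_{L^q}^\ka$. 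Doing the same on the independent side contributes another copy of the same sum (since $U_i$ and $U_i^{(a_i)}$ are equidistributed, so are their truncations). This handles the $\beta$-part of $\Lambda_{w,\ka}$.

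For step (ii), observe that by the choice $r_i=[\tfrac13(m_i-n_{i-1})]$, successive truncation intervals $[m_i-r_i,n_i+r_{i+1}]$ leave a gap of length at least $r_i$ between $\cF_{m_i-r_i,n_i+r_{i+1}}$ and $\cF_{m_{i-1}-r_{i-1},n_{i-1}+r_i}$, and in particular, the $\sig$-algebras generated by the blocks $\tilde U(\cC_j)=\{\tilde U_i:i\in\cC_j\}$ are separated by gaps $\geq r_i$ from each other. The standard $\phi$-mixing inequality for bounded (after conditioning on a preceding block) measurable functions states
\[
\bigl|\bbE[G\cdot F]-\bbE[F]\,\bbE[G]\bigr|\leq 2\phi(r)\|F\|_{L^1}\|G\|_{L^\infty},
\]
and a standard interpolation upgrades this to $2\phi(r)^{1-1/b}\|F\|_{L^1}\|G\|_{L^b}$. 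Applying this iteratively across the $s-1$ gaps separating the partition blocks $\cC_j$, and using that the $L^b$ norms of the intermediate conditional expectations of $H$ are controlled by $R_0$ (again via (\ref{F Bound}) and H\"older), bounds the decoupling cost by a constant times $R_0(\sum_i\phi(r_i))^{1-1/b}$, which is exactly the first term of $\Lambda_{w,\ka}$.

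The constants $6$ and the precise exponent $1-1/b$ come from carefully tracking factor-of-$2$'s in each of the at most $k$ individual applications of the mixing and H\"older inequalities, together with the approximation on both the dependent and independent sides. The main obstacle I expect is the bookkeeping in the iterative decoupling: one must verify that after integrating out one block, the remaining integrand still enjoys an $L^b$-type bound in terms of $R_0$, so that the next application of the $\phi$-inequality has the right norm factor; this is essentially a repeated application of Jensen's inequality on conditional expectations against the truncated blocks, but it must be organized so that the constant does not blow up with $k$ beyond the stated $R_0$.
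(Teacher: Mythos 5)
Your overall shape is right---replace each $U_i$ by $\tilde U_i=\bbE[U_i\,|\,\cF_{m_i-r_i,n_i+r_{i+1}}]$, control that replacement via (\ref{F Hold}) and H\"older (which accounts for the $\beta$-term of $\Lambda_{w,\ka}$ and the constant $R_0$), and then decouple the $\tilde U_i$-blocks using $\phi$-mixing. This matches the paper's declared plan, which is to deduce the result from Proposition \ref{1.3.11} ``using standard approximations which involve the H\"older and the Markov inequalities.'' However, your treatment of the decoupling step has a genuine gap.

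You never truncate $H$, yet Proposition \ref{1.3.11} only applies to \emph{bounded} $H$, and $H$ here grows like $1+\sum_i|u_i|^\iota$. In place of truncation you propose a per-gap covariance inequality of the form $2\phi(r)^{1-1/b}\|F\|_{L^1}\|G\|_{L^b}$, applied across the $s-1$ ``gaps between blocks.'' Even granting such an interpolated mixing inequality, applying it once per gap produces a bound of the shape $\sum_i\phi(r_i)^{1-1/b}$, not the asserted $\big(\sum_i\phi(r_i)\big)^{1-1/b}$ --- and since $t\mapsto t^{1-1/b}$ is subadditive the former is the larger of the two, so you end up proving a strictly weaker statement than the proposition. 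The exponent $1-1/b$ is supposed to appear \emph{once}, globally, and the mechanism that produces it is the one the paper names: first apply Proposition \ref{1.3.11} to a bounded truncation $H_M$ of $H$, which gives $\cD(H_M)\leq 4\sup|H_M|\sum_{i=2}^k\phi(r_i)$; then control $\cD(H-H_M)$ by Markov's inequality and the moment assumption via H\"older; finally optimize over the truncation level $M$. That optimization is what converts the linear factor $\sum\phi(r_i)$ into $\big(\sum\phi(r_i)\big)^{1-1/b}$. Your proposal, by skipping the Markov/truncation step and instead invoking interpolation gap-by-gap, produces the exponent at the wrong level of the argument.

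Two secondary points. First, the partition blocks $\cC_j$ are \emph{not} assumed contiguous in $\{1,\dots,k\}$, so the phrase ``gaps separating the partition blocks'' is not well founded; the decoupling must be done variable-by-variable (as in the induction used to prove Proposition \ref{1.3.11}), not block-by-block, and decoupling all $k$ variables to full independence is the safe route since both the dependent and the block-independent ensemble reduce to the same product measure. Second, ``bumping to achieve equality'' in the H\"older exponents is shorthand for adding a dummy factor with the slack exponent $1/p_4=1-(1/w+\iota/v+\ka/q)>0$; that is fine, but be aware that what comes out is $\|U_j\|_{L^v}^\iota$, so the $g_m^\iota$ in $R_0$ refers to $L^v$-norms (bounded by the $L^w$-norms since $v<w$), not $L^w$-norms as you wrote.
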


Next, recall that the $\alpha$-dependence coefficients of any two sub-$\sig$-algebras $\cG$ and $\cH$ of $\cF$
is given by
\begin{equation}\label{general alpha mixing}
\al(\cG,\cH)=\sup\big\{|P(\Gam\cap\Del)-P(\Gam)P(\Del)|:
 \Gam\in\cG,\,\Del\in\cH\big\}.
\end{equation}
Another result we will need is the following proposition, which is stated in \cite{book} as Corollary 1.3.11:

\begin{proposition}\label{1.3.11}
Suppose that each $U_i$ is $\cF_{m_i,n_i}$-measurable, where $n_{i-1}<m_i\leq n_i<m_{i+1}$, 
$i=1,...,k$, $n_0=-\infty$ and $m_{k+1}=\infty$.  
Then, for any bounded
Borel function $H:\bbR^{d_1+d_2+...+d_k}\to\bbR$,
\begin{equation*}
\cD(H)\leq 4\sup|H|\sum_{i=2}^k\phi(m_i-n_{i-1})
\end{equation*}
where $\sup|H|$ is the supremum of $|H|$. In particular, when $s=2$ then
\[
\al\big(\sig\{U(\cC_1)\},\sig\{U(\cC_2)\}\big)\leq4\sum_{i=2}^k\phi(m_i-n_{i-1})
\]
where $\sig\{X\}$ stands for the $\sig$-algebra generated by a random variable 
$X$ and $\al(\cdot,\cdot)$ is given by (\ref{general alpha mixing}).
\end{proposition}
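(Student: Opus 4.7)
The plan is to bound $\cD(H)$ by a telescoping argument anchored to the standard $\phi$-mixing replacement inequality: if $(X,Y)$ has joint law $\nu$, and $(X,\tilde Y)$ has $\tilde Y$ independent of $X$ with the same marginal as $Y$, then for any bounded Borel $h$,
\[
|\bbE_\nu h(X,Y)-\bbE h(X,\tilde Y)|\leq 2\phi(\sigma(X),\sigma(Y))\sup|h|,
\]
which follows from the characterization $\|P(A|\sigma(X))-P(A)\|_\infty\leq\phi(\sigma(X),\sigma(Y))$ for $A\in\sigma(Y)$ via a Fubini-type calculation on the signed measure $P_{Y|X}-P_Y$.

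I will interpolate between $V^{(0)}=(U_1,\ldots,U_k)$ and $V^{(k)}=(U_1^{(a_1)},\ldots,U_k^{(a_k)})$ by vectors $V^{(j)}$, $j=0,\ldots,k$, whose first $j$ coordinates are drawn from the block-independent target (with the correct within-block joint law restricted to indices $\le j$), whose last $k-j$ coordinates equal $U_{j+1},\ldots,U_k$, and in which the two groups are independent of each other. To control each single step $V^{(j-1)}\rightsquigarrow V^{(j)}$ I will insert an intermediate vector $V^{(j-1/2)}$ obtained from $V^{(j-1)}$ by replacing the tail $(U_{j+1},\ldots,U_k)$ by an independent copy of itself, keeping its original within-tail joint law but breaking its dependence on $U_j$ and on everything else.

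Applying the replacement lemma twice: the transition $V^{(j-1)}\to V^{(j-1/2)}$ decouples $U_j$, which is $\cF_{m_j,n_j}$-measurable, from the future $(U_{j+1},\ldots,U_k)$, which is $\cF_{m_{j+1},\infty}$-measurable — the already-placed target portion of $V^{(j-1)}$ is independent of both sides and does not inflate the coefficient — so the cost is at most $2\phi(m_{j+1}-n_j)\sup|H|$. The transition $V^{(j-1/2)}\to V^{(j)}$ rewires the decoupled variable from being independent of the target into being jointly distributed with the same-block target variables $\{U_i^{(a_j)}:i<j,\,a_i=a_j\}$ via the original block sub-joint; that joint in the original law lives between $\cF_{-\infty,n_{j-1}}$ and $\cF_{m_j,n_j}$, so the replacement lemma bounds the cost by $2\phi(m_j-n_{j-1})\sup|H|$. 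Summing and reindexing, each $\phi(m_i-n_{i-1})$ with $2\le i\le k$ appears exactly twice (once as a future term at step $i-1$ and once as a past term at step $i$), while the boundary contributions $\phi(m_1-n_0)$ and $\phi(m_{k+1}-n_k)$ vanish thanks to $n_0=-\infty$ and $m_{k+1}=\infty$; the total is therefore $4\sup|H|\sum_{i=2}^k\phi(m_i-n_{i-1})$.

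For the ``in particular'' clause with $s=2$, I will specialize $H$ to the indicator $\mathbf{1}_{\Gamma_1\times\Gamma_2}$ of a Borel rectangle in the range of $(U(\cC_1),U(\cC_2))$; then $\cD(H)=|P(U(\cC_1)\in\Gamma_1,\,U(\cC_2)\in\Gamma_2)-P(U(\cC_1)\in\Gamma_1)P(U(\cC_2)\in\Gamma_2)|$, and taking the supremum over all such rectangles reproduces exactly the $\alpha$-mixing coefficient from (\ref{general alpha mixing}), giving the stated bound with the same factor $4$. The main obstacle I anticipate is the careful bookkeeping at the intermediate vertex $V^{(j-1/2)}$: all three laws must be realized on a common enlarged probability space, and one must verify at each sub-step that the replacement lemma is being applied between $\sigma$-algebras whose temporal separation is exactly $m_{j+1}-n_j$ or $m_j-n_{j-1}$, so that the final sum collapses as claimed; the factor $4$ rather than $2$ in the final bound is intrinsic to this two-step decomposition, with each interface $\phi(m_i-n_{i-1})$ being charged once from each of its two sides.
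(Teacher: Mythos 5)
Your proof is correct, and it takes a genuinely different route from the one sketched in the paper. The paper first establishes (by induction on $k$) the auxiliary bound (\ref{Expec2-StPaper}) comparing $\bbE H(U_1,\ldots,U_k)$ to the \emph{fully} independent product measure, and then proves the proposition by a second induction on the number of blocks $s$, invoking (\ref{Expec2-StPaper}) in the induction step. You instead run a single forward telescope in $2k$ half-steps: the ``even'' sub-step at index $j$ applies the replacement inequality (which is Lemma \ref{thm3.11-StPaper} after taking expectation) to sever $U_j$ from the future $(U_{j+1},\ldots,U_k)$ at cost $2\phi(m_{j+1}-n_j)\sup|H|$, and the ``odd'' sub-step re-knits $U_j$ to the within-block past $\{U_i^{(a_j)}:i<j,\ i\in\cC_{a_j}\}\subset\cF_{-\infty,n_{j-1}}$ at cost $2\phi(m_j-n_{j-1})\sup|H|$; you correctly use that already-decoupled coordinates independent of both sides do not inflate the $\phi$-coefficient (a routine conditioning argument), and the boundary terms vanish since $n_0=-\infty$ and $m_{k+1}=\infty$. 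Your bookkeeping is right: each $\phi(m_i-n_{i-1})$, $2\le i\le k$, is charged once at step $i-1$ (even half) and once at step $i$ (odd half), giving the factor $4$. The benefit of your approach is that it avoids the double induction and treats all cases uniformly with one elementary mechanism; the paper's two-stage structure is slightly more modular, since (\ref{Expec2-StPaper}) is reused elsewhere. The ``in particular'' clause follows exactly as you say by specializing to product indicators, since every set in $\sigma\{U(\cC_i)\}$ has the form $\{U(\cC_i)\in\Gamma_i\}$.
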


In fact, Proposition \ref{1.3.14} follows from Proposition \ref{1.3.11} using standard approximations which involve 
 the H\"older and the Markov inequalities (note the function $H$ in Proposition \ref{1.3.14} is locally H\"older continuous).
The proof of Proposition \ref{1.3.14} is based on the following lemma (see Lemma 1.3.10 in \cite{book} or Lemma 3.11 in \cite{Hafouta ECP}).

\begin{lemma}\label{thm3.11-StPaper}
Let $\cG_1,\cG_2\subset\cF$ be two sub-$\sigma$-algebras of $\cF$ and for $i=1,2$ let
$V_i$ be a $\bbR^{d_i}$-valued random $\cG_i$-measurable vector with distribution $\mu_i$.  Set
$d=d_1+d_2$, $\mu=\mu_1\times\mu_2$, denote by $\ka$
the distribution of the random vector $(V_1,V_2)$
and consider the measure $\nu=\frac12(\ka+\mu)$.  Let $\cB$  be the Borel $\sigma$-algebra on $\bbR^d$ and 
$H\in L^\infty(\bbR^d,\cB,\nu)$. Then $\bbE[H(V_1,V_2)|\cG_1]$ and $\bbE H(v,V_2)$ exist for $\mu_1$-almost
any $v\in\bbR^{d_1}$ and
\begin{equation}\label{Meas-StPaper}
|\bbE[H(V_1,V_2)|\cG_1]-h(V_1)|
\leq2\|H\|_{L^\infty(\bbR^d,\cB,\nu)}\phi(\cG_1,\cG_2),\,\,P-a.s.
\end{equation}
 where $h(v)=\bbE H(v,V_2)$ and a.s. stands for almost surely.
\end{lemma}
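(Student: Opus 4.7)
The plan is to realize both $\bbE[H(V_1,V_2)|\cG_1]$ and $h(V_1)$ as integrals of $H(V_1(\omega),\cdot)$ against two different probability measures on $\bbR^{d_2}$, and then bound their difference by the total variation distance between those measures. The existence of $h$ follows from Fubini: since $\nu\geq\tfrac12\mu$ as measures, the bound $|H|\leq M:=\|H\|_{L^\infty(\bbR^d,\cB,\nu)}$ holds $\mu$-almost everywhere, so $\int|H(v,v_2)|\,d\mu_2(v_2)\leq M$ for $\mu_1$-a.e.\ $v$. The same inequality holds $\kappa$-a.e., so $|H(V_1,V_2)|\leq M$ $P$-a.s., and we may replace $H$ by $H\,\bbI_{\{|H|\leq M\}}$ without affecting any of the integrals that follow.

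Since $\bbR^{d_2}$ is Polish, a regular conditional distribution $Q(\omega,\cdot)$ of $V_2$ given $\cG_1$ exists. Using that $V_1$ is $\cG_1$-measurable and a standard monotone-class argument (first verifying it for indicators of product sets), one obtains
\begin{equation*}
\bbE[H(V_1,V_2)\mid\cG_1](\omega)=\int H(V_1(\omega),v_2)\,Q(\omega,dv_2)\qquad P\text{-a.s.,}
\end{equation*}
while $h(V_1(\omega))=\int H(V_1(\omega),v_2)\,d\mu_2(v_2)$. Setting $\lambda_\omega=Q(\omega,\cdot)-\mu_2$, the difference in the lemma is at most $M\|\lambda_\omega\|_{\mathrm{TV}}$, so it suffices to prove $\|\lambda_\omega\|_{\mathrm{TV}}\leq 2\phi(\cG_1,\cG_2)$ $P$-a.s.

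For each fixed Borel $B\subset\bbR^{d_2}$ one has $\lambda_\omega(B)=P(V_2\in B\mid\cG_1)(\omega)-P(V_2\in B)$, and a short Hahn-type argument using the definition of $\phi(\cG_1,\cG_2)$, applied with $\Gamma=\Gamma_B^\pm:=\{\pm\lambda_\cdot(B)>\phi(\cG_1,\cG_2)\}\in\cG_1$ and $\Delta=\{V_2\in B\}\in\cG_2$, shows that $|\lambda_\omega(B)|\leq\phi(\cG_1,\cG_2)$ $P$-a.s. The main obstacle is that the exceptional null set in this statement depends on $B$, whereas one needs the inequality uniformly over all Borel $B$ to control $\|\lambda_\omega\|_{\mathrm{TV}}$. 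I would resolve this by choosing a countable algebra $\cE$ generating the Borel $\sigma$-algebra of $\bbR^{d_2}$, so that the bound holds simultaneously for all $B\in\cE$ off a single $P$-null set $N_0$; for $\omega\notin N_0$, any Borel $B$ can be approximated in symmetric difference with respect to the finite measure $Q(\omega,\cdot)+\mu_2$ by elements of $\cE$, which gives $\sup_{B\text{ Borel}}\lambda_\omega(B)\leq\phi(\cG_1,\cG_2)$. Since $\lambda_\omega$ has total mass zero, the Jordan decomposition yields $\|\lambda_\omega\|_{\mathrm{TV}}=2\sup_B\lambda_\omega(B)\leq 2\phi(\cG_1,\cG_2)$, completing the argument.
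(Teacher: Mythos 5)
Your proof is correct, and it is a complete argument. The paper itself does not reproduce a proof of this lemma (it cites it as Lemma 1.3.10 in \cite{book} and Lemma 3.11 in \cite{Hafouta ECP}), so a literal line-by-line comparison is not possible; nonetheless the route you take — freeze $V_1$ via its $\cG_1$-measurability, express $\bbE[H(V_1,V_2)\mid\cG_1]$ and $h(V_1)$ as integrals of the same section $H(V_1(\om),\cdot)$ against a regular conditional distribution $Q(\om,\cdot)$ and against $\mu_2$, and then bound the difference by $\|H\|_\infty\,\|Q(\om,\cdot)-\mu_2\|_{\mathrm{TV}}$ — is the standard way this kind of ``factor $2\phi$'' inequality is established, and almost certainly matches the cited sources up to presentation.

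All the delicate points are handled: you observe that $\nu\geq\tfrac12\mu$ and $\nu\geq\tfrac12\kappa$ force $|H|\leq M$ both $\mu$- and $\kappa$-a.e., so $H$ may be replaced by $H\,\bbI_{\{|H|\leq M\}}$, which makes the disintegration formula and the definition of $h$ unambiguous and secures the two existence claims in the statement. The step $\|\lambda_\om\|_{\mathrm{TV}}\leq 2\phi(\cG_1,\cG_2)$ is the crux, and your treatment of the exceptional null set is the right one: the events $\Gam_B^\pm\in\cG_1$ paired with $\{V_2\in B\}\in\cG_2$ give $|\lambda_\om(B)|\leq\phi(\cG_1,\cG_2)$ off a $B$-dependent null set, and passing to a countable generating algebra $\cE$ plus Carath\'eodory approximation in the measure $Q(\om,\cdot)+\mu_2$ (which dominates $|\lambda_\om|$) collapses all these null sets into one. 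Finally $\|\lambda_\om\|_{\mathrm{TV}}=2\sup_B\lambda_\om(B)$ because $\lambda_\om$ has zero total mass, and $\big|\int f\,d\lambda_\om\big|\leq M\|\lambda_\om\|_{\mathrm{TV}}$ closes the argument. One cosmetic remark: what you call a ``Hahn-type argument'' for $|\lambda_\om(B)|\leq\phi$ is really just a direct use of the tower property together with the definition of $\phi$ in (\ref{general phi mixing}); it would read more transparently if stated that way. No gaps.
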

For the sake of completeness, let us explain the idea behind the proof of Proposition \ref{1.3.11}.
In the circumstances of Lemma \ref{thm3.11-StPaper}, it follows by induction on $k$ that 
\begin{eqnarray}
|\bbE H(U_1,U_2,...,U_k)-\int H(u_1,u_2,...,u_k)d\nu_1(u_1)d\nu_2(u_2)...d\nu_k(u_k)|
\nonumber\\
\leq 2\sup|H|\sum_{i=2}^v\phi(m_i-n_{i-1}).\hskip2cm\label{Expec2-StPaper}
\end{eqnarray}
where $\nu_i$ is the distribution of $U_i$.
The proof of Proposition \ref{1.3.11} is carried out by induction on the number of blocks $s$, where the main ingredient in the induction step is (\ref{Expec2-StPaper}) (applied with various functions appearing in the proof).

\section{First four moments growth and the SLLN}\label{Sec3}
\subsection{Linear growth of the variance}
For each $n$ and $N$ set 
\[
F_{n,N}=F(\xi_{q_{1}(n,N)},...,\xi_{q_\ell(n,N)}).
\]
We begin with noting that
by Assumption \ref{Moment Ass} and (\ref{F Bound}) there exists a constant $B>0$ so that 
\begin{equation}\label{B bound}
\sup_{n,N}\|F_{n,N}\|_{L^w}\leq B
\end{equation}
where $w$ comes from Assumption \ref{Moment Ass}.
Next, for each $l$ set 
\[
a(l)=\big((\phi(l/3))^{1-\frac 2w}+(\beta_q(l/3))^\ka\big)
\]
here $w$ and $\te$ come from Assumptions \ref{Moment Ass}  and  \ref{Mix2}.
In the circumstances of Theorems \ref{VarThm} and \ref{CLTthm}, we have $\te>\frac{4w}{w-2}$ which implies that
$\sum_{l=0}^\infty (l+1)a(l)<\infty$.

Our first result shows, in particular, that the variance of $S_N=\cS_N(1)$ behaves as the expectation of $S_N^2$:
\begin{lemma}\label{expectation lemma}
Suppose that Assumption \ref{Moment Ass} holds true and that
$\sum_{u=0}^\infty a(u)<\infty$.
Then there exists a constant $C>0$ so that for any $N\geq1$,
\[
\sum_{n=1}^N\big|\bbE F_{n,N}\big|\leq C.
\]
\end{lemma}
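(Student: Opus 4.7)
The plan is to apply Proposition~\ref{1.3.14} for each individual $n$ to control $|\bbE F_{n,N}|$ by the coefficient $a$ evaluated at the minimum gap between the sorted indices $q_i(n,N)$, and then to sum over $n$ using the fact that each polynomial difference $D_{ij}(n,N):=q_i(n,N)-q_j(n,N)$ is a non-constant polynomial.

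I would first fix $n,N$ with all $q_i(n,N)$ distinct, sort them as $t_1<\cdots<t_\ell$, and apply Proposition~\ref{1.3.14} with singleton blocks $U_i=\xi_{t_i}$, $m_i=n_i=t_i$, $r_i=\lfloor(t_i-t_{i-1})/3\rfloor$. Using the centering $\brF=0$ in force for the CLT (see (\ref{F bar})), the monotonicity of $\phi$ and $\beta_q$, and the uniform $L^w$ bound (\ref{B bound}), this will give
\[
|\bbE F_{n,N}|\;\le\; C_\ell\,a(g_n/3),\qquad g_n:=\min_{i\neq j}|D_{ij}(n,N)|,
\]
with $C_\ell$ independent of $n,N$. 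The exceptional set $B_N:=\{n\in[1,N]:D_{ij}(n,N)=0\text{ for some }i<j\}$ should be uniformly bounded in size for all but finitely many $N$ (each $D_{ij}$ is a non-constant polynomial, so either $D_{ij}(\cdot,N)$ has at most $\deg_n D_{ij}$ roots for fixed $N$, or $D_{ij}=E(N)$ for a non-constant polynomial $E$ in $N$ alone, with only finitely many zeros); on $B_N$, the bound (\ref{B bound}) gives $|\bbE F_{n,N}|\le B$, so the total contribution from $B_N$ is bounded.

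Next, since $a$ is non-increasing,
\[
a(g_n/3)\;=\;\max_{i<j}a(|D_{ij}(n,N)|/3)\;\le\;\sum_{i<j}a(|D_{ij}(n,N)|/3),
\]
and it suffices to bound $\sum_{n=1}^{N}a(|D_{ij}(n,N)|/3)$ uniformly in $N$ for each fixed pair $(i,j)$. If $D_{ij}$ depends on $n$ (as a polynomial in $(n,N)$), then for $N$ large the univariate polynomial $D_{ij}(\cdot,N)$ has fixed positive degree $d$ in $n$; for every integer $v\ge 0$ the equations $D_{ij}(\cdot,N)=\pm v$ have at most $2d$ integer solutions, so reorganising by the value of $|D_{ij}|$ gives
\[
\sum_{n=1}^{N}a(|D_{ij}(n,N)|/3)\;\le\;2d\sum_{v\ge 0}a(v/3)\;\le\;C\sum_{u\ge 0}a(u)<\infty.
\]
If instead $D_{ij}$ depends only on $N$, write $D_{ij}=E(N)$ for a non-constant integer polynomial $E$; then $|E(N)|\ge cN$ for some $c>0$ and all $N$ large, and
\[
N\cdot a(|E(N)|/3)\;\le\;\tfrac{3}{c}\cdot\tfrac{|E(N)|}{3}\cdot a(|E(N)|/3)\;\longrightarrow\;0,
\]
using the standard fact that $a$ non-increasing and $\sum_u a(u)<\infty$ force $u\,a(u)\to 0$. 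In both cases the inner sum is uniformly bounded, and summing over the $\binom{\ell}{2}$ pairs together with the contribution from $B_N$ and the finitely many small or "bad" values of $N$ yields the claim.

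The main obstacle will be precisely this second case — pairs $(i,j)$ for which $D_{ij}$ depends only on $N$, for instance $q_i(n,N)=n+N$, $q_j(n,N)=n$: the counting argument does not apply, since every $n\in[1,N]$ contributes the same term $a(|E(N)|/3)$, and one must rely on the sharper asymptotic $u\,a(u)\to 0$ (and not just summability of $a$) to absorb the factor $N$ out front.
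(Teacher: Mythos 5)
Your proof is correct and takes essentially the same route as the paper: apply Proposition~\ref{1.3.14} with singleton blocks to bound each $|\bbE F_{n,N}|$ by $a$ evaluated at (a third of) the minimal spacing among the indices $q_i(n,N)$, and then control the sum over $n$ by a counting argument. Where the two differ is in the counting step. The paper asserts that for each $u\geq 0$ there are at most $2\ell^2 k^*$ natural $n$ with $\del_N(n,n):=\min_{i\ne j}|q_i(n,N)-q_j(n,N)|=u$ and concludes immediately with the bound $2\ell^2 k^*\sum_u a(u)$. You have correctly noticed that this counting claim is not always valid: if some pairwise difference $q_i-q_j$ depends only on $N$ (as in $q_i(n,N)=n+N$, $q_j(n,N)=n$, which the paper permits since its standing assumption only excludes \emph{constant} differences), then for that $N$ every $n\in[1,N]$ may realise the same minimal spacing, defeating the $2\ell^2 k^*$ bound. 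Your two-case split --- counting the at most $2\deg_n D_{ij}$ solutions when $D_{ij}$ has positive $n$-degree, and invoking $u\,a(u)\to 0$ (genuinely sharper than $\sum_u a(u)<\infty$, but a consequence of it since $a$ is non-increasing) when $D_{ij}=E(N)$ --- is exactly what is needed to close this gap. Your remark that the residual contribution from $B_N$ and from the finitely many exceptional $N$ is absorbed by the uniform moment bound (\ref{B bound}) completes the argument, so the lemma is correct as stated even though the paper's proof, as written, overlooks the case you flag.
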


\begin{proof}
Fix $N\geq1$ and $u\geq0$ and set $k^*=\max\{\deg q_i:\,1\leq i\leq\ell\}$. Then there exists at most $2\ell^2 k^*$ number of natural $n$'s so that 
\[
\del_N(n,n):=\min_{1\leq i\not=j\leq \ell}|q_i(n,N)-q_j(n,N)|=u.
\]
Let $1\leq n\leq N$ be so that $\del_N(n,n)=u$.
Then there exists  a permutation $\sig$ of $\{1,...,\ell\}$ so that 
\[
q_{\sig(i)}(n,N)+u\leq q_{\sig(i+1)}(n,N)\,\,\text{ for any }\,\,i=1,2,...,\ell-1.
\] 
Set $U_i=\xi_{\sig(i)}$ and $H(x_1,...x_\ell)=F(x_{\sig^{-1}(1)},...,x_{\sig^{-1}(\ell)})$. 
Applying Proposition \ref{1.3.14}  with the function $H$ and the random vectors $U_i$, when $\del_N(n,n)=u$ then
\[
|\bbE F_{n,N}|\leq R_0\big((\phi(u/3))^{1-\frac 1b}+(\beta_q(u/3))^\ka\big)\leq R_1a(u)
\]
where $R_0$ and $R_1$ are some constants and we also used (\ref{F bar}). Note that, in the terminology of  Proposition \ref{1.3.14}, we used the partition of the index set $\{1,...,\ell\}$ into points: $\cC_i=\{i\}$.
We conclude that
\[
\sum_{n=1}^N\big|\bbE F_{n,N}\big|\leq 2\ell^2 k^*\sum_{u=0}^\infty a(u):=C<\infty
\]
and the proof of the lemma is complete.
\end{proof}

Now we will show that the variance of $S_N$ grows at most linearly fast in $N$. In fact, we will prove the following
\begin{lemma}\label{VarLemma}
Suppose that Assumption \ref{Moment Ass} holds true and that
$\sum_{l=0}^\infty a(l)<\infty$. Then there exists a constant $C>0$ so that for any positive integers $n_1<n_2<...<n_M$ and $N\in\bbN$ we have
\[
\bbE(\sum_{k=1}^M F_{n_k,N})^2\leq CM
\]
where for each random variable $Z$ we set $\bbE Z^2=\bbE[Z^2]$.
\end{lemma}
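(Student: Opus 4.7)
The plan is to expand the square and separate diagonal from off-diagonal contributions. Writing
\[
\bbE\Bigl(\sum_{k=1}^M F_{n_k,N}\Bigr)^2 = \sum_{i=1}^M \bbE F_{n_i,N}^2 + \sum_{i \neq j} \bbE[F_{n_i,N} F_{n_j,N}],
\]
the diagonal is dispatched using (\ref{B bound}): $\bbE F_{n_i,N}^2 \leq B^2$, giving a total of $B^2 M$. Each off-diagonal term is further written as $\text{Cov}(F_{n_i,N}, F_{n_j,N}) + \bbE[F_{n_i,N}] \cdot \bbE[F_{n_j,N}]$. The proof of Lemma \ref{expectation lemma} in fact bounds $\sum_n |\bbE F_{n,N}|$ over all positive integers $n$ (the polynomial counting step $|\{n : \del_N(n,n) = u\}| \leq 2\ell^2 k^*$ does not rely on $n \leq N$), so $\bigl(\sum_i |\bbE F_{n_i,N}|\bigr)^2 \leq C_0^2$ uniformly, which handles the product-of-means contribution.

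The heart of the proof is a covariance bound of the form $|\text{Cov}(F_{n_i,N}, F_{n_j,N})| \leq C_1 \, a(\del(n_i, n_j))$, where
\[
\del(n_i, n_j) := \min_{1 \leq k, k' \leq \ell} |q_k(n_i,N) - q_{k'}(n_j,N)|
\]
is the cross-separation. I would argue as follows. Sort the $2\ell$ indices $\{q_k(n_i,N)\}_k \cup \{q_k(n_j,N)\}_k$ (as a multiset) in non-decreasing order and greedily group consecutive entries at mutual distance $\leq [\del/3]$ into clusters. Since any $n_i$-index and any $n_j$-index are at distance at least $\del$, each resulting cluster is pure (contains only $n_i$- or only $n_j$-type indices). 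Take each cluster's $\xi$-vector as one of the random vectors $U_s$ in Proposition \ref{1.3.14}, with partition $\cC_1 = \{s : \text{cluster } s \text{ is } n_i\text{-type}\}$ and $\cC_2 = \{s : n_j\text{-type}\}$. Since within-block joint distributions are preserved when passing to independent copies, the decoupled expectation is exactly $\bbE F_{n_i,N} \cdot \bbE F_{n_j,N}$, so $\cD(H)$ equals $|\text{Cov}(F_{n_i,N}, F_{n_j,N})|$. All inter-cluster gaps exceed $[\del/3]$, so each $r_s \geq [\del/9]$, and $\Lambda_{w,\ka}$ from Proposition \ref{1.3.14} is bounded by a constant multiple of $a(\del)$, yielding the claimed covariance bound.

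The final step is a polynomial counting argument showing $\sum_{j \neq i} a(\del(n_i, n_j)) = O(1)$. For fixed $n_i$, $N$, and $u \geq 0$, if $\del(n_i, n_j) = u$ then $q_{k'}(n_j,N) = q_k(n_i,N) \pm u$ for some $k, k'$; for each such equation, $q_{k'}(\cdot, N)$ is a polynomial in $n_j$ whose leading coefficient is a polynomial in $N$ with non-negative integer coefficients, not identically zero (since $q_{k'}$ depends on $n$), hence strictly positive for every $N \geq 1$. Thus the equation has at most $k^* := \max_k \deg q_k$ integer solutions in $n_j$, and summing over the $2\ell^2$ choices of $(k, k', \pm)$ gives $|\{j : \del(n_i, n_j) = u\}| \leq 2\ell^2 k^*$ uniformly in $i, N, u$. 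Therefore
\[
\sum_{j \neq i} |\text{Cov}(F_{n_i,N}, F_{n_j,N})| \leq 2\ell^2 k^* \cdot C_1 \sum_{u \geq 0} a(u) =: C_2,
\]
using summability of $a$. Combining, $\bbE\bigl(\sum_k F_{n_k,N}\bigr)^2 \leq (B^2 + C_2) M + C_0^2 = O(M)$. The main technical obstacle is the covariance bound in the previous paragraph: one must verify that the cluster-based application of Proposition \ref{1.3.14} handles possibly interlaced index sets correctly, and that the H\"older parameters of the product $H = F_{n_i,N} F_{n_j,N}$ remain compatible with the moment conditions of Assumption \ref{Moment Ass} (this may require working with $w/2$ in place of $w$ when invoking Proposition \ref{1.3.14}).
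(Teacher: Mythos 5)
Your proof is correct and follows the paper's argument step for step: the same reduction to bounding the variance via Lemma \ref{expectation lemma}, the same decomposition of $\{q_k(n_i,N)\}_k\cup\{q_k(n_j,N)\}_k$ into pure blocks separated by at least a fixed fraction of the cross-separation $d_N(n_i,n_j)$, the same application of Proposition \ref{1.3.14} yielding $|\text{Cov}(F_{n_i,N},F_{n_j,N})|\le C\,a(d_N(n_i,n_j))$, and the same uniform polynomial bound on $|\{m: d_N(n,m)=l\}|$. Your closing concern about the moment parameter is well-founded but already absorbed in the hypotheses: the exponent $1-\tfrac{2}{w}$ in the definition of $a(l)$ is exactly what Proposition \ref{1.3.14} produces when applied to the product $H=F_{n_i,N}\cdot F_{n_j,N}$ (which effectively halves $w$), so the argument closes with no gap.
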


\begin{proof}
Fix some $N$ and let $n_1<...<n_M$ be positive integers.
Using Lemma \ref{expectation lemma}, it is enough to show that the variance of the sum 
$\sum_{k=1}^MF_{n_k,N}$ is bounded by $CM$ for some constant $C$ which does not depend on $N$ and the choice of $n_i$.
For each $n$ and $m$ set
\begin{equation}\label{d N def}
d_N(n,m)=\min_{1\leq i,j\leq\ell}|q_i(n,N)-q_j(m,N)|.
\end{equation}
For each $l\geq0$ set 
\[
\Gam_{n,N,l}=\{m:\,\,d_N(n,m)=l\}.
\]
Then, since each $q_i(x,N)$ is a  polynomial function of the variable $x$ (for any fixed $N$), whose degree is bounded by some constant which does not depend on $N$, we have 
\[
|\Gam_{n,N,l}|\leq A
\]
for some constant $A$ which does not depend on $n,N$ and $l$. Here $|\Gam_{n,N,l}|$ denotes the cardinality of the set $\Gam_{n,N,l}$. Now we can write
\begin{equation}\label{Var1}
\text{Var}\big(\sum_{k=1}^MF_{n_k,N}\big)=
\sum_{k=1}^M\sum_{l=0}^\infty\sum_{n_s\in\Gam_{n_k,N,l}\cap[1,N]}\text{Cov}\big(F_{n_k,N},F_{n_s,N}\big).
\end{equation}
Let  $n,m\in\bbN$ and $l\geq0$ be so that $d_N(n,m)=l$, and consider the sets 
\[
\Gam_1=\Gam_1(n)=\{q_i(n,N):\,1\leq i\leq\ell\}\,\text{ and }\,\,\Gam_2=\Gam_1(m)=\{q_i(m,N):\,1\leq i\leq\ell\}.
\]
Then, there exist disjoint sets $Q_1,...,Q_L$, $L\leq 2\ell+1$ so that 
\begin{equation}\label{Qi's}
\Gam:=\Gam_1\cup\Gam_2=\bigcup_{i=1}^{L}Q_i,
\end{equation}
each one of the $Q_i$'s is contained in either $\Gam_1$ or $\Gam_2$ and for any $q_i\in Q_i$ and $q_{i+1}\in Q_{i+1}$, $i=1,2,..,L-1$ we have
\[
q_i+l\leq q_{i+1}.
\]
Consider now the random vectors $U_1,...,U_L$ given by 
\[
U_i=\{\xi_{j}:\,j\in Q_i\}.
\]
Consider the partition $\{\cC_1,\cC_2\}$ of the index set $\{1,...,L\}$ given by
\[
\cC_i=\bigcup_{j:Q_j\subset\Gam_i}Q_j,\,\,i=1,2.
\]
Then $F_{n,N}$ is a function of $\{U_j:\,j\in\cC_1\}$ and $F_{m,N}$ is a function of $\{U_j:\,j\in\cC_2\}$.
Applying Proposition \ref{1.3.14}  with the function $H(x,y)=F(x)F(y)$ we obtain that for any $n=n_k$, $l\geq0$ and $m=n_s\in\Gam_{n_k,N,l}$,
\[
|\text{Cov}\big(F_{n_k}, F_{n_s}\big)|\leq R_0\big((\phi(l/3))^{1-\frac 2b}+(\beta_q(l/3))^\ka\big)=R_0a(l)
\]
where $R_0>0$ is some constant. Therefore, using (\ref{Var1}), we obtain that 
\begin{equation*}
\text{Var}\Big(\sum_{k=1}^MF_{n_k,N}\Big)\leq \sum_{k=1}^M\sum_{l=0}^\infty\sum_{n_s\in\Gam_{n_k,N,l}\cap[1,N]}|\text{Cov}(F_{n_k,N},F_{n_s,N})|\leq \big(AR_0\sum_{l=0}^\infty a(l)\big)M
\end{equation*}
and the proof of the lemma is complete.
\end{proof}

\subsection{Fourth moments and the strong law of large numbers}
We will prove here the following 
\begin{lemma}\label{4 lemma}
Set $b(l)=(\phi(l))^{1-\frac 4w}+(\beta_q(l))^\ka$ and assume that
$\sum_{l=1}^\infty lb(l)<\infty$. Then there exists a constant $C>0$ so that for any positive integers $n_1<n_2<...<n_M$ and $N\in\bbN$ we have
\[
\bbE\big(\sum_{k=1}^MF_{n_k,N}\big)^4\leq CM^2.
\]
\end{lemma}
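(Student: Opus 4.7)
The plan is to adapt the argument of Lemma \ref{VarLemma} to fourth moments, combining two applications of Proposition \ref{1.3.14} with a counting argument tailored to the hypothesis $\sum_l lb(l)<\infty$.

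\textbf{Reduction to centered summands.} By Lemma \ref{expectation lemma}, $|\sum_{k=1}^M \bbE F_{n_k,N}| \leq C$, so the $L^4$ triangle inequality reduces the problem to bounding $\bbE(\sum_k \bar F_{n_k,N})^4$ where $\bar F_{n_k,N} = F_{n_k,N} - \bbE F_{n_k,N}$. Expanding the fourth power, the contribution from quadruples with repeated indices is $O(M^2)$: writing e.g.\ $\bar F_k^2 = (\bar F_k^2 - \bbE \bar F_k^2) + \bbE \bar F_k^2$, the product-of-expectations part is bounded by $(\sum_k \bbE\bar F_k^2)(\sum_{k',k''}|\text{Cov}(\bar F_{k'},\bar F_{k''})|) = O(M^2)$ via Lemma \ref{VarLemma}, while the centered triple-type remainder is $O(M)$ by the counting argument below applied to sorted triples. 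By permutation symmetry, one then reduces to summing $4!$ times over sorted quadruples $n_{k_1}<n_{k_2}<n_{k_3}<n_{k_4}$.

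\textbf{Key mixing bound.} For each sorted quadruple, let $L_j = d_N(n_{k_j},n_{k_{j+1}})$ for $j=1,2,3$. Apply Proposition \ref{1.3.14} with the $2$-block partition $\cC_1=\{1\}$, $\cC_2=\{2,3,4\}$, treating $H = \prod_i \bar F_{n_{k_i},N}$ as a function of the four $\ell$-tuples $U_i = (\xi_{q_j(n_{k_i},N)})_{j=1}^\ell$. Since the decoupled expectation factors as $\bbE \bar F_{n_{k_1},N}(\cdot) \cdot \bbE \prod_{i=2}^4 \bar F_{n_{k_i},N}(\cdot)$ with $\bbE \bar F_{n_{k_1},N} = 0$, the main term vanishes and one gets
\[
\Big|\bbE\prod_{i=1}^4 \bar F_{n_{k_i},N}\Big| \leq C\, b(L_1/3).
\]
The symmetric partition $\cC_1=\{1,2,3\}$, $\cC_2=\{4\}$ gives the analogous bound $C\,b(L_3/3)$. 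Using that $b$ is non-increasing, combining yields
\[
\Big|\bbE\prod_{i=1}^4 \bar F_{n_{k_i},N}\Big| \leq C \min\bigl(b(L_1/3),\,b(L_3/3)\bigr) = C\, b\bigl(\max(L_1,L_3)/3\bigr).
\]

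\textbf{Counting.} By the reversal symmetry $(k_1,k_2,k_3,k_4) \leftrightarrow (k_4,k_3,k_2,k_1)$,
\[
\sum_{\text{sorted}} b\bigl(\max(L_1,L_3)/3\bigr) \leq 2\sum_{\text{sorted},\, L_1\geq L_3} b(L_1/3).
\]
For each $l=L_1$, count sorted tuples with $L_1=l$ and $L_3\leq l$: $k_1$ has $\leq M$ choices; $k_2>k_1$ with $d_N(n_{k_1},n_{k_2})=l$ has at most $A$ choices (where $A$ is the polynomial-structure constant of Lemma \ref{VarLemma}); $k_3>k_2$ is entirely free, $\leq M$ choices; $k_4>k_3$ with $d_N(n_{k_3},n_{k_4})\leq l$ has at most $A(l+1)$ choices. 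Thus the total is $\leq A^2 M^2(l+1)$, so
\[
\sum_l A^2 M^2(l+1) b(l/3) \leq C M^2 \sum_l (l+1) b(l) = O(M^2)
\]
by $\sum lb(l)<\infty$. Assembling all pieces gives the claim.

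\textbf{Main obstacle.} The delicacy lies entirely in the counting. A naive $(2,2)$ split at the middle gap $L_2$ would leave $k_1$ constrained to within $L_1\leq L_2$ of $k_2$ \emph{and} $k_4$ constrained to within $L_3\leq L_2$ of $k_3$, producing a factor $(l+1)^2$ and requiring the strictly stronger hypothesis $\sum l^2 b(l)<\infty$. The combined $(1,3)$ and $(3,1)$ splits, whose minimum bound depends only on $\max(L_1,L_3)$, leave the \emph{middle} index $k_3$ free and recover the single factor of $(l+1)$ needed to match $\sum lb(l)<\infty$.
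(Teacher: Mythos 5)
There is a genuine gap in the ``Key mixing bound'' step. You define $L_j = d_N(n_{k_j},n_{k_{j+1}})$ as the $d_N$-distance between \emph{consecutive} indices in the sorted order of $n$'s, and then claim that Proposition \ref{1.3.14} with the split $\cC_1=\{1\}$, $\cC_2=\{2,3,4\}$ yields $\big|\bbE\prod_{i=1}^4\bar F_{n_{k_i},N}\big|\le C\,b(L_1/3)$. But the $\ell$-tuples $U_i=(\xi_{q_j(n_{k_i},N)})_j$ are not ordered, disjoint blocks: the value-sets $\Gam_{n_{k_i},N}=\{q_j(n_{k_i},N)\}_j$ can interleave arbitrarily even when $n_{k_1}<\dots<n_{k_4}$. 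To apply Proposition \ref{1.3.14} one must first decompose $\bigcup_i\Gam_{n_{k_i},N}$ into ordered sub-blocks $Q_1<Q_2<\dots$ (as the paper does in Lemmas \ref{VarLemma} and \ref{4 lemma}), and the resulting bound depends on $\mathrm{dist}\big(\Gam_{n_{k_1},N},\bigcup_{j\ge2}\Gam_{n_{k_j},N}\big)$, \emph{not} on $d_N(n_{k_1},n_{k_2})$. These two quantities are genuinely different, and your bound is false. Concretely, with $q_1(n)=n$, $q_2(n)=Kn$, take $n_{k_1}=a$, $n_{k_2}=a+G$, $n_{k_3}=Ka$, $n_{k_4}=K(a+G)$ with $G\approx(K-1)a/2$: the quadruple is sorted, $L_1=d_N(n_{k_1},n_{k_2})\approx(K-1)a/2$ and $L_3=d_N(n_{k_3},n_{k_4})\approx K(K-1)a/2$ are both large, yet $\Gam_{n_{k_1}}\cap\Gam_{n_{k_3}}=\{Ka\}$ and $\Gam_{n_{k_2}}\cap\Gam_{n_{k_4}}=\{K(a+G)\}$, so already in the i.i.d.\ case $\bbE\prod_i\bar F_{n_{k_i},N}=\bbE[\bar F_{n_{k_1}}\bar F_{n_{k_3}}]\cdot\bbE[\bar F_{n_{k_2}}\bar F_{n_{k_4}}]$ stays bounded away from zero while $b(\max(L_1,L_3)/3)\to 0$. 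Your counting step then bounds the wrong sum.

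The paper avoids this by working with a $(2,2)$ split and measuring proximity through the actual set distances: it sets $k_1=\mathrm{dist}(\Gam_{u_1,N}\cup\Gam_{u_2,N},\,\Gam_{v_1,N}\cup\Gam_{v_2,N})$ and a carefully defined secondary distance $k_2$, proves $|b_N(u_1,u_2,v_1,v_2)|\le R_0\,\tau(\max(k_1,k_2))$ by a two-case application of Proposition \ref{1.3.14} (isolating one $v$ when $k_1<k_2$), and observes that for fixed $(u_1,u_2,k_1,k_2)$ there are at most a constant number of admissible $(v_1,v_2)$. This yields the $M^2\sum_k(k+1)b(k)$ bound directly. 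In spirit your $(1,3)$/$(3,1)$ trick could be made to work if you replace $L_1$ by $\mathrm{dist}\big(\Gam_{n_{k_1}},\bigcup_{j>1}\Gam_{n_{k_j}}\big)$ and similarly for $L_3$, but the counting would then become as delicate as the paper's and would no longer reduce to counting ``$k_4$ within $A(l+1)$ choices of $k_3$''; you would need to re-derive essentially the paper's combinatorics.
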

Relying on Lemma \ref{4 lemma} and  using the Markov inequality, we obtain that 
\[
P\big\{N^{-1}|S_N|\geq N^{-1/8}\big\}\leq\frac{\|S_N\|^4_{L^4}}{N^{7/2}}\leq C N^{-3/2}
\]
which together with the Borel-Cantelli lemma yields that with probability one we have
\[
\lim_{N\to\infty}\frac1{N}S_N=0=\bar F.
\]

\begin{proof}[Proof of Lemma \ref{4 lemma}]
Relying on Lemmas \ref{expectation lemma} and \ref{VarLemma}, it is enough to prove that 
\[
\text{Var}\big((\sum_{k=1}^M \overline{F_{n_k,N}})^2\big)\leq CM
\]
for some $C$ which do not depend on $N$ and the choice of $n_1,...,n_M$, where $\bar X=X-\bbE X$ for any random variable $X$. For any $u_1,u_2,v_1,v_2$ set 
\[
b_N(u_1,u_2,v_1,v_2)=\text{Cov}(\overline{F_{v_1,N}}\cdot\overline{F_{v_2,N}},\overline{F_{u_1,N}}\cdot\overline{F_{u_2,N}}).
\]
For each $u$ and $N$ set 
\[
\Gam_{u,N}=\{q_i(u,N):\,1\leq i\leq\ell\}.
\]
For any two sets $A_1,A_2\subset\bbR$ set $\text{dist}(A_1,A_2)=\inf\{|a_1-a_2|:\,a_i\in A_i\}$.
Since the $q_i$'s are polynomials, there exists a constant $A>0$, which does not depend on $N$, so that 
for each nonnegative integers $k_1$ and $k_2$ and positive integers $u_1$ and  $u_2$, there exist at most $A$ pairs $(v_1,v_2)$ of positive integers such that 
\[
k_1=\text{dist}(\Gam_{u_1,N}\cup\Gam_{u_2,N},\Gam_{v_1,N}\cup\Gam_{v_2,N})
\]
and, if $k_1=\text{dist}(\Gam_{u_1,N}\cup\Gam_{u_2,N},\Gam_{v_1,N})$ then 
\[
k_2=\text{dist}(\Gam_{u_1,N}\cup\Gam_{u_2,N}\cup\Gam_{v_1,N},\Gam_{v_2,N})
\]
while when $k_1>\text{dist}(\Gam_{u_1,N}\cup\Gam_{u_2,N},\Gam_{v_1,N})$ (and so $k_1=\text{dist}(\Gam_{u_1,N}\cup\Gam_{u_2,N},\Gam_{v_2,N})$) then 
\[
k_2=\text{dist}(\Gam_{u_1,N}\cup\Gam_{u_2,N}\cup\Gam_{v_2,N},\Gam_{v_1,N}).
\]
Let us denote the set of all these indexes $(v_1,v_2)$ by $\Del(u_1,u_2,k_1,k_2,N)$. Note that given $N,u_1,u_2,v_1,v_2$ there exist  $k_1$ and $k_2$ so that $(v_1,v_2)\in\Del(u_1,u_2,k_1,k_2,N)$. 
Next, we claim that for any $u_1,u_2$ and $(v_1,v_2)\in\Del(u_1,u_2,k_1,k_2,N)$ we have
\begin{equation}\label{b est}
|b_N(u_1,u_2,v_1,v_2)|\leq R_0\tau(\max(k_1,k_2))
\end{equation}
where $\tau(l)=b(l/3)$, $R_0$ is some constant and $b(\cdot)$ was defined in the statement of the lemma. 
Indeed, first consider the case when $k_1\geq k_2$, and set $\Del_1=\Gam_{u_1,N}\cup\Gam_{u_2,N}$ and $\Del_2=\Gam_{v_1,N}\cup\Gam_{v_2,N}$.
Then we can write
\[
\Del_1\cup \Del_2=\bigcup_{i=1}^LQ_i,\,L\leq 4\ell+4
\]
where each $Q_i$ is subsets of either $\Del_1$ or $\Del_2$, and the $Q_i$'s satisfy all the conditions appearing right after (\ref{Qi's}) with $k_1$ in place of $l$.  Consider the random vectors $U_1,...,U_L$ given by $U_i=\{\xi_{j}:\,j\in Q_i\}$
and  the partition $\{\cC_1,\cC_2\}$ of the index set $\{1,...,L\}$ given by
\[
\cC_i=\bigcup_{j:Q_j\subset\Gam_i}Q_j,\,\,i=1,2.
\]
Then $\overline{F_{u_1,N}}\cdot\overline{F_{u_2,N}}$ is a function of $\{U_j:\, j\in\cC_1\}$ and 
 $\overline{F_{v_1,N}}\cdot\overline{F_{v_2,N}}$ is a function of $\{U_j:\, j\in\cC_2\}$. 
Applying  Proposition \ref{1.3.14}  with the function $H(x,y,z,w)=F(x)F(y)F(z)F(w)$ we obtain
that 
\begin{equation*}
|b_N(u_1,u_2,v_1,v_2)|\leq R_0\tau(k_1)
\end{equation*}
for some $R_0$, which completes the proof of (\ref{b est}) in the above case.

Next, consider the case when $k_1<k_2$ and  $k_2=\text{dist}(\Gam_{u_1,N}\cup\Gam_{u_2,N}\cup\Gam_{v_1,N},\Gam_{v_2,N})$. Set  
$\Del_1=\Gam_{u_1,N}\cup\Gam_{u_2,N}\cup\Gam_{v_1,N}$ and $\Del_2=\Gam_{v_2,N}$. Then $\text{dist}(\Del_1,\Del_2)=k_2$.
As in the previous case, applying  Proposition \ref{1.3.14}  yields that 
\[
\left|\bbE[\overline{F_{u_1,N}}\cdot\overline{F_{u_2,N}}\cdot\overline{F_{v_1,N}}\cdot\overline{F_{v_2,N}}]
-\bbE[\overline{F_{u_1,N}}\cdot\overline{F_{u_2,N}}\cdot\overline{F_{v_1,N}}]\cdot\bbE[\overline{F_{v_2,N}}]\right|\leq R_1\tau(k_2)
\]
for some constant $R_1$. An additional application of this corollary yields
\[
\left|\bbE[\overline{F_{v_1,N}}\cdot\overline{F_{v_2,N}}]
-\bbE[\overline{F_{v_1,N}}]\cdot\bbE[\overline{F_{v_2,N}}]\right|\leq R_2a(k_2)\leq R_2\tau(k_2)
\]
where $a(\cdot)$ was defined at the beginning of this section and $R_2$ is some constant. Taking into account that $\bbE[\bar X]=0$ for any random variable $X$, combining the above estimates with (\ref{B bound}) we obtain that 
\begin{equation*}
|b_N(u_1,u_2,v_1,v_2)|\leq R_0\tau(k_2)
\end{equation*}
for some constant $R_0>0$.
The proof of (\ref{b est}) in the case when  $k_1<k_2$ and  $k_2=\text{dist}(\Gam_{u_1,N}\cup\Gam_{u_2,N},\Gam_{v_1,N})$ proceed exactly in the same way by changing the roles for $v_1$ and $v_2$.

Finally, applying (\ref{b est}) we obtain that 
\begin{eqnarray*}
\text{Var}\big((\sum_{k=1}^M \overline{F_{n_k,N}})^2\big)=\sum_{1\leq i,j\leq M}\sum_{k_1,k_2=0}^{\infty}\,\,
\sum_{(n_{i_1},n_{j_1})\in\Del(n_i,n_j,k_1,k_2,N)}b_N(n_i,n_j,n_{i_1},n_{i_2})\\\leq 
A\sum_{1\leq i,j\leq M}\sum_{k_2=0}^\infty\sum_{k_1=0}^{k_2}\tau(k_2)\\+
A\sum_{1\leq i,j\leq M}\sum_{k_1=0}^\infty\sum_{k_2=0}^{k_1}\tau(k_1)=
2R_0AM^2\sum_{k}(k+1)b(k)\leq CM^2
\end{eqnarray*}
where $C$ is some constant, and the proof of the lemma is complete.
\end{proof}

\begin{remark}
Relying on conditional expectation type estimates, it is possible to prove Lemma \ref{4 lemma} similarly to Chapter 3 in \cite{book}, using the functions $F_{\ve,i}$ defined in Section \ref{Var Sec}. Moreover, it is also possible to derive this lemma using the method of cumulants, similarly to Section 6 in \cite{HafMD}. In fact, the arguments leading to the comulants estimates obtained in \cite{HafMD} can be modified to the setup of this paper, which means that we can also obtain moderate deviations theorems and some concentration inequalities for the sums $S_N$.
\end{remark}

\section{Limiting covariances}\label{Var Sec}


\subsection{Ordering and decomposition}


Consider the homogeneous decomposition
\begin{equation}\label{Hom decomp}
q_i(n,N)=\sum_{j=0}^kN^jQ_{i,j}(n/N)
\end{equation}
where for each $i$ and $j$ the polynomial $Q_{i,j}$ is of degree not exceeding $j$. 
Let $r_0$ be so for any $1\leq i,j\leq\ell$ with $\deg q_i>\deg q_j$, for any sufficiently large $N$ we have
\begin{equation}\label{Different degree ordering}
\min_{N^{r_0}\leq n,m\leq N}\big(q_i(n,N)-q_j(m,N)\big)\geq N .
\end{equation}

We will first prove the following
\begin{proposition}\label{Prop Ord 2}
There exist constants $r_1\in(0,1)$, $c>0$ and $A,B>0$, sets $B_N,\,N\geq1$ containing $[1,N^{r_0}]$ so 
 that $|B_N|\leq AN^{r_1}$ and disjoint sets $I_\ve(N)$ of the form
\begin{equation}\label{I form}
I_\ve(N)=\bigcup_{j=1}^{l_\ve}\big(a_{j,\ve}N+BN^{r_1},b_{j,\ve}N-N^{r_1}\big) \subset[1,N]
\end{equation}
 whose union cover $[1,N]\setminus B_N$, where $\ve$ ranges over all the permutations of $\{1,...,\ell\}$ and the sets  $(a_{j,\ve}N,b_{j,\ve}N)$'s are disjoint, so that for any sufficiently large $N$ and $n\in I_\ve(N)$ with $n\geq N^r$ we have 
\begin{equation}\label{Ord}
q_{\ve(1)}(n,N)<q_{\ve(2)}(n,N)<...<q_{\ve(\ell)}(n,N)
\end{equation}
and when $q_{\ve(i+1)}$ and $q_{\ve(i)}$ have the same non-linear degree then  
\begin{equation}\label{Same degree ordering}
q_{\ve(i+1)}(n,N)\geq q_{\ve(i)}(n,N)+cN^{\frac12}.
\end{equation}
\end{proposition}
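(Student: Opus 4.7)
The plan is to identify a finite set $\Lambda\subset[0,1]$ of ``critical'' values of $y=n/N$ where the ordering of $(q_1(n,N),\ldots,q_\ell(n,N))$ can change, and to build each $I_\ve(N)$ from the open intervals of $[0,1]\setminus\Lambda$ on which the ordering is the prescribed permutation $\ve$, shrunk by cushions of width $O(N^{r_1})$ at the endpoints.

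First I dispose of pairs of different degrees: for $(i,j)$ with $\deg q_i<\deg q_j$, assumption (\ref{Different degree ordering}) already fixes the ordering once $n\geq N^{r_0}$ and gives a gap of size at least $N$, so I put $[1,N^{r_0}]$ in $B_N$. For a same-degree nonlinear pair $\deg q_i=\deg q_j=k\geq 2$, the decomposition (\ref{Hom decomp}) gives
\[
q_i(n,N)-q_j(n,N)=\sum_{u=0}^{k}N^{u}P_{ij,u}(n/N),\qquad P_{ij,u}:=Q_{i,u}-Q_{j,u},
\]
and I let $k_{ij}$ be the largest index with $P_{ij,k_{ij}}\not\equiv 0$, which exists since the standing hypothesis forbids constant differences $q_i-q_j$; the univariate polynomial $P_{ij,k_{ij}}$ has only finitely many roots in $[0,1]$. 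For a linear pair the sign of $q_i-q_j$ changes only at $y_{ij}=(b_j-b_i)/(a_i-a_j)$ (when $a_i\neq a_j$), and when nonzero the difference is of order $N$. Let $\Lambda\subset[0,1]$ be the union, over all pairs, of the roots of $P_{ij,k_{ij}}$, the linear breakpoints $y_{ij}$, and the endpoints $\{0,1\}$.

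Order $\Lambda$ as $0=y_0<y_1<\cdots<y_T=1$. On each open interval $(y_t,y_{t+1})$ all the differences $q_i(yN,N)-q_j(yN,N)$ have constant sign for $N$ large, giving a permutation $\ve_t$ with $q_{\ve_t(1)}(yN,N)<\cdots<q_{\ve_t(\ell)}(yN,N)$. Setting $k=\max_i\deg q_i$, pick $r_1\in(1-\tfrac{1}{2k},1)$ with $r_1\geq r_0$ (which only strengthens (\ref{Different degree ordering})), and define
\[
I_\ve(N)=\bigcup_{t:\,\ve_t=\ve}\bigl(y_tN+BN^{r_1},\;y_{t+1}N-BN^{r_1}\bigr),
\]
with $B$ to be chosen below; letting $B_N$ be the complement of $\bigcup_\ve I_\ve(N)$ in $[1,N]$, since $|\Lambda|=O(1)$ one has $|B_N|=O(N^{r_0})+O(N^{r_1})\leq AN^{r_1}$.

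The crux is verifying the quantitative separation (\ref{Same degree ordering}). Near a root $y_0$ of $P_{ij,k_{ij}}$ of multiplicity $m\leq k_{ij}\leq k$, a Taylor expansion gives $|P_{ij,k_{ij}}(y)|\asymp|y-y_0|^{m}$ in a fixed neighborhood of $y_0$, so for $|n-y_0N|\geq BN^{r_1}$,
\[
N^{k_{ij}}\bigl|P_{ij,k_{ij}}(n/N)\bigr|\;\geq\;c_0B^{m}N^{k_{ij}-m(1-r_1)}\;\geq\;c_0B^{m}N^{k_{ij}-1/2},
\]
using $m(1-r_1)\leq k\cdot\tfrac{1}{2k}=\tfrac12$. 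Since the remainder $\sum_{u<k_{ij}}N^{u}P_{ij,u}(n/N)$ is $O(N^{k_{ij}-1})$, choosing $B$ large enough (uniformly over the finitely many pairs) makes the leading term dominate the remainder by a factor of order $N^{1/2}$, yielding $|q_i(n,N)-q_j(n,N)|\geq c'B^{m}N^{k_{ij}-1/2}\geq cN^{1/2}$; far from every root the leading term has order $N^{k_{ij}}\geq N^{2}$, which is even larger. The main obstacle is calibrating a single pair $(r_1,B)$ that works simultaneously for all pairs $(i,j)$ and all root multiplicities; the worst case $m=k_{ij}=k$ is what forces $r_1>1-1/k$, and $k_{ij}\geq 2$ is what gives a separation of size at least $N^{1/2}$.
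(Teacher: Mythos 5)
Your proof takes a genuinely different and arguably cleaner route than the paper. The paper argues by induction on the maximal degree: the linear base case is imported from Chapter 3 of \cite{book}, and the inductive step groups the top-degree polynomials by their leading homogeneous part $Q_{i,d+1}$, handles cross-group orderings via the roots of the leading differences, subtracts equal leading parts within a group to reduce the degree, and recurses, finally merging the resulting interval systems by collapsing endpoints that are sublinearly close. You flatten this recursion: by defining $k_{ij}$ as the first level $u$ at which $Q_{i,u}\not\equiv Q_{j,u}$, you reach in one step the level that the paper reaches only after several applications of the inductive hypothesis, and you collect all critical values of $y=n/N$ into a single finite set $\Lambda$ before building the intervals, which eliminates the merging step. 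The quantitative gap estimate via the Taylor expansion of $P_{ij,k_{ij}}$ around its roots of multiplicity $m\le k_{ij}$, with $r_1>1-\tfrac{1}{2k}$, is correct; both the lower bound $N^{k_{ij}-m(1-r_1)}\gtrsim N^{1/2}$ and the domination of the remainder $O(N^{k_{ij}-1})$ go through, and the paper's own choice of cushion $N^{-1/(2d+2)}$ in $y$-units corresponds to essentially the same $r_1$. One small overstatement at the end: $k_{ij}\ge 1$, not $k_{ij}\ge 2$, is what the $N^{1/2}$ bound requires, and $k_{ij}\ge 1$ is automatic from the standing hypothesis that the differences $q_i-q_j$ are non-constant; the arithmetic you wrote ($k_{ij}-m(1-r_1)>k_{ij}-\tfrac12\ge\tfrac12$) already shows this, so the parenthetical is a slip in exposition, not in the argument.
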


\begin{proof}
We will prove the proposition by induction on the maximal degree of the polynomials $q_1,...,q_\ell$. 
When the maximal degree is $1$, i.e. when all the polynomials are linear, then exactly as in
Chapter 3 of \cite{book} there exist a finite union of intervals of the form $(a_\ve N,b_\ve N),\,\ve\in\cE_\ell$ whose union cover $[1,N]\setminus B_N$, for some set $B_N$ whose cardinality does not exceed $2\ell^2$,
 so that (\ref{Ord}) holds true for any $n\in (a_\ve N,b_\ve N)$, and 
\[
q_{\ve(i+1)}(n,N)-q_{\ve(i)}(n,N)\geq \min(n-a_\ve N-1,B_\ve N-n-1)
\]
for any $n\in(a_\ve,b_\ve)$ and $1\leq i<\ell$. Now we can just take $r_1=\frac12$ (when all of the polynomials are linear, (\ref{Different degree ordering}) is meaningless). For readers' convenience, let us repeat the details from \cite{book}. 
Write  $q_i(n,N)=p_in+q_iN,\,
i=1,...,\ell$, and assume without loss of generality that $p_1<p_2<...<p_\ell$. Since
$p_in+q_iN\geq 0$ always then $q_i\geq 0$ for $i=1,...,\ell$. Let $\cN_N$
be the set of $n\in\{ 1,2,...,N\}$ such that all $p_in+q_iN,\, i=1,...,\ell$
are distinct. For each $n\in\cN_N$ we define distinct integers $\ve_i(n,N),
\, i=1,2,...,\ell$ such that
\begin{equation}\label{3.2.17}
p_{\ve_i(n,N)}n+q_{\ve_i(n,N)}N<p_{\ve_{i+1}(n,N)}n+q_{\ve_{i+1}(n,N)}N\,\,\,
\mbox{for all}\,\,i=1,2,...,\ell-1.
\end{equation}
For each
$\ve=(\ve_1,...,\ve_\ell)\in\cE_\ell$ set 
\[
\cN_{\ve,N}=\{ n\in\{ 1,2,...,N\}:\, \ve_j(n,N)=\ve_j\,\,\mbox{for each}\,\,
j=1,...,\ell\}.
\]
Some of the sets $\cN_{\ve,N}$ can be empty and for each $n\in\cN_{\ve,N}$,
\[
p_{\ve_i}n+q_{\ve_i}N<p_{\ve_{i+1}}n+q_{\ve_{i+1}}N,\,\,\mbox{i.e.}
\]
\[
n>(q_{\ve_i}-q_{\ve_{i+1}})(p_{\ve_{i+1}}-p_{\ve_{i}})^{-1}N\,\,\mbox{if}\,\,
\ve_{i+1}>\ve_i\,\,\mbox{and}
\]
\[
n<(q_{\ve_i}-q_{\ve_{i+1}})(p_{\ve_{i+1}}-p_{\ve_{i}})^{-1}N\,\,\mbox{if}\,\,
\ve_{i+1}<\ve_i.
\]
Hence,
\[
\cN_{\ve,N}=\{ n:\, a_\ve N<n<b_\ve N\}
\]
for some (not unique) $a_\ve\geq 0$ and $b_\ve\leq 1$, $\cN_{\ve,N}$ 
are disjoint for different $\ve\in\cE_\ell$ and, clearly,
\begin{equation}\label{3.2.18}
\cN_N=\cup_{\ve\in\cE_\ell}\cN_{\ve,N}.
\end{equation}
There is always $\ve=(\ve_1,...,\ve_\ell)$ with $a_\ve=0$ and then
$\ve_1=\min\{ i:\, q_i=\min_{1\leq j\leq\ell}q_j\}$ and $\ve_\ell=
\max\{ i:\, q_i=\max_{1\leq j\leq\ell}q_j\}$.
Set $B_N=\{ 1,2,...,N\}\setminus\cN_N$. Then
\[
B_N\subset\{ (q_{i}-q_{j})(p_{j}-p_{i})^{-1}N:\,\,\mbox{for some}
\,\, i,j=1,...,\ell,\, i\ne j\},
\]
and so the cardinality of $B_N$ does not exceed $\ell^2$ (since the polynomials are linear). In Lemma 3.2.6 in \cite{book} it was proved that if $a_\ve N<n<b_\ve N$ then
\begin{equation}\label{3.2.19}
(p_{\ve_{i+1}}-p_{\ve_i})n+(q_{\ve_{i+1}}-q_{\ve_i})N\geq\min(n-a_\ve N-1,\,
b_\ve N-n-1).
\end{equation}
The proof of (\ref{3.2.19}) is as follows. Let $m_1,m_2\geq 1$ be integers satisfying $a_\ve N<n-m_1$ and
$n+m_2<b_\ve N$. Then
\[
p_{\ve_i}(n-m_1)+q_{\ve_i}N<p_{\ve_{i+1}}(n-m_1)+q_{\ve_{i+1}}N
\]
and
\[
p_{\ve_i}(n+m_2)+q_{\ve_i}N<p_{\ve_{i+1}}(n+m_2)+q_{\ve_{i+1}}N.
\]
Hence,
\[
(p_{\ve_{i+1}}-p_{\ve_i})n+(q_{\ve_{i+1}}-q_{\ve_i})N>
m_1(p_{\ve_{i+1}}-p_{\ve_i})\geq m_1
\]
if $\ve_{i+1}>\ve_i$ and if $\ve_i>\ve_{i+1}$ then
\[
(p_{\ve_{i+1}}-p_{\ve_i})n+(q_{\ve_{i+1}}-q_{\ve_i})N>
m_2(p_{\ve_{i}}-p_{\ve_{i+1}})\geq m_2,
\]
and so the assertion follows.

Now we will make the induction step.
Suppose that the proposition holds true when the maximal degree does not exceed $d$. Let $q_1,...,q_\ell$ be polynomials so that the maximal degree equals $d+1$. Let $k>\ell$ be so  that $q_1,...,q_k$ are of degree strictly less than $d+1$, and  $\deg q_i=d+1$ for any $i>k$. By the induction hypothesis, there exist constants $r_1=r_1(H)\in(0,1)$ and $A=A_H,\,c=c_H>0$ and sets $B_N=B_{H,N},\,N\geq1$ and $I_{\ve'}(N)$ satisfying all the properties described in the statement of the proposition with the polynomials $q_1,...,q_k$, where $\ve'$ ranges over all the permutations of the set $\{1,...,k\}$.

In order to complete the induction hypothesis, it is enough to show that all the results stated in the proposition hold true for the family of polynomials $q_{k+1},...,q_{\ell}$ (because of (\ref{Different degree ordering})). 
 Indeed, assume that there exist constants $r_1=r_{1,d+1}\in(0,1)$ and $c=c_{d+1}$ and sets $B_N=B_{N,d+1}$ and $I_{\ve''}(N)$ with the properties described in the statement of Proposition \ref{Prop Ord 2} for the polynomials $q_{k+1},...,q_\ell$, where $\ve''$ ranges over all the permutations of the set $\{k+1,...,\ell\}$. Take $1>r_1>\max(r_{1}(H),r_{1,d+1},r_0)$, where $r_0$ comes from (\ref{Different degree ordering}). 
Consider all the endpoints of the intervals defining the sets $I_{\ve'}(N)$ and $I_{\ve''}(N)$ which are larger than $N^{r_1}$. For any endpoint $a$, consider the set $E_a$ of all endpoints $b$ so that $|a-b|$ is sublinear in $N$. Then we can partition the set of all endpoints to disjoint sets of the form $E_a$. By omitting all the endpoints from each partition set $E_a$ except $a$, and then considering all the intervals generated by two consecutive (remaining) endpoints we get sets $I_\ve(N)$ with the desired properties (for the polynomials $q_1,...,q_\ell$).  Note that
 for any permutation of $\{1,...,\ell\}$ which does not have the form $\ve'\otimes\ve''$ we have $I_\ve(N)=\emptyset$.

Next, consider the decompositions (\ref{Hom decomp}) of the polynomials $q_{k+1},...,q_\ell$ (whose degree is $d+1$).
let $Q_{i_1,d+1},...,Q_{i_u,d+1}$ be the distinct polynomials among the polynomials $Q_{i,d+1},\,i=k+1,...,d$.
Let $0\leq y_1<y_2<...<y_s\leq 1$ be the set of all points $y$ in $[0,1]$ so that $Q_{i_j,d+1}(y)=Q_{i_j',d+1}(y)$ for some $j\not=j'$. On each interval of the form $(y_a,y_{a+1})$ we can order the polynomials $Q_{i_j,d+1}$. In fact, since the degrees of the polynomials $Q_{i_j,d+1}$ is at most $d+1$, there exists a permutation $\sig_a$ of $\{i_1,...,i_u\}$ so that for any 
\[
y\in[y_a+N^{-\frac1{2d+2}},y_{a+1}-N^{-\frac1{2d+2}}]=:J_{a,n}
\] 
and $1\leq j<u$ we have
\[
Q_{\sig_a(i_{j+1}),d}(y)>Q_{\sig_a(i_j),d}(y)+CN^{-\frac{1}2}
\]
for some constant $C>0$. Since the degree of  $q_{i},\,i>k$ is $d+1$, it follows that for any $j$ and $n$ that when $n/N\in J_{a,N}$ we have
\[
q_{\sig_a(i_{j+1})}(n,N)-q_{\sig_a(i_{j})}(n,N)\geq C_1N^{d+1-\frac12}
\]
where $C_1>0$ is some constant. In the case when all of the $Q_{i,d+1}$'s are distinct, we have completed the induction step. 
Otherwise, set $\Gam_j=\{i:Q_{i,d+1}=Q_{i_j,d+1}\}$. Then, for any $a$ and $n/N\in J_{a,N}$, $j<j'$ and $i\in \Gam_{\sig_a(j)}, i'\in\Gam_{\sig_a(j')}$ we have 
\[
q_{i'}(n,N)-q_{i}(n,N)\geq C'N^{d+1-\frac12}
\]
for some constant $C'>0$. Finally, for each $j$ and $i\in \Gam_j$ we define 
\[
\tilde q_{i}(n,N)=q_{i}(n,N)-N^{d+1}Q_{i_j,d+1}(n/N)=q_{i}(n,N)-N^{d+1}Q_{i,d+1}(n/N).
\]
Then the degrees of the polynomials $\{\tilde q_i:\,i\in\Gam_j\}$ do not exceed $d$, and so we can apply the induction hypothesis with them. By intersecting the resulting sets $I_{\ve^{(j)}}(N)$ with each interval $J_{a,N}$, where $\ve^{(j)}$ ranges over all the permutations of the set $\Gam_j$,  omitting the intervals whose length is a sublinear function of $N$
and taking a sufficiently large $r\in(0,1)$ we get disjoint sets $I_{\ve''}(N)$ which have the properties described in the statement of the proposition for the polynomials $q_{k+1},...,q_{\ell}$ (the ones with degree $d+1$), where $\ve''$ now ranges over the permutations of the set $\{k+1,...,\ell\}$.
As we have explained before, it is enough in order to complete the induction step. The proof of the proposition is complete.
 \end{proof}
 
 Let $\cE_\ell$ be the set of all permutations of the set $\{1,2,...,\ell\}$.
Let $0=s_0<s_1<s_2,...<s_{z-1}<s_{z}=\ell$ and $d_1<...<d_z$ be so that the degree of $q_{i},\,s_i<i\leq s_{i+1}$ is $d_i$, for any $i=0,1,...,z-1$. Then, as can bee seen from the proof of Proposition \ref{Prop Ord 2}, the sets $I_\ve(N)$ are not empty only when the permutation $\ve$ preserves the sets $\cD_i=\{s_i+1,...,s_{i+1}\}$, where $i=0,1,...,z-1$. In other words 
\begin{equation}\label{tenz}
\ve=\ve^{(0)}\otimes\ve^{(1)}\cdots\otimes\ve^{(z-1)}
\end{equation}
 is the tensor product  of $z$ permutations $\ve^{(i)}$ of the sets $\cD_i$, where $i=0,1,...,z-1$. We denote by $\cE_\ell'$ the set of all permutations with the above product structure.

Next, let $I_\ve(N),\,\ve\in\cE_\ell',$ be the sets from Proposition \ref{Prop Ord 2} and 
set 
\begin{eqnarray*}
G_N=\bigcup_{\ve\in\cE_\ell'}I_\ve(N),\,\, S_{1,N}=N^{-\frac12}\sum_{n\in G_N}F(\xi_{q_1(n,N)},\xi_{q_2(n,N)},...,\xi_{q_\ell(n,N)})\\\text{and }\, I_\ve(t,N)=I_\ve(N)\cap[1,Nt],\, t\in[0,1].
\end{eqnarray*}
Since the cardinality of the set $B_N=N\setminus G_N$ does not exceed $AN^r$ for some $A>0$ and $r\in(0,1)$, using Lemma \ref{VarLemma} we see that if the limits
\[
b(t,s)=\lim_{N\to\infty}\bbE[S_{1,[Nt]}S_{1,[Ns]}]
\]
exist, then we also have
$
b(t,s)=\lim_{N\to\infty}\bbE[\cS_N(t)\cS_N(t)].
$

For each $\ve\in\cE_\ell'$ set 
\begin{eqnarray*}
S_{1,\ve,N}=N^{-\frac12}
\sum_{n\in I_\ve(N)}F_\ve(\xi_{q_{\ve(1)}(n,N)},\xi_{q_{\ve(2)}(n,N)},...,\xi_{q_{\ve(\ell)}(n,N)})
\end{eqnarray*}
where 
$
F_\ve(x_{\ve(1)},x_{\ve(2)},...,x_{\ve(\ell)})=F(x_1,x_2,...,x_\ell).
$
Then $S_{1,N}=\sum_{\ve\in\cE_\ell'}S_{1,\ve,N}$.
For each $\ve$, we consider the decomposition of $F_\ve$ 
\[
F_\ve(x_{\ve(1)},...,x_{\ve(\ell)})=\sum_{j=1}^{\ell}F_{\ve,j}(x_{\ve(1)},...,x_{\ve(j)})
\]
where 
\[
F_{\ve,\ell}(x_1,...,x_\ell)=F_\ve(x_1,...,x_\ell)-\int F(x_1,...,x_\ell)d\mu(x_{\ve(\ell)})
\]
and for all $j=\ell-1,\ell-2,...,1$,
\begin{eqnarray*}
F_{\ve,j}(x_{\ve(1)},...,x_{\ve(j)})=\int F(x_1,...,x_\ell)d\mu(x_{\ve(\ell)})d\mu(x_{\ve(\ell-1)})\cdots 
d\mu(x_{\ve(j+1)})\\-\int F(x_1,...,x_\ell)d\mu(x_{\ve(\ell)})d\mu(x_{\ve(\ell-1)})\cdots 
d\mu(x_{\ve(j)}).
\end{eqnarray*}
Observe that for any $\ve$, $i$ and $y_{\ve(1)},...,y_{\ve(i-1)}$ we have
\begin{equation}\label{F_i's props}
\int F_{\ve,i}(y_{\ve(1)},...,y_{\ve(i-1)},x)d\mu(x)=0.
\end{equation}
Set 
\[
S_{\ve,j,N}=\sum_{n\in I_\ve(N)}F_{\ve,j}(\xi_{q_{\ve(1)}(n,N)},...,\xi_{q_{\ve(j)}(n,N)}).
\]
Then the limit $b(t,s)$ exists if  the limits 
\[
D_{\ve,\tau,i,j}(t,s)=\lim_{N\to\infty} \bbE[S_{\ve,i,[Nt]}S_{\tau,j,[Ns]}]
\]
exist, for any $\sig,\tau\in\cE_\ell'$ and $1\leq i,j\leq\ell$.

\subsection{Existence of the limiting covariances: proof of theorem \ref{VarThm}}\label{Sec Var proof}
In the course of the proof of Theorem \ref{VarThm} we will need the following
\begin{lemma}\label{b lemma}
For any $N$, $\ve,\tau\in\cE_\ell$, $1\leq i,j\leq\ell$, $n\in I_\ve(N)$ and $m\in I_\tau(N)$ set
\[
b_{\ve,\tau,i,j,N}(n,m)=\bbE[F_{\ve,i}(\xi_{q_{\ve(1)}(n,N)},...,\xi_{q_{\ve(i)}(n,N)})F_{\tau,j}(\xi_{q_{\tau(1)}(m,N)},...,\xi_{q_{\tau(j)}(m,N)})].
\]
Then 
\[
|b_{\ve,\tau,i,j,N}(n,m)|\leq C\upsilon_N\big(|q_{\ve(i)}(n,N)-q_{\tau(j)}(m,N)|\big)
\]
where $\upsilon_N(l)=\Big(\phi\big(\min([N^{\frac12}],[l/3])\big)\Big)^{1-\frac 2w}+\Big(\beta_q\big(\min([N^\frac12],[l/3])\big)\Big)^\kappa$ and $C$ is some constant.
\end{lemma}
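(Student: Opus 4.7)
My approach is to exploit the centering property (\ref{F_i's props}) of $F_{\ve,i}$ in its last variable together with the spacing structure on $I_\ve(N)$ and $I_\tau(N)$ provided by Proposition \ref{Prop Ord 2}. By symmetry of the bound in $(\ve,i,n) \leftrightarrow (\tau,j,m)$, I may assume without loss of generality $p^* := q_{\ve(i)}(n,N) \geq q_{\tau(j)}(m,N)$; the opposite inequality is handled by the symmetric argument using the centering of $F_{\tau,j}$ in $x_{\tau(j)}$ instead. Set $L = |q_{\ve(i)}(n,N) - q_{\tau(j)}(m,N)|$. When $L=0$ the conclusion is immediate from the uniform $L^w$ bound (\ref{B bound}), since $\upsilon_N(0)$ is bounded away from $0$; hence assume $L \geq 1$.

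The first substantive step is a minimum-gap estimate. Consider the combined set of positions $P = \{q_{\ve(u)}(n,N):\, u \leq i\} \cup \{q_{\tau(u)}(m,N):\, u \leq j\}$. Since $n \in I_\ve(N)$, the ordering (\ref{Ord}) combined with (\ref{Same degree ordering}) and (\ref{Different degree ordering}) yields $p^* - q_{\ve(u)}(n,N) \geq c N^{1/2}$ for every $u < i$. Since $m \in I_\tau(N)$, the same properties give $q_{\tau(u)}(m,N) \leq q_{\tau(j)}(m,N) - c N^{1/2} \leq p^* - c N^{1/2}$ for every $u < j$. Combined with $p^* - q_{\tau(j)}(m,N) = L$, this shows that every element of $P \setminus \{p^*\}$ lies at distance at least $g := \min(L,\, c N^{1/2})$ from $p^*$.

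Next I apply Proposition \ref{1.3.14} to $H = F_{\ve,i} \cdot F_{\tau,j}$ with the two-block partition isolating $\{\xi_{p^*}\}$ from the vector $U = (\xi_p)_{p \in P \setminus \{p^*\}}$ of remaining random variables. Since $H$ satisfies (\ref{F Bound}) and (\ref{F Hold}) with $\iota$ replaced by $2\iota$, the moment condition of Assumption \ref{Moment Ass} remains valid with $w$ replaced by $w/2$, which is precisely what produces the exponent $1-2/w$ in the final bound. With $r = [g/3]$, the unique intermediate $\phi$-gap between the two blocks is at least $g - 2r \geq r$, so the proposition yields
\[
|b_{\ve,\tau,i,j,N}(n,m) - \bbE[H^*]| \leq C\big(\phi(r)^{1-2/w} + \beta_q(r)^\ka\big),
\]
where $H^*$ denotes $H$ evaluated with $\xi_{p^*}$ replaced by an independent $\mu$-distributed copy $\xi'$. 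By independence of $\xi'$ from $U$ and by (\ref{F_i's props}), the replacement expectation factors as $\bbE[F_{\tau,j}(\ldots) \int F_{\ve,i}(\xi_{q_{\ve(1)}(n,N)},\ldots,\xi_{q_{\ve(i-1)}(n,N)},x)\,d\mu(x)] = 0$. Since under Assumption \ref{Mix2} the coefficients $\phi$ and $\beta_q$ decay polynomially, the constant $c$ can be absorbed into $C$, and $r \asymp \min([N^{1/2}],[L/3])$ then gives the desired bound $C\upsilon_N(L)$.

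The main technical obstacle is the bookkeeping involved in invoking Proposition \ref{1.3.14} with a vector-valued second block rather than a scalar one: one must verify that the approximation-error term $\|U - \bbE[U \mid \cF_{\cdot,\cdot}]\|_{L^q}$ for the ``rest'' vector $U$ is controlled by a constant multiple of $\beta_q(r)$, which holds component-wise because every index in $P \setminus \{p^*\}$ sits at distance at least $r$ from the boundary of the enlarged conditioning interval used for the conditional expectation.
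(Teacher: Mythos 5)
Your proof follows essentially the same route as the paper's: isolate the extremal index $p^*=\max\big(q_{\ve(i)}(n,N),q_{\tau(j)}(m,N)\big)$, use the spacing from Proposition \ref{Prop Ord 2} to guarantee a gap of order $\min\big(|q_{\ve(i)}(n,N)-q_{\tau(j)}(m,N)|,N^{1/2}\big)$ between $\xi_{p^*}$ and all other coordinates, decouple $\xi_{p^*}$ via Proposition \ref{1.3.14} applied to the product $H=F_{\ve,i}F_{\tau,j}$, and kill the decoupled expectation with the centering property (\ref{F_i's props}). The paper handles the two signs of the difference separately rather than by a symmetry reduction, but that is cosmetic; your proposal is correct.
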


Suppose that $\te\big(1-\frac 2w\big)>4$ (as in Theorems \ref{VarThm} and \ref{CLTthm}). Then, 
under Assumption \ref{Mix2} we have 
\[
\upsilon_N(l)\leq C\max(N^{-2},l^{-4})
\]
where $C$ is some constant. In particular 
\begin{equation}\label{gamma N lim}
\lim_{N\to\infty}N\upsilon_N(N)=0
\end{equation}
and 
\begin{equation}\label{Gamma}
\Upsilon:=\sup_{N}\sum_{l=0}^\infty (l+1)\upsilon_N(l)<\infty.
\end{equation}

\begin{proof}[Proof of Lemma \ref{b lemma}]
Relying on (\ref{B bound}),
the statement of the lemma clearly holds true when  $q_{\ve(i)}(n,N)=q_{\tau(j)}(m,N)$. Suppose now that
 $l=q_{\ve(i)}(n,N)-q_{\tau(j)}(m,N)>0$. Consider the variables $u_\ve=(x_{\ve(1)},...,x_{\ve(i)})$, $u_\tau=(y_{\tau(1)},...,y_{\tau(j)})$  and 
$u=(u_\ve,u_\tau)$. Consider also the function $H=H(u)$ given by 
\[
H(u)=F_{\tau,j}(u_\tau)F_{\ve,i}(u_\ve).
\]
Set 
\[
\cA_1=\{q_{\tau(w)}(m,N):\,1\leq w\leq j\}\cup\{q_{\ve(w)}(n,N):\,1\leq w< i\}
\,\text{ and }\,\cA_2=\{q_{\ve(i)}(n,N)\}.
\]
Then by (\ref{Different degree ordering}), (\ref{Same degree ordering}) for any $a\in \cC_1$ and $b\in\cC_2$ we have
\[
a\leq b-\min(l,c_1 N^{\frac12})
\]
where $c_1=\min(c,1)$. Consider the random vectors $U_1$ and $U_2$ given by 
\[
U_i=\{\xi_d:\,d\in\cA_i\}.
\]
Then 
\[
b_{\ve,\tau,i,j,N}(n,m)=\bbE H(U_1,U_2).
\]
Let $\tilde U_2$ be a copy of $U_2$ which is independent of $U_1$. Then by (\ref{F_i's props}),
\begin{eqnarray*}
\bbE H(U_1,\tilde U_2)\\=\bbE\big[F_{\tau,j}(\xi_{q_{\tau(1)}(m,N)},...,\xi_{q_{\tau(j)}(m,N)})\cdot
\bbE[F_{\tau,j}(\xi_{q_{\ve(1)}(n,N)},...,\xi_{q_{\ve(i-1)}(n,N)},U_2)|U_1]\big]=0
\end{eqnarray*}
where we used that the law of $U_2$ is $\mu$. Taking into account Assumption \ref{Moment Ass}, (\ref{F Hold}) and (\ref{F Bound}), applying Proposition \ref{1.3.14} we obtain that
\[
|b_{\ve,\tau,i,j,N}(n,m)|=|\bbE H(U_1,U_2)-\bbE H(U_1,\tilde U_2)|\leq 
 C\upsilon_N(l)
\]
where $C$ is some constant and $\upsilon_N(l)$ was defined in the statement of the lemma. The proof of the lemma in the case when $q_{\ve(i)}(n,N)-q_{\tau(j)}(m,N)<0$ is analogous.
\end{proof}

Now we will prove
\begin{proposition}\label{Var prop}
Suppose that the conditions of Theorem \ref{VarThm} hold true. Then the limits $b(t,s)$ exist. In fact:

(i) If $\deg q_{\ve(i)}\not=\deg q_{\tau(j)}$ then the limits $D_{\ve,\tau,i,j}(t,s)$ exist an equals $0$.

(ii) If $\ve=\tau$, $i=j$ and $\deg q_{\ve(i)}>1$ then
\[
D_{\ve,\tau,i,j}(t,s)=\min(s,t)c\int F_{\ve,i}^2(x_{\ve(1)},...,x_{\ve(i)})d\mu(x_{\ve(1)})d\mu(x_{\ve(2)})\cdots d\mu(x_{\ve(i)})
\]
where 
\[
c=\lim_{N\to\infty}N^{-1}|I_\ve(N)|=\sum_{j=1}^{l_\ve}\max\big(b_{j,\ve}-a_{j,\ve},0\big)
\]
and $b_{j,\ve}$ and $a_{j,\ve}$ are defined in Proposition \ref{Prop Ord 2}. 

(iii) If $q_{\ve(i)}$  and $q_{\tau(j)}$ have exploding differences then the limit $D_{\ve,\tau,i,j}(t,s)$ exists and equals $0$. 

(iv) If $q_{\ve(i)}$ and $q_{\tau(j)}$ are linearly related, $\deg q_{\ve(i)}=\deg q_{\tau(j)}=k>1$ and the differences of $q_{\ve(i)}$ and $q_{\tau(j)}$ do not explode,
 then the limit $D_{\ve,\tau,i,j}(t,s)$ exists and has the form
\[
c_{\ve,i,\tau,j}(s,t) \int F_{\ve,i}(\bar x)F_{\tau, j}(\bar y)d\cM(\bar x,\bar y)
\]
where $\cM$ is define by (\ref{cM def}) and the number $c_{\ve,i,\tau,j}(s,t)$ can be recovered from the proof.
\end{proposition}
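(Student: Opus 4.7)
The plan is to reduce each of (i)--(iv) to a controlled double sum analysis
\[
\bbE[S_{\ve,i,[Nt]}S_{\tau,j,[Ns]}] \;\propto\; N^{-1}\!\!\sum_{n,m}b_{\ve,\tau,i,j,N}(n,m),
\]
using Lemma \ref{b lemma} to bound each term by $\upsilon_N(|q_{\ve(i)}(n,N)-q_{\tau(j)}(m,N)|)$, together with the asymptotic ordering from Proposition \ref{Prop Ord 2} and the bivariate polynomial gap estimates from Section \ref{Sec2}. The unifying principle is that only pairs $(n,m)$ for which the polynomial difference stays bounded (or grows slower than any positive power of $N$) contribute in the limit, because for any difference exceeding $N$, (\ref{gamma N lim}) guarantees the total contribution is $o(1)$, while summability (\ref{Gamma}) controls error terms uniformly in $N$.

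For (i), estimate (\ref{Different degree ordering}) forces $|q_{\ve(i)}(n,N)-q_{\tau(j)}(m,N)|\ge N$ for $n,m\ge N^{r_0}$, so Lemma \ref{b lemma} gives $|b_{\ve,\tau,i,j,N}|\le C\upsilon_N(N)$ and summation yields $O(N\upsilon_N(N))=o(1)$; boundary $n$ or $m$ below $N^{r_0}$ contribute $O(N^{r_0-1})$. For (iii), apply the exploding-differences definition with parameter $\delta$: on the "good" set $[\delta N,N]\setminus\Gamma_{N,\delta}$ every difference is $\ge C_\delta N$, contributing $O(N\upsilon_N(C_\delta N))=o(1)$; the $O(\delta N)$ "bad" indices each satisfy $\sum_m|b_{\ve,\tau,i,j,N}(n,m)|\le C\sum_l\upsilon_N(l)=O(1)$ by (\ref{Gamma}) (using that the polynomial $q_{\tau(j)}(\cdot,N)$ has bounded degree so each level set of the difference is $O(1)$), yielding total $O(\delta)$; let $\delta\to 0$. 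For (ii), the diagonal $n=m$ is dominant: on $I_\ve$ the spacings between the indices $q_{\ve(s)}(n,N)$ are at least $cN^{1/2}$ by (\ref{Same degree ordering}) and (\ref{Different degree ordering}), so Proposition \ref{1.3.14} with Assumption \ref{Mix2} ($\theta>4w/(w-2)$) yields uniform convergence of $\bbE[F_{\ve,i}^2(\cdot)]$ to $\int F_{\ve,i}^2\,d\mu^i$; off-diagonal, since $\deg q_{\ve(i)}=k\ge 2$, the mean value theorem on $I_\ve\cap[\delta N,N]$ gives $|q_{\ve(i)}(n,N)-q_{\ve(i)}(m,N)|\ge c_\delta N^{k-1}|n-m|\ge c_\delta N$ for $n\ne m$, so Lemma \ref{b lemma} and (\ref{Gamma}) absorb these terms. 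The coefficient then comes from $N^{-1}|I_\ve(N)\cap[1,[N\min(t,s)]]|$.

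Case (iv) is the main obstacle. By Corollary \ref{Cor linear case} the set of pairs on which $|q_{\ve(i)}(n,N)-q_{\tau(j)}(m,N)|$ remains bounded is exactly a finite union of arithmetic progressions $m=cn+r$ with rational $c,r$, on which the difference equals a fixed constant $d$; off these progressions the difference is at least $W_\delta N^{k-1}\ge W_\delta N$ and contributes negligibly as in case (i). On each progression, I would argue via Proposition \ref{1.3.14} and the distributional stationarity assumption (\ref{xi dist}) that the full $(i+j)$-tuple of indices, after accounting for the fixed relative shifts (determined by $c,r,d$ and the lower-order coefficients of the $q$'s), has a joint law converging to a well-defined measure $\cM$, giving $b_{\ve,\tau,i,j,N}(n,m)\to\int F_{\ve,i}(\bar x)F_{\tau,j}(\bar y)\,d\cM(\bar x,\bar y)$ uniformly on the relevant $n$. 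The coefficient $c_{\ve,i,\tau,j}(s,t)$ arises as the asymptotic density of these arithmetic progressions intersected with $I_\ve(N)\cap[1,[Nt]]$ and whose projection under $m=cn+r$ lands in $I_\tau(N)\cap[1,[Ns]]$. The delicate part is tracking these intersections across the partition from Proposition \ref{Prop Ord 2} (since $I_\ve$ and $I_\tau$ are each a disjoint union of intervals of the form $(a_{j,\cdot}N,b_{j,\cdot}N)$), summing consistently over the finite list of progressions, and identifying $\cM$ explicitly so that the formula is well posed.
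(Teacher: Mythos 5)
Your plan follows the paper's strategy throughout: bound $b_{\ve,\tau,i,j,N}(n,m)$ via Lemma \ref{b lemma} by $\upsilon_N$ of the polynomial gap, use the gap estimates to kill contributions outside a thin set of pairs via (\ref{gamma N lim}), and absorb the thin exceptional sets via (\ref{Gamma}) together with the bounded-degree level-set count on solutions of $q_j(m,N)=q_i(n,N)+k$. Cases (i) and (iii) match the paper's $N\upsilon_N(N)\to0$ estimate and $J_1+J_2+J_3$ split, and in (ii) your version is in fact slightly more careful than the paper's: the paper writes $b_{\ve,\ve,i,i,N}(n,n)=\int F_{\ve,i}^2\,d\mu^i$ as an identity, which only holds under independence of the $\xi$'s, whereas the correct argument (yours) is that the $N^{1/2}$ spacings from (\ref{Same degree ordering}) give this as a \emph{limit} via Proposition \ref{1.3.14}.

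In (iv), the step you defer is where the real content of the statement lies, and your phrasing (``the full $(i+j)$-tuple of indices \dots has a joint law converging to a well-defined measure $\cM$'') would not survive contact with the details. The joint law does not converge; what converges, after excising $O(\delta N)$ bad $n$'s, is the scalar $b_{\ve,\tau,i,j,N}(n,cn+r)$, and the paper establishes this by a concrete subcase analysis of the remaining linear-polynomial differences $q_{\ve(u)}(n,N)-q_{\tau(v)}(cn+r,N)$ for $u<i$, $v<j$: according to whether $ca_{\ve(u)}=a_{\tau(v)}$ and whether $b_{\ve(u)}=b_{\tau(v)}$, this difference is a constant in $n,N$, is at least $C_\delta N$ for all $n\in[\delta N,N]$, or vanishes near one ratio $n/N\approx w_{u,v}$ (forcing the excision of further intervals $\Theta_{N,\delta}$). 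Only after this can all $i+j$ indices be simultaneously ordered so that consecutive gaps are either constants $d_e$ or of order at least $N^{1/2}$, using (\ref{Different degree ordering}), (\ref{Same degree ordering}), Assumption \ref{A2} and Corollary \ref{Cor linear case}; at that point Proposition \ref{1.3.14} produces the limit (\ref{cM def}) with $\mu_{d_e}$ factors precisely at the constant-gap pairs $(l_e,z_e)$. This subcase analysis is not bookkeeping that can be postponed — it is exactly what determines which pairs $(l_e,z_e)$ appear in $\cM$, and hence the very definition of the measure and of $c_{\ve,i,\tau,j}(s,t)$.
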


\begin{proof}
As we have explained before, the limits $b(t,s)$ exist if  the limits 
\[
D_{\ve,\tau,i,j}(t,s)=\lim_{N\to\infty} \bbE[S_{\ve,i,[Nt]}S_{\tau,j,[Ns]}]
\]
exist, for any $\sig,\tau\in\cE_\ell'$ and $1\leq i,j\leq\ell$. 

Let $\ve,\tau\in\cE_\ell'$ and $1\leq i,j\leq\ell$. 
 When $\deg q_{\ve(i)}=\deg q_{\tau(j)}=1$, then existence of the limits $D_{\ve,\tau,i,j}(t,s)$ is obtained exactly as in Chapter 3 of \cite{book}, taking into account that $\ve,\tau$  have the tensor product structure (\ref{tenz}). For readers' convenince, let us explain the main idea of the proof in \cite{book}. 
First, it enough to show that the limit
\begin{equation}\label{3.3.14}
\lim_{N\to\infty}\frac 1N\sum_{\substack
{a_\ve N<m<b_\ve N,\,a_{\tilde\ve}N<n<b_{\tilde\ve}N\\
\rho_{\ve_i}(m,N)-\rho_{\tilde\ve_j}(n,N)=u}}
b_{i,j,\ve,\tilde\ve}(N,m,n)
\end{equation}
exists for each integer $u$ (where the sum over the empty set is considered
to be zero) and since then we can just over all $u$'s. 
The idea behind proving (\ref{3.3.14}) is to show that  that if $N\to\infty$, $m-a_\ve N\to\infty,\, b_\ve N-m
\to\infty,\, n-a_{\tilde\ve}N\to\infty,\, b_{\tilde\ve}N-n\to\infty$ and
$\rho_{\ve_i}(m,N)-\rho_{\tilde\ve_j}(n,N)=p_{\ve_i}m-p_{\tilde\ve_j}n+
N(q_{\ve_i}-q_{\tilde\ve_j})=u$ then $b_{i,j,\ve,\tilde\ve}(N,m,n)$
converges to a limit. After that, using Assumption \ref{A1} we estimated the number of solutions 
of the equation
\begin{equation}\label{3.3.15}
p_{\ve_i}m-p_{\tilde\ve_j}n+N(q_{\ve_i}-q_{\tilde\ve_j})=u
\end{equation}
in $m\in\cN_{\ve,N}$ and $n\in\cN_{\tilde\ve,N}$, divided this number by
$N$ and showed that this ratio converges to a limit which will give the
limit in (\ref{3.3.14}). We refer the readers' to the Section 3.3.2 in Chapter 3 of \cite{book} for the precise details.

Next, suppose that $\max\big(\deg q_{\ve(i)},\deg q_{\tau(j)}\big)>1$.
In what follows, we will always assume that $\deg q_{\ve(i)}\geq \deg q_{\tau(j)}$. We first write
\begin{equation}\label{First write}
\bbE[S_{\ve,i,[Nt]}S_{\tau,j,[Ns]}]=N^{-1}\sum_{n\in I_\ve(N,t)}\sum_{m\in I_\tau(N,t)}
b_{\ve,\tau,i,j,N}(n,m)
\end{equation}
where $I_\ve(N,t)=I_\ve(N)\cap[1,Nt]$ and $I_\tau(N,s)=I_\tau(N)\cap[1,Ns]$.
Suppose first that  $\deg q_{\ve(i)}>\deg q_{\tau(j)}$. Then by (\ref{Different degree ordering}) we have 
\[
\min_{N^{r_0}\leq n,m\leq N}\big(q_{\ve(i)}(n,N)-q_{\tau(j)}(m,N)\big)\geq N 
\]
and therefore, by Lemma \ref{b lemma}, for any $n\in I_\ve(N)$ and $m\in I_\tau(N)$,
\begin{eqnarray}
|b_{\ve,\tau,i,j,N}(n,m)|\leq C\upsilon_N(N).
\end{eqnarray}
Hence, by (\ref{gamma N lim}) we have
\[
\left|\bbE[S_{\ve,i,[Nt]}S_{\tau,j,[Ns]}]\right|\leq N\upsilon_N(N)\to0\,\,\text{ when }\,N\to\infty.
\]

Now we will consider the case when $\deg q_{\ve(i)}=\deg q_{\tau(j)}>1$.
Suppose that $\ve=\tau$ and $i=j$.
If $n\not=m$ then 
\[
|q_{\ve(i)}(n,N)-q_{\ve(i)}(m,N)|\geq C_1\min(n,m)
\]
where $C_1>0$ is some constant,
and therefore by Lemma \ref{b lemma} we have
\[
|b_{\ve,\tau,i,j,N}(n,m)|=|b_{\ve,\ve,i,i,N}(n,m)|\leq C\upsilon_N(C_1\min(n,m))
\]
which implies that 
\[
\lim_{N\to\infty}N^{-1}\sum_{n\in I_\ve(N,t)}\sum_{m\in I_\tau(N,t),\,m\not=n}
|b_{\ve,\ve,i,i,N}(n,m)|=0
\]
where we used (\ref{gamma N lim}). On the other hand, when $n=m$ we have
\[
b_{\ve,\ve,i,i,N}(n,n)=\int F^2_{\ve,i}(x_{\ve(1)},...,x_{\ve(i)})d\mu(x_{\ve(1)})\cdots d\mu(x_{\ve(i)})
\]
and therefore by (\ref{First write}),
\[
\lim_{N\to\infty} \bbE[S_{\ve,i,[Nt]}S_{\ve,j,[Ns]}]=\int F^2_{\ve,i}(x_{\ve(1)},...,x_{\ve(i)})d\mu(x_{\ve(1)})\cdots d\mu(x_{\ve(i)})
\]
and the proof of Proposition \ref{Var prop} (ii) is completed.

Suppose next that the differences of $q_{\ve(i)}$  and $q_{\tau(j)}$ explode. Fix some  $\del\in(0,1)$. Then there exist constants  $C_\del>0$, $N_\del>0$ and sets
$\Gam_{N,\del}\subset[1,N]$  of integers whose cardinality does not exceed $\del N$ so that
 for any $N>N_\del$ and $n\in[\del N,N]\setminus\Gam_{N,\del}$, 
\[
\min_{m\in[N\del,N]}|q_{\ve(i)}(n,N)-q_{\tau(j)}(m,N)|\geq C_\del N.
\]
Write 
\[
\bbE[S_{\ve,i,[Nt]}S_{\tau,j,[Ns]}]=J_1+J_2+J_3
\]
where $J_i=J_i(N,\ve,\tau,i,j,s,t,\del)$ are given by
\begin{eqnarray*}
J_1=N^{-1}\sum_{n\in I_\ve(N,t)\setminus \Gam_{N,\del}}\,\sum_{m\in I_\tau(N,s)\cap[\del N, N]}
b_{\ve,\tau,i,j,N}(n,m),\\
J_2=N^{-1}\sum_{n\in I_\ve(N,t)\setminus \Gam_{N,\del}}\,\sum_{m\in I_\tau(N,s)\cap[1,\del N)}
b_{\ve,\tau,i,j,N}(n,m)\,\,\text{ and }\\
J_3=N^{-1}\sum_{n\in I_\ve(N,t)\cap \Gam_{N,\del}}\,\sum_{m\in I_\tau(N,s)}
b_{\ve,\tau,i,j,N}(n,m).
\end{eqnarray*}
We will show that the upper limit as $N\to\infty$ of each one of the $J_i$'s does not exceed $\del$, which by taking $\del\to0$ will complete the proof of Proposition \ref{Var prop} (iii).
First by Lemma \ref{b lemma},
\[
|J_1|\leq N^{-1}\sum_{n\in I_\ve(N,t)\setminus \Gam_{N,\del}}\sum_{m\in I_\tau(N,s)\cap[N\del, N]}
|b_{\ve,\tau,i,j,N}(n,m)|\leq C'_\del N\upsilon_N([C_\del N])
\]
for some constant $C'_\del>0$,
and so by (\ref{gamma N lim}) we have $\lim_{N\to\infty}J_1=0$. Next,
\[
|J_2|\leq N^{-1}\sum_{m=1}^{\del N}\sum_{k=0}^\infty\sum_{n\in A_k(m,N)}|b_{\ve,\tau,i,j,N}(n,m)|
\]
where 
$
A_k(m,N)
$ 
is the set of all $n$'s so that $|q_{\ve(i)}(n,N)-q_{\tau(j)}(m,N)|=k$. Since the $q_i$'s are polynomials, $|A_k(m,N)|\leq C_2$ for some constant $C_2$ which does not depend on $m$ and $k$.
It follows from Lemma \ref{b lemma} that  
\[
|J_2|\leq C_2N^{-1}\sum_{m=1}^{\del N}\sum_{k=0}^\infty\upsilon_N(k)\leq C_2\Upsilon\del
\]
where $\Upsilon$ was defined in (\ref{Gamma}). Since the cardinality of the set $\Gam_{N,\del}$ does not exceed $\del$, similar arguments show that for any sufficiently large $N$,
\[
|J_3|\leq A\del
\]
for some constant $A$.

Next, suppose that $q_{\ve(i)}$  and $q_{\tau(j)}$ are linearly related and that their differences do not explode. Then, by Corollary \ref{Cor linear case} they are equivalent, namely there exists rational $c$ and $r$ so that 
\[
q_{\ve(i)}(n,N)=q_{\tau(j)}(cn+r,N)
\]
for any natural $n$. If $m\not=cn+r$ then by Corollary (\ref{Cor linear case}) we have
\[
|q_N(m)-p_N(n)|\geq Q(cn/N)N^{k-1}C_0
\]
for some continuous function $Q$ which is strictly positive on $(0,1]$, 
 and so by Lemma \ref{b lemma},
\[
|b_{\ve,\tau,i,j,N}(n,m)|\leq C\upsilon_N\big(Q(cn/N)N^{k-1}C_0\big).
\]
Therefore, taking into account (\ref{gamma N lim}), in order to prove that the limit $D_{\ve,\tau,i,j}(t,s)$ exists, it is enough to show that the limit
\begin{equation}\label{Sum over}
\lim_{N\to\infty}N^{-1}\sum_{n\in I_\ve(N,t)}b_{\ve,\tau,i,j,N}(n,cn+r)\bbI(cn+r\in I_\tau(N,s)\cap \bbN)
\end{equation}
exists, where  $x\to I(x\in A)$ is the indicator function of a set $A$.
 Since the sets $I_{\ve}(N)$ and $I_{\tau}(N)$ are unions of intervals whose length is proportional to $N$ (i.e they have the form (\ref{I form})), and the set 
\[
\{n\in\bbN:\,cn+r\in\bbN\}
\]
is a finite union of arithmetic progressions, in order to show that the above limit exists it is enough to show that there exist sets $\Del_{N,\del}$ whose cardinalities do not exceed $\del N$ so that for any $\del\in(0,1)$  the limit
\begin{equation}\label{eps tau limit}
\lim_{N\to\infty,\,n\in I_\ve(N)\setminus\Del_{N,\del},\,
cn+r\in I_\tau(N)}b_{\ve,\tau,i,j,N}(n,cn+r)
\end{equation}
exists, and that this limit does not depend on $\del$. Fix some $\del\in(0,1)$. Suppose that $q_{\ve(u)}(x,N)=a_{\ve(u)}x+b_{\ve(u)} N$ and $q_{\tau(v)}(x,N)=a_{\tau(v)}x+b_{\tau(v)}N$ are  linear polynomials  where $u<i$ and $v<j$. Then 
\[
q_{\ve(u)}(n,N)-q_{\tau(v)}(cn+r,N)=a_{\ve(u)} r+(a_{\ve(u)} c-a_{\tau(v)})n+(b_{\ve(u)}-b_{\tau(v)})N.
\] 
If $ca_{\ve(u)}\not=a_{\tau(v)}$ but $b_{\ve(u)}=b_{\tau(v)}$ then 
for any $n\in[\del N,N]$ we have 
\[
|q_{\ve(u)}(n,N)-q_{\tau(v)}(cn+r,N)|\geq C_\del N
\]
for some constant $C_\del>0$. When  $ca_{\ve(u)}\not=a_{\tau(v)}$ and $b_{\ve(u)}\not=b_{\tau(v)}$, there exists a constant $w=w_{u,v,N}\in\bbR$ so that for any sufficiently large $N$ and $n\in[\del N,N]\setminus[N(w-\del),N(w+\del)]:=J(w,\del, N)$,
\[
|q_{\ve(u)}(n,N)-q_{\tau(v)}(cn+r,N)|\geq C'_\del N
\]
where $C'_\del>0$ is some constant. Let $(u_j,v_j),\,j=1,...,L_1$ be the indexes so that $q_{\ve(u_j)}$ and $q_{\tau(v_j)}$ are linear,  $ca_{\ve(u_j)}\not=a_{\tau(v_j)}$ and $b_{\ve(u_j)}\not=b_{\tau(v_j)}$. Let $q_{\ve(1)},...,q_{\ve(U)}$ and $q_{\tau(1)},...,q_{\tau(V)}$ be the linear polynomials among the $q_{\ve(u)}$'s and the $q_{\tau(v)}$'s, respectively, where $1\leq u<i$ and $1\leq v<j$. We conclude that for any
\[
 n\in \bigcup_{j=1}^{L_1}J(w_{u_j,v_j,N},\del,N):=\Theta_{N,\del}
\]
we can order the numbers $q_{\ve(u)}(n,N)$ and $q_{\tau(v)}(cn+r,N)$, where $1\leq u\leq U$ and $1\leq v\leq V$, so that the differences between each consecutive numbers in this ordering is either a constant which does not depend on $n$ and $N$, or it is not less than  $B_\del N^{\frac12}$  for some $B_\del>0$, where we also used (\ref{Same degree ordering}). 
Combining this with Assumption \ref{A2}, we deduce from Corollary \ref{Cor linear case} that the terms
\[
q_{\ve(l)}(n,N)-q_{\tau(z)}(cn+r,N)
\]
where $1\leq l\leq i$ and $1\leq z\leq j$,
 either have aboslute values bounded from below by some $E_\del N$, $E_\del>0$ or they are constants. Set $\Del_{N,\del}=[\del N,N]\setminus\Theta_{N,\del}$. Using (\ref{Different degree ordering}) and (\ref{Same degree ordering}), applying  Proposition \ref{1.3.14} we derive that there exist constants $d_1,...,d_{L}$ ($L\geq0$) and indexes $(l_e,z_e),\,e=1,2,...,L$ so that 
\[
\lim_{N\to\infty,\,n\in I_\ve(N)\setminus\Del_{N,\del},\,
cn+r\in I_\tau(N)}b_{\ve,\tau,i,j,N}(n,cn+r)
=\int F_{\ve,i}(\bar x)F_{\tau, j}(\bar y)d\cM
\]
where $\bar x=(x_{\ve_1},...,x_{\ve(i)})$, $\bar y=(y_{\tau(1)},...,y_{\tau(j)})$ and
$\cM$ has the form
\begin{equation}\label{cM def}
d\cM(\bar x, \bar y)=\prod_{l\not\in\{l_e\}}d\mu(x_{\ve(l)})\prod_{z\not\in\{z_e\}}d\mu(y_{\tau(z)})
\prod_{e=1}^L d\mu_{d_e}(x_{\ve(l_e)},y_{\tau(z_e)}).
\end{equation}
The indexes $(l_e,z_e)$ are exactly the ones for which 
\[
q_{\ve(l_e)}(n,N)-q_{\tau(z_e)}(cn+r,N)=d_e=q_{\ve(l_e)}(r,0)-q_{\tau(z_e)}(0,0)
\]
where $d_e$ does not depend on $n$ and $N$.
\end{proof}

\subsection{Existence under abstract number theory type conditions}\label{General Cov}
Let $\ve,\tau\in\cE'_\ell$ and $1\leq i,j\leq\ell$ be so that $\deg q_{\ve(i)}=\deg_{\tau(j)}=k>1$. 
Then
\[
 \bbE[S_{\ve,i,[Nt]}S_{\tau,j,[Ns]}]=\frac{1}N\sum_{n\in I_\ve(N,t)}\sum_{v\in\bbZ}\,\sum_{m\in J_\ve(N,s,v,i,\ve)}b_{\ve,\tau,i,j,N}(n,m)
\] 
where $J_\ve(N,s,v,u,\ve)$ is the set of all $m$'s in $I_\tau(N,s)$ so that $q_{\tau(j)}(m,N)-q_{\ve(i)}(n,N)=v$.
Taking into account Lemma \ref{b lemma}, if the limit
\[
\Gam_{v}=\lim_{N\to\infty}\frac{1}N\sum_{n\in I_\ve(N,t)}\,\sum_{m\in J_\ve(N,s,v,i,\ve)}b_{\ve,\tau,i,j,N}(n,m)
\]
exists for each $v$, then 
\[
D_{\ve,\tau,i,j}(t,s)=\lim_{N\to\infty}\bbE[S_{\ve,i,[Nt]}S_{\tau,j,[Ns]}]=\sum_{v\in\bbZ}\Gam_{v}.
\]
Using Lemma \ref{Lemma} with $q=q_{\tau(j)}$ and $p=q_{\ve(i)}$, the equality  $q_{\tau(j)}(m,N)-q_{\ve(i)}(n,N)=v$ means that, with $y=n/N$, 
\[
H_{N,y}(m-N\gam_k(y)-R_k(y))=\frac{v+P_0+Q(R_k(y),0)}{N^{k-1}Q_k'(\gam_k(y))}.
\]
Fix some $\del\in(0,1)$. 
When $|u|\leq \sqrt N$ and $y\geq\del$ then, since $H_{N,y}$ is one to one on intervals of the form $[-a,a],\,a>0$ (when $N$ is large enough), we obtain that for any sufficiently large $N$ there exists a unique solution  $x=x_{n,N,v}$ to the equation 
\[
H_{N,y}(x)=\frac{v+P_0+Q(R_k(y),0)}{N^{k-1}Q_k'(\gam_k(y))}.
\]
Using Lemma \ref{b lemma} the following proposition follows similarly to the proof of Proposition \ref{Var prop}.
\begin{proposition}
The limit $D_{\ve,\tau,i,j}(t,s)$
exists if for any $v$ and sufficiently large $n$ and $m$ so that  $q_{\tau(j)}(m,N)-q_{\ve(i)}(n,N)=v$ the differences 
\[
q_{\tau(j')}(m,N)-q_{\ve(i')}(n,N),\,i'\leq i,\,\,j'\leq j
\]
are either constants or they converge to $\infty$ (in absolute value) as $n\to\infty$ and the limit 
\[
\lim_{N\to\infty}\frac 1N\left|\{n\in I_{\ve}(N,t):\,x_{n,N,v}+\gam_k(n/N)+R_k(n/N)\in\bbN\}\right|
\] 
exists, where $|\Gamma|$ stands for the cardinality of a finite set $\Gamma$.
\end{proposition}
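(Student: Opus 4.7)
The plan is to reduce the assertion to establishing, for each fixed $v\in\bbZ$, the existence of
\[
\Gam_v=\lim_{N\to\infty}\frac{1}{N}\sum_{n\in I_\ve(N,t)}\,\sum_{m\in J_\ve(N,s,v,i,\ve)}b_{\ve,\tau,i,j,N}(n,m),
\]
after which $D_{\ve,\tau,i,j}(t,s)=\sum_{v\in\bbZ}\Gam_v$ follows by dominated convergence in the $v$ variable. The majorant is supplied by Lemma~\ref{b lemma} together with~(\ref{Gamma}): since $|J_\ve(N,s,v,i,\ve)|$ is uniformly bounded (as $q_{\tau(j)}(\cdot,N)$ has bounded degree), one has $|b_{\ve,\tau,i,j,N}(n,m)|\leq C\upsilon_N(|v|)$ whenever $m\in J_\ve(N,s,v,i,\ve)$, and $\sup_N\sum_{|v|>V}\upsilon_N(|v|)\to 0$ as $V\to\infty$. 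Hence only finitely many $v$'s contribute up to an arbitrarily small error uniform in $N$, and interchanging the limit with the sum over $v$ is legitimate once each $\Gam_v$ is shown to exist.

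To establish the existence of $\Gam_v$, fix $v\in\bbZ$ and consider $n\in I_\ve(N,t)$ with $n\geq \del N$ (the remaining $n$'s contribute $O(\del)$ by Lemma~\ref{b lemma} and hence are negligible after letting $\del\to 0$). For every sufficiently large $N$ we have $|v|\leq \sqrt{N}$, and the discussion preceding the proposition then yields a unique real solution $m=N\gam_k(n/N)+R_k(n/N)+x_{n,N,v}$ to $q_{\tau(j)}(m,N)-q_{\ve(i)}(n,N)=v$. This $m$ is an element of $\bbN$ precisely when the integrality condition in the statement of the proposition holds. Denote by $E_{v,N}(t,s)$ the set of $n$'s satisfying this integrality such that additionally $m\in I_\tau(N,s)$. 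Since the endpoints of the intervals defining $I_\tau(N,s)$ are linear in $N$ and $x_{n,N,v}$ is of lower order, the second hypothesis of the proposition (together with a short boundary argument) provides the existence of $L_v(t,s):=\lim_{N\to\infty}|E_{v,N}(t,s)|/N$, and the inner sum over $m$ collapses to a single-term sum over $n\in E_{v,N}(t,s)$.

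For such $n$ and the associated $m$, the first hypothesis forces each difference $q_{\tau(j')}(m,N)-q_{\ve(i')}(n,N)$ with $i'\leq i$ and $j'\leq j$ to be either equal to a constant $d_{i',j'}=d_{i',j'}(v)$ independent of $n$ (once $n$ is large) or to diverge in absolute value. Combining this with the orderings~(\ref{Different degree ordering}) and~(\ref{Same degree ordering}) supplied by Proposition~\ref{Prop Ord 2}, Proposition~\ref{1.3.14} applies in exactly the same pattern as case (iv) in the proof of Proposition~\ref{Var prop}, and gives
\[
\lim_{N\to\infty} b_{\ve,\tau,i,j,N}(n,m)=\int F_{\ve,i}(\bar x)F_{\tau,j}(\bar y)\,d\cM_v(\bar x,\bar y)=:B_v,
\]
uniformly over $n\in E_{v,N}(t,s)$, where $\cM_v$ is built as in~(\ref{cM def}): it couples exactly those pairs of coordinates whose underlying difference equals a constant $d_{i',j'}(v)$ (through the joint law $\mu_{d_{i',j'}(v)}$) and leaves the remaining coordinates independent with marginal $\mu$. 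Consequently $\Gam_v=L_v(t,s)\cdot B_v$, and summing over $v$ yields the desired limit for $D_{\ve,\tau,i,j}(t,s)$.

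The main obstacle is the uniformity in $n$ of the convergence $b_{\ve,\tau,i,j,N}(n,m)\to B_v$ in paragraph three. One must verify that the constants $d_{i',j'}(v)$ do not secretly depend on $n$ or $N$ through the implicit solution $x_{n,N,v}$, and that the error term produced by Proposition~\ref{1.3.14} is dominated by a quantity which, after averaging over $n$ and summing over $v$, still vanishes with $N$. Both issues are controlled by the fact that $x_{n,N,v}$ is of lower order than $N$ (so the relevant differences agree modulo $o(N)$ with a fixed polynomial expression in $n$) and by the decay estimate~(\ref{gamma N lim}) for the non-constant differences, following the template already used in the proof of Proposition~\ref{Var prop}(iv).
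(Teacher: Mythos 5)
Your argument fills in exactly the details the paper indicates: the paper's own proof is the one-line remark that the statement ``follows similarly to the proof of Proposition~\ref{Var prop}'' using Lemma~\ref{b lemma}, together with the reduction $D_{\ve,\tau,i,j}(t,s)=\sum_v\Gam_v$ and the construction of $x_{n,N,v}$ set up immediately before the proposition, and your elaboration (tail control in $v$, uniqueness of the nonnegative integer $m$ for each $(n,N,v)$, density from the second hypothesis, limiting covariance $B_v$ from the first hypothesis via Proposition~\ref{1.3.14}) is the expected realization of that sketch, in the same pattern as case~(iv) of Proposition~\ref{Var prop}. One small imprecision, which the paper itself shares in equation~(\ref{Gamma}): the literal claim $\sup_N\sum_{|v|>V}\upsilon_N(|v|)\to 0$ fails, because $\upsilon_N(l)$ becomes constant in $l$ once $l>3\sqrt N$ and so the unrestricted sum over $v$ diverges; the correct tail estimate must be restricted to the $O(N)$ values of $v$ that actually arise for each $n$ and then split at $l\sim\sqrt N$, using $\upsilon_N(l)\le C l^{-4}$ on the small range and $\upsilon_N(l)\le CN^{-2-\epsilon}$ (with $\le N$ contributing terms) on the large range. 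This does not change the conclusion, and the rest of the proposal is sound.
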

Note that when the above conditions hold true only with $s=t=1$ then we obtain that the limit $D^2$ exists, which is enough in order to derive that $\cS_N(1)$ converges in distribution towards a centered normal random variable whose variance is $D^2$.





\subsection{Positivity of the asymptotic variance}\label{PosSec}
We assume here  that the conditions of Theorem \ref{VarThm} are satisfied. Set $S_N=\cS_N(1)$ and
\[
D^2=\lim_{N\to\infty}\bbE S_N^2=\lim_{N\to\infty}\text{Var}(S_N).
\]

We will say that the polynomials $q_i(x,N)$ and $q_j(x,N)$ are equivalent if there exist rational $c$ and $r$ so that  $q_i(n,N)-q_j(cn+r,N)$ is a constant function of $n$ and $N$  (in Remark \ref{Remark equiv} we called this equivalence relation $\bbQ$-equivalence). Then any two equivalent polynomials have the same degree. Let $q_{s+1},...,q_{\ell}$  be the non-linear polynomials among the $q_i$'s and consider the decomposition of $\{q_i:\,i>s\}$ into  equivalence classes $A_1,A_2,...,A_w$, ordered so that the degree of each member of $A_i$ does not exceed the degree of each member of $A_{i+1}$, $i=1,...,w-1$. For any $\ve,\tau,i,j$ so that $q_{\ve(i)}$ and $q_{\tau(j)}$ are not both linear and not equivalent we have 
\[
D_{\ve,\tau,i,j}:=D_{\ve,\tau,i,j}(1,1)=0.
\]
Therefore,
$
D^2=\sum_{t=0}^{w}D_t^2
$
where for $t=1,2,...,w$,
\[
D_t^2=\sum_{\ve,\tau}\sum_{i,j: q_{\ve(i)},q_{\tau(j)}\in A_t}D_{\ve,\tau,i,j}.
\]
and $D_0^2:=D^2-\sum_{t=1}^wD_t^2$ (we will see soon that it also nonnegative). Hence, $D^2=0$ if and only if $D_t^2=0$ for any $t$.

When $q_1$ is linear and $q_1,...,q_{s}$ are the linear polynomials among the $q_i$'s, we define the function $G$ by
\[
G(x_1,...,x_s)=\int F(x_1,...,x_s,y_{s+1},...,y_\ell)d\mu(y_{s+1})\cdots d\mu(y_\ell).
\]
Then, taking into account the tensor product structure of the set of permutations $\cE_\ell'$ from (\ref{tenz}) we have 
\begin{eqnarray*}
\sum_{n=1}^N G(\xi_{q_1(n,N)},...,\xi_{q_s(n,N)})=\sum_{\ve\in\cE_\ell'}\sum_{n\in I_\ve(N)}\sum_{i=1}^sF_{\ve,i}(\xi_{q_{\ve(1)}(n,N)},...,\xi_{q_{\ve(i)}(n,N)})\\
+\sum_{n\in B_N} G(\xi_{q_1(n,N)},...,\xi_{q_s(n,N)}):=G_{N,1}+G_{N,2}
\end{eqnarray*}
where $B_N$ is the set from Proposition \ref{Prop Ord 2}. By Lemma \ref{VarLemma}, we have $\lim_{N\to\infty}N^{-1}\bbE G_{N,2}^2=0$ (since the cardinality of $B_N$ grows sublinearly in $N$) and hence
 \[
D_0^2=\lim_{N\to\infty}N^{-1}\bbE G_N^2
\]
where 
\[
G_N=\sum_{n=1}^{N}G(\xi_{q_1(n,N)},...,\xi_{q_{s}(n,N)}).
\]
Recall next the following result from Chapter 3 in \cite{book}, which was stated there as Theorem 3.3.4, and is reformulated here for the function $G$:
\begin{theorem}\label{thm3.3.2} Let 
$\xi^{(i)}=\{\xi^{(i)}_n: n\geq 0\}$, $i=1,2,...,s$ 
be independent copies of the random sequence $\xi=\{\xi_n,\, m\geq 0\}$.
Suppose that the conditions of Theorem \ref{VarThm} hold true and
$\sig^2=\lim_{N\to\infty} N^{-1}\bbE(G_N)^2$ exists. Let $a_i,b_i$ be the numbers from (\ref{linear}). Set
\[
U_{n,N}=G(\xi^{(1)}_{a_1n+b_1N},\xi^{(2)}_{a_2n+b_2N},...,\xi^{(s)}_{a_\ell n
+b_\ell N})
\]
$\Sig_N=\sum_{1\leq n\leq N}U_{n,N}$
and 
\[
v^2=\lim_{N\to\infty}N^{-1}\bbE(\Sig_N)^2.
\]
Then, the limit $v^2$ exists, and $\sig^2>0$ if and only if $v^2>0$.
The latter condition holds true if and only if there exists no 
representation of the form
\begin{equation}\label{Cob0}
U_n=Z_{n+1}-Z_n,\,\, n=0,1,2,...
\end{equation}
where $U_n=G(\xi^{(1)}_{a_1 n},...,\xi^{(s)}_{a_{s}n})$, $\{ Z_n\}_{n=0}^\infty$ is a square integrable weakly (i.e. in 
the wide sense) stationary process. Furthermore, $v^2=0$ if and only
if Var$\Sig_N$ is bounded.
\end{theorem}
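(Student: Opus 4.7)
The plan is to exploit that because the $\xi^{(i)}$ are independent copies, the process $\{U_{n,N}\}_{n\ge 0}$ is weakly (second-order) stationary in $n$, with covariance $c(k):=\mathrm{Cov}(U_{0,N},U_{k,N})$ that does not depend on $N$: using cross-copy independence together with (\ref{xi dist}), $c(k)$ reduces to a sum over non-empty ANOVA blocks $I$ of pair covariances of the form $\mathrm{Cov}(G_I((\xi^{(i)}_0)_{i\in I}),G_I((\xi^{(i)}_{a_ik})_{i\in I}))$, each of which depends only on the differences $a_ik$. Applying the mixing estimate from Proposition \ref{1.3.14} to each copy gives $|c(k)|\le C\,\tilde\upsilon(|k|)$ with $\tilde\upsilon$ summable under Assumption \ref{Mix2}, so the identity $\mathrm{Var}(\Sig_N)=\sum_{|k|<N}(N-|k|)c(k)$ and dominated convergence give the existence of $v^2=\lim_N N^{-1}\mathrm{Var}(\Sig_N)=\sum_{k\in\bbZ}c(k)$.

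For the coboundary characterization and the equivalence $v^2=0\iff\mathrm{Var}(\Sig_N)$ bounded, I would use the spectral representation $c(k)=\int_{-\pi}^{\pi}e^{ik\te}f(\te)\,d\te$ with continuous density $f$, together with the identity $\mathrm{Var}(\Sig_N)=\int\sin^2(N\te/2)/\sin^2(\te/2)\,f(\te)\,d\te$. The telescoping $\Sig_N=Z_{N+1}-Z_1$ immediately yields the easy direction (coboundary $\Rightarrow$ bounded variance $\Rightarrow v^2=0$). For the converse, bounded variance is equivalent to $\int f/\sin^2(\te/2)\,d\te<\infty$, which lets one define the required $Z_n$ through its spectral measure $(e^{i\te}-1)^{-1}\,dW(\te)$ where $dW$ is the spectral measure of $\{U_n\}$. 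The remaining step $v^2=0\Rightarrow\int f/\sin^2(\te/2)\,d\te<\infty$ uses the product Markov/dynamical structure of $\{\xi^{(i)}\}$ via a Gordin-type martingale--coboundary decomposition: with geometric mixing delivered by Assumption \ref{Mix2}, $U_n$ decomposes as a reverse-martingale difference plus an $L^2$-coboundary, and vanishing of $v^2$ forces the martingale part to be zero.

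For $\sig^2>0\iff v^2>0$, I would use the Hoeffding/ANOVA decomposition $G=\sum_I G_I$ in both $\bbE[G_N^2]$ and $\bbE[\Sig_N^2]$. In the independent-copies setup, cross-copy independence together with ANOVA orthogonality kills all $I\ne J$ cross-blocks exactly. In the original-process setup, the same cross-blocks are killed up to $o(N)$ by mixing (Proposition \ref{1.3.14}) applied to the ordered time tuples $\{a_in+b_iN,\,a_jm+b_jN\}$ delivered by Proposition \ref{Prop Ord 2}. What remains in both cases is a diagonal sum $\sum_I$; the contribution from block $I$ to $\sig^2$ and to $v^2$ vanishes simultaneously, each being $0$ iff $G_I$ is a coboundary of the product system associated with $(\xi^{(i)})_{i\in I}$. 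Any discrepancy arising from ``collisions'' $a_in+b_iN=a_jm+b_jN$ (controlled by Assumption \ref{A1}) produces only non-negative terms that vanish under the same coboundary condition. Hence $\sig^2$ and $v^2$ are positive together.

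The main obstacle is the implication $v^2=0\Rightarrow\int f/\sin^2(\te/2)\,d\te<\infty$ in the coboundary direction: for a generic weakly stationary sequence this is false (one can have $\mathrm{Var}(\Sig_N)\sim\log N$ with $v^2=0$), and the argument genuinely requires the product Markov/dynamical structure of the $\xi^{(i)}$ via Gordin's decomposition. The other ingredients --- existence of $v^2$, the spectral identity, the easy telescoping, and the ANOVA-based reduction of $\sig^2$ to $v^2$ --- are essentially routine consequences of the mixing estimates and the polynomial ordering already developed in Section \ref{Sec2} and Lemma \ref{b lemma}.
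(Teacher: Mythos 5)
Your reduction of $\Sig_N$ to an $N$-independent weakly stationary sequence (cross-copy independence plus shift-invariance of the law of $(\xi_n,\xi_m)$ makes $c(k)=\mathrm{Cov}(U_{0,N},U_{k,N})$ depend only on $k$) is exactly the observation the paper makes, and the spectral framework for the coboundary characterization is the right one.

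However, the ``main obstacle'' you identify --- that $v^2=0\Rightarrow\int f/\sin^2(\te/2)\,d\te<\infty$ needs the Markov/dynamical structure via Gordin --- is a misconception in this setting. Under Assumption~\ref{Mix2}, the correlations satisfy $\sum_k(k+1)|c(k)|<\infty$ (the paper notes this explicitly), and then $v^2=0$ already forces $\mathrm{Var}(\Sig_N)$ to be bounded by the elementary identity
\[
\mathrm{Var}(\Sig_N)\;=\;N\sum_{|k|<N}c(k)-\sum_{|k|<N}|k|\,c(k)\;=\;-N\sum_{|k|\geq N}c(k)\;-\;\sum_{|k|<N}|k|\,c(k),
\]
whose first term is $o(1)$ (since $N\sum_{|k|\geq N}|c(k)|\leq\sum_{|k|\geq N}|k||c(k)|\to 0$) and whose second term is absolutely bounded. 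Bounded variance then yields the coboundary representation by Bradley's Theorem~8.6 for general weakly stationary sequences, which is precisely the result the paper cites; your $\log N$ counterexample would violate the summability, so it does not arise here. No martingale--coboundary decomposition of $U_n$ is used or needed.

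The genuinely nontrivial part of the statement is the equivalence $\sig^2>0\iff v^2>0$, and here your proposal is too vague to close the argument. The diagonal (block~$I$) contributions to $\sig^2$ and to $v^2$ are \emph{a priori different quantities}: in $\sig^2$ the arguments of the block summand $G_I$ are drawn from the \emph{same} process $\xi$ at the times $a_in+b_iN$, whereas in $v^2$ they come from independent copies $\xi^{(i)}$; ANOVA orthogonality plus mixing kills the \emph{cross}-block terms, but does not by itself show that the two sets of diagonal contributions are comparable. The paper's argument (sketched for $s=2$, with the full version in~\cite{book} and~\cite{HK2}) handles this via the crossover structure: split the range of $n$ at $\al N$ with $\al=(b_2-b_1)/(a_1-a_2)$ (where the ordering of $q_1(n,N)$ and $q_2(n,N)$ flips), on each side peel off the conditionally centered piece $G_2$, show cross-piece expectations are $O(1)$ by mixing, and then iteratively subdivide each piece in $n$ to show $\bbE A_N^2=\bbE\tilde A_N^2+O(1)$ where $\tilde A_N$ has an independent copy in one coordinate --- thereby reducing the coupled second moment to the independent-copy one summand by summand. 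Your proposal asserts that the block-$I$ contributions ``vanish simultaneously'' but supplies no mechanism for that comparison, which is where the real work lies.
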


For readers' convenience, let us give some of the ideas behind the proof of Theorem \ref{thm3.3.2}.
Let us first explain (\ref{Cob0}).
 Since the above copies are independent and the distribution of $(\xi_{n},\xi_m)$ depends only on $n-m$
 we have 
\[
\bbE\Sig_N^2=\bbE\big(\sum_{n=1}^N U_n)^2.
\] 
Observe next that the seqeunce $\big\{G(\xi^{(1)}_{a_1 n},...,\xi^{(s)}_{a_{s}n})\big\}$ is centered and stationary in the wide sense. We recall now the following result, which is a combination of  Proposition 8.3 and Theorem 8.6 of \cite{Br} (modified for a one sided sequence): let  $\{U_n\}$ be a centered sequence of weakly stationary random variables  with correlations $b_n=\text{Cor}(U_n,U_0)$ satisfying $\sum_{n=0}^{\infty}(n+1)|b_n|<\infty$. Then  the limit $v^2=\lim_{n\to\infty}\frac1n\bbE(\sum_{j=0}^{n-1}U_n)^2$ exists, and it is positive if and only if there exists a weakly stationary process $Z_n$ so that $U_n=Z_{n+1}-Z_n$ for any $n\geq0$.
We conclude that $\sig^2=0$ if and only there exists a stationary in the wide sense sequence $\{Z_n\}$ of random variables so that for any $n$,
\begin{equation}\label{Coboundary}
G(\xi^{(1)}_{a_1 n},...,\xi^{(s)}_{a_{s}n})=Z_{n+1}-Z_n.
\end{equation}
We note that the correlations of $\{U_n\}$ indeed satisfy the above conditions because of the definition of $G$, (\ref{F Hold}), (\ref{F bar}) and  Assumption \ref{Mix2}. We also remark that $U_n$ is indeed centered because of our assumption that $\bar F=0$.

Next, for readers' convenience we will give the idea behind the equivalence $\sig^2=0\iff v^2=0$ in the case when $s=2$,  namely when there are only two linear polynomials $q_1(n,N)=a_1n+b_1N$ and $q_2(n,N)=a_2n+b_2N$. In the case when $b_1=b_2$ we have
\[
\mathbb E[G(\xi_{q_1(n,N)},\xi_{q_1(n,N)})G(\xi_{q_1(m,N)},\xi_{q_2(m,N)})]=
E[G(\xi_{a_1n},\xi_{a_2n})G(\xi_{a_1m},\xi_{a_2m})]
\]
and therefore the whole problem is reduced to the case when $q_1$ and $q_2$ do not depend on $N$, and this case was covered in Section 4 of \cite{HK2}. We assume without a loss of generality that $b_2>b_1$. The proof in the case when $a_1\leq a_2$  proceeds exactly as in \cite{HK2}. Indeed, we can use the same block partitions which were construted in \cite{HK2} in the case when $q_1(n)=(a_1+b_1)n$ and $q_2(n)=(a_2+b_2)n$. The distance between $(a_2+b_2)n$ and $(a_1+b_1)_2$ is smaller than the distance between $a_2n+b_2N$ and $a_1n+b_1N$, and so the proof in \cite{HK2} proceeds similarly.
We note that only the case when $q_i(n)=in$ was considered in \cite{HK2}, but the general case when all $q_i$'s have the form $q_i(n)=l_in$, $i=1,2,...,s$, $l_i\in\bbN$ follows from this case by considering functions $F(x_1,...,x_{l_{s}})$ which depend only on the variables $x_{l_i}$.

In view of the above, we will focus here on the case when $a_1>a_2$. We will show that, in fact, in this case we have $v^2=\sig^2$.
Consider the functions $G_1$ and $G_2$ given by
\[
G_1(x)=\int G(x,y)d\mu(y),\text{ and }\, G_2(x,y)-G_1(x).
\]
Then 
\begin{equation}\label{Conditional}
\int G_2(x,y)d\mu(y)=0 \text{ for any } x.
\end{equation}
Set $\alpha=\frac{b_2-b_1}{a_1-a_2}$. Then $\alpha$ satisfies that $q_1(n,N)\leq q_2(n,N)$ for any $1\leq n\leq \alpha N$, while $q_2(n,N)\leq q_1(n,N)$ for any $N\geq n\geq \alpha N$. Using the above notations we have 
\[
\sum_{n=1}^NG(\xi_{q_1(n,N)},\xi_{q_1(n,N)})=\sum_{1\leq i,j\leq 2}S_{i}^{(j)}(N)
\]
where 
\begin{eqnarray*}
S^{(1)}_1(N)=\sum_{n=1}^{\alpha N}G_1(\xi_{a_1n+b_1N}),\,
S^{(1)}_2(N)=\sum_{n=1}^{\alpha N}G_2(\xi_{a_1n+b_1N},\xi_{a_2n+b_2N})
\\
S^{(2)}_1(N)=\sum_{\alpha N<n\leq N}G_1(\xi_{a_2n+b_2N}),\,
S^{(2)}_2(N)=\sum_{\alpha N<n\leq N}G_2(\xi_{a_2n+b_2N},\xi_{a_1n+b_1N}).
\end{eqnarray*}
Using (\ref{Conditional}) we get that for $i=1,2$
\begin{equation}\label{C}
|\mathbb E[S_1^{(i)}(N)S_2^{(2)}(N)]|\leq C
\end{equation}
for some constant $C>0$. 
In the case when $\xi_n$'s were independent for any $n<\alpha N$ and $m>\alpha N$ we have
\begin{eqnarray}
\mathbb E[G_1(\xi_{a_1n+b_1N})G_2(\xi_{a_2m+b_2N},\xi_{a_1m+b_1N})]\\=
\mathbb E[G_1(\xi_{a_1n+b_1N})\mathbb E\big[G_2(\xi_{a_2m+b_2N},\xi_{a_1m+b_1N})|\xi_{a_1n+b_1N}]\big]=0
\end{eqnarray}
since the conditional expectation inside the expectation vanishes due to (\ref{Conditional}). This shows that we can take $C=0$ in the independent case when $i=1$, and similar idea yields the same with $i=2$. When $\xi_n$'s are not independent 
we can use our mixing and approximation conditions from Assumption \ref{Mix2} in order to derive (\ref{C}) (applying Lemma \ref{b lemma}).
Similar arguments yield that $|E[S^{(1)}_1(N)S^{(1)}_2(N)]|$ and $|E[S^{(1)}_2(N)S^{(2)}_2(N)]|$ are bounded in $N$.
We conclude that 
\[
\mathbb ES_N^2=c_N+\sum_{1\leq,j,\leq 2}\mathbb E\big(S_{i}^{(j)}\big)^2
\]
where $c_N$ is some bounded sequence. The above equality also holds true when we replace $\xi_{a_1n+b_1N}$ with 
$\tilde\xi_{a_1n+b_1N}$, where $\tilde\xi$ is an independent copy of $\xi$.  It is enough to show that $E\big(S_{2}^{(i)}\big)^2$ behaves the same as in the case when the left and right coordinates are independent. We will explain why this is true for $j=1$. Write 
\[
S_2^{(1)}(N)=A_N+B_N
\]  
where $A_N=\sum_{n=1}^{M_N}G_2(\xi_{a_1n+b_1N},\xi_{a_2n+b_2N})$ and $B_N=S_2^{(1)}(N)-A_N$. Here $M_N$ satisfies that 
$a_1M_N+b_1N\thickapprox(b_2-\ve)N$, where $\ve>0$ is sufficiently small (we can just take $M=\beta N$ for an appropriate $\beta<\alpha$). This means that the difference between the largest index appearing in the left coordinate of $G_2$ in the sum $A_N$ is much smaller than the minimal index appearing in the right coordinate. Therefore, 
\[
\mathbb EA_N^2=c_N+\mathbb E (\tilde A_N)^2
\]
where $c_N$ is some bounded sequence and $\tilde A_N$ is defined similarly to $A_N$ but with $\tilde\xi_{a_1n+b_1N}$ in place of
$\xi_{a_1n+b_1N}$.
Using the same reasoning which lead to (\ref{C}), we obtain that $|\mathbb E A_Nb_N|$ is bounded in $N$. Therefore,
\[
\mathbb E(S_2^{(1)}(N))^2=\mathbb E (\tilde A_N)^2+\mathbb EB_N^2.
\]
Now we decompose $B_N$ into two blocks. On the first block the maximal index in the left coordinate will is  smaller than the minimal index  in the right coordinate, and  the expectation of the product of the two blocks is bounded in $N$. Proceeding this way we derive that $\sig^2=v^2$.\qed

The next result we have is the following
\begin{theorem}\label{VarEqThm}
Suppose that any two non-linear polynomials $q_i$ and $q_j$ are not equivalent. Then $D^2=0$ if and only if (\ref{Coboundary}) holds true and for any $\ve$ and $i$ so that $\deg q_{\ve(i)}>0$ we have $F_{\ve,i}^2(x_1,...x_i)=0$ for $\mu^i=\mu\times\cdots\mu$ almost any $(x_1,...,x_i)$.
\end{theorem}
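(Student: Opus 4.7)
The plan is to decompose $D^2 = D_0^2 + \sum_{t=1}^w D_t^2$ according to the classification preceding the theorem and to characterize when each summand vanishes. Under the hypothesis that no two non-linear $q_i$'s are equivalent, every class $A_t$ ($t \ge 1$) consists of a single polynomial $q_{j_t}$. Each $D_t^2$ will turn out to be a sum of non-negative terms, so $D^2 = 0$ is equivalent to $D_0^2 = 0$ together with $D_t^2 = 0$ for every $t \ge 1$.

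For $t \ge 1$ I would show that $D_t^2$ reduces to its diagonal. The only terms $D_{\ve,\tau,i,j}$ contributing are those with $\ve(i) = \tau(j) = j_t$; denote by $i_t(\ve)$ the unique position at which $\ve$ sends something to $j_t$. When $\ve = \tau$ (so $i = j = i_t(\ve)$), Proposition \ref{Var prop}(ii) gives $D_{\ve,\ve,i_t(\ve),i_t(\ve)} = c_\ve \int F_{\ve,i_t(\ve)}^2\, d\mu^{i_t(\ve)}$. When $\ve \neq \tau$, the sets $I_\ve(N)$ and $I_\tau(N)$ are disjoint by Proposition \ref{Prop Ord 2}, so any $n \in I_\ve(N)$, $m \in I_\tau(N)$ satisfy $n \neq m$, both of size comparable to $N$. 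Since $d := \deg q_{j_t} \geq 2$, the mean value theorem yields $|q_{j_t}(n,N) - q_{j_t}(m,N)| \geq c N^{d-1} |n-m|$, and plugging this into Lemma \ref{b lemma} together with the uniform-in-$N$ summability (\ref{Gamma}) and the decay (\ref{gamma N lim}) gives $N^{-1}\sum_{n,m} |b_{\ve,\tau,i_t(\ve),i_t(\tau),N}(n,m)| \to 0$. Hence $D_t^2 = \sum_\ve c_\ve \|F_{\ve,i_t(\ve)}\|_{L^2(\mu^{i_t(\ve)})}^2$, which is $0$ precisely when $F_{\ve,i_t(\ve)} \equiv 0$ a.s.\ for every $\ve$ with $c_\ve > 0$.

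For the linear part, I would use the identification $D_0^2 = \lim_{N\to\infty} N^{-1} \bbE G_N^2$ already established preceding the theorem (obtained by observing that $\sum_{i=1}^s F_{\ve,i} = G_\ve$, since integrating $F$ over the non-linear variables produces $G$, together with the sublinear-boundary bound via Lemma \ref{VarLemma}). Theorem \ref{thm3.3.2} then yields $D_0^2 = 0$ if and only if the coboundary representation (\ref{Coboundary}) holds. Combined with the previous step, this gives the stated characterization (with the understanding that the condition $\deg q_{\ve(i)} > 0$ is effective only at the non-linear positions produced by the analysis, i.e., $\deg q_{\ve(i)} > 1$; the linear positions are absorbed into the coboundary statement).

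The main technical obstacle is the cross-term estimate for $\ve \neq \tau$: one must ensure that $|q_{j_t}(n,N) - q_{j_t}(m,N)| \geq c N^{d-1} |n-m|$ holds uniformly on a set whose complement contributes negligibly. Near the boundaries of $I_\ve(N)$ and $I_\tau(N)$, and on the exceptional set $B_N$ of Proposition \ref{Prop Ord 2} of sublinear cardinality, one falls back on Lemma \ref{VarLemma} to bound the boundary contribution by $o(N)$ after normalization; on the bulk, where $n,m \in [\delta N, N]$, the mean value inequality supplies the required polynomial-in-$N$ spacing and $\upsilon_N(c N^{d-1} k)$ is summable in $k$ with sum tending to $0$ as $N \to \infty$, thanks to $\te > \tfrac{4w}{w-2}$.
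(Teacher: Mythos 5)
Your proposal is correct and follows essentially the same route as the paper: decompose $D^2$ into $D_0^2$ plus contributions from non-linear equivalence classes, invoke Proposition \ref{Var prop} to kill the terms with non-equivalent non-linear polynomials, observe that under the hypothesis each non-linear class is a singleton so equivalence forces $q_{\ve(i)}=q_{\tau(j)}$, use disjointness of $I_\ve(N)$ and $I_\tau(N)$ to kill the off-diagonal cross-terms $\ve\ne\tau$, and apply Theorem \ref{thm3.3.2} for the linear part $D_0^2$. The only difference is cosmetic: you give an explicit mean-value-theorem estimate $|q_{j_t}(n,N)-q_{j_t}(m,N)|\geq c_\delta N^{d-1}|n-m|$ feeding into Lemma \ref{b lemma} and (\ref{gamma N lim}), whereas the paper reuses the machinery already built in the proof of Proposition \ref{Var prop} (namely that the constant $c_{\ve,i,\tau,j}(s,t)$ is an asymptotic density coming from (\ref{Sum over}), which vanishes when $c=1$ and the index sets $I_\ve$, $I_\tau$ are disjoint). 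Both work; yours is slightly more self-contained, the paper's is shorter. You are also right that the intended condition in the statement should read $\deg q_{\ve(i)}>1$ rather than $>0$, matching the formula $D^2=\sum_{\ve,i:\,\deg q_{\ve(i)}>1}s_\ve\int F_{\ve,i}^2\,d\mu^i+D_0^2$ derived in the proof.
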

\begin{proof}
Let $\deg q_{\ve(i)}$ and $\deg q_{\tau(j)}$ be two non-linear polynomials. If they not equivalent then by Proposition \ref{Var prop} we have 
\[
D_{\ve,\tau,i,j}(s,t)=0
\]
for any $s$ and $t$. Suppose that they are equivalent. Then, using the assumptions of Theorem \ref{VarEqThm}, we must have 
$q_{\ve(i)}=q_{\tau(j)}$. We claim that in this case $\ve=\tau$.
 Indeed, since $q_{\ve(i)}=q_{\tau(j)}$ then we have $c=1$ in (\ref{eps tau limit}). The sets $I_\ve(N)$ and $I_\tau(N)$ are disjoint when $\ve\not\not=\tau$, and in this case the sum in (\ref{Sum over}) is over a set of $n$'s whose asymptotic density is $0$. Therefore, either $D_{\ve,\tau,i,j}(t,s)=0$ for any $t,s$ or $\ve=\tau$, $i=j$ and 
 \[
D_{\ve,\tau,i,j}(1,1)=D_{\ve,\ve,i,i}(1,1)=s_\ve\int F_{\ve,i}^2\mu^i.
 \]
We conclude that
 \[
 D^2=\sum_{\ve,i:\,\deg q_{\ve(i)}>1}s_\ve\int F_{\ve,i}^2\mu^i+D_0^2
 \]
 and the proof of the proposition is completed using Theorem \ref{thm3.3.2}.
\end{proof}

\begin{remark}
The proof of Theorem \ref{VarEqThm} shows that if there exists one non-linear polynomial $q_{i_0}$ which is not equivalent to any other of the $q_j$'s (i.e. there exists a non-linear equivalence class which is a singleton), then 
\[
D^2=D_0^2+\sum_{\ve}s_{\ve}\int F_{\ve,\ve^{-1}(i_0)}^2(x_1,...,x_{\ve^{-1}(i_0)})d\mu(x_1)\cdots d\mu(x_{\ve^{-1}(i_0)})+
\del^2
\]
where $\del^2>0$ is the part of the variance which comes from the non-linear polynomials $q_{\ve(i)}$ and $q_{\tau(j)}$ which do not equal $q_{i_0}$. Therefore, we derive that $D^2>0$ if one of the functions $F_{\ve,\ve^{-1}(i_0)}$ is not identically $0$ \,($\mu^{\ve^{-1}(i_0)}$-almost surely). Moreover, when there exists one class of equivalent non-linear polynomials for which $c=1$ in (\ref{eps tau limit}) for any $q_{\ve(i)},q_{\tau(j)}$ in this class, then the proof of Theorem \ref{VarEqThm} shows that $D^2>0$ if  for any $\ve$ the functions $\{u_{i}^{(\ve)}\}\to s_\ve \sum_{i: q_{\ve(i)}\in A}F_{\ve,i}(u_{i}^{(\ve)})$ vanish with respect to an appropriate probability measure $\ka_{A,\ve}$, where $A$ is  the underlying class and $\{u_{i}^{(\ve)}\}$ is a set of variables which depends on $\ve$.
 Indeed, when $q_{\ve(i)}$ and $q_{\tau(j)}$ are equivalent and non-linear and $c=1$ in (\ref{eps tau limit}) we get that 
\[
D_{\ve,\tau,i,j}=\mathbb I(\tau=\ve)s_\ve\int F_{\ve,i}F_{\ve,j}d\cM_{\ve,i,j,A}
\]
for some probability measure $\cM_{\ve,i,j,A}$ (which was constructed in the proof of Proposition \ref{Var prop}). It
remains to show that the measures $\cM_{\ve,i,j}$ are an appropriate marginal of a measure $\cM_{\ve,A}$ which does not depend on $i$ and $j$ (this is done as in Lemma 7.1 in \cite{HK3}, where we recall that $\ve$ is fixed). We conclude that if any two equivalent non-linear polynomials $q_i$ and $q_i$ satisfy $q_i(x,y)=q_j(x+r,y)+z$ for some $r,z\in\bbZ$, then $D^2=0$ if and only if (\ref{Coboundary}) holds true and the  functions $\sum_{i: q_{\ve(i)}\in A}F_{\ve,i}(u_{i}^{(\ve)})$ vanish with respect to $\cM_{\ve,A}$ for any $\ve$ and a non-linear class $A$.
\end{remark}

\begin{remark}
When the polynomials $q_i$ are ordered so that $q_1(n,N)\leq q_2(n,N)\leq...\leq q_\ell(n,N)$ for any sufficiently large $n$ and $N$, then Theroem 2.3 in \cite{HK3} is proved exactly in the same why. This means that there exists a family of functions $G_{\al}$ and a family of probability measures $\ka_\al$ so that 
 $D^2=0$ if and only if  (\ref{Coboundary}) holds true and $G_\al^2=0$,\,$\ka_\al$ almost surely, for any $\al$.
 The measures and the functions can be constructed explicitly, see Section 7 in \cite{HK3}.
\end{remark}


\section{A functional CLT via Stein's method}\label{CLT sec}
Let $(\Om,\cF,P),\,\cF_{n,m}$, $\{\xi_n: n\geq 0\}$ and $F$ 
be as described in Section \ref{Sec1} and
Consider the random function $\cS_N$ given by 
\[
\cS_N(t)=N^{-\frac12}\sum_{n=1}^{[Nt]}F(\Xi_{n,N}),\,\,\text{ where }\,\,\Xi_{n,N}=(\xi_{q_1(n,N)},\xi_{q_2(n,N)},...,\xi_{q_\ell(n,N)})
\]
and assume that the limiting covariances $b(t,s)$ from Theorem \ref{VarThm} exist. For each $n,m$ and $N$ set
\[
d_N(n,m)=\min_{1\leq i,j\leq\ell}|q_i(n,N)-q_j(m,N)|.
\]

Set $l(N)=3N^{\zeta_1}+3$ where  $\zeta_1=\frac{3w}{4b+2\te(w-2)}$ and the numbers $w$ and $\te$ come from Assumptions \ref{Moment Ass} and \ref{Mix2}. Recall our assumption in Theorem \ref{CLTthm} that $\te>\frac{4w}{w-2}$, which implies that 
\begin{equation}\label{xi 1 props}
\big(\te(1-\frac 2w)\big)^{-1}<\zeta_1<\frac14.
\end{equation}
The proceeding arguments will be true for any $\zeta_1$ satisfying (\ref{xi 1 props}).
Define $r(N)=[l(N)/3]$ and for each $m$ and $r$ set 
\[
\xi_{m,r}=\bbE[\xi_m|\cF_{m-r,m+r}]\,\,\text{ and }\,\,\Xi_{m,N,r}=(\xi_{q_1(n,N),r},\xi_{q_2(n,N),r},...,\xi_{q_\ell(n,N),r}).
\]
We also set 
\begin{equation*}
\cZ_{N}(t)=N^{-\frac12}\sum_{n=1}^{[Nt]}F(\Xi_{n,N,r(N)})\,\,\text{ and }\,\,
\cW_{N}(t)=\cZ_{N}(t)-\bbE \cZ_{N}(t).
\end{equation*}
Using (\ref{F Hold}) and Assumption \ref{Mix2}, we can approximate
approximate $F(\Xi_{n,N})$ by $F(\Xi_{n,N,r(N)})$.
Taking into account Lemmas \ref{VarLemma}  and \ref{expectation lemma}, this yields that 
\begin{equation}\label{E}
\lim_{N\to\infty}\bbE[\cW_N(t)\cW_N(s)]=\lim_{N\to\infty}\bbE[\cS_N(t)\cS_N(s)]=b(t,s)
\end{equation}
and that the weak convergence of $\cS_N(\cdot)$ follows from the weak convergence of $\cW_N(\cdot)$. 
 We refer the readers to the beginning of the proof of Theorem 1.6.2 from \cite{book} for the precise details.

In the rest of this section we will prove the weak invariance principle for  $\cW_N(\cdot)$ using a functional version of  Stein's method.
For each $N$  we define a graph $(V_N,E_N)$ on $V_N=\{1,...,N\}$ by declaring that $(n,m)\in E_N$ if 
\[
d_N(n,m)=\min_{1\leq i,j\leq\ell}|q_i(n,N)-q_j(m,N)|\leq l(N)=3N^{\zeta_1}+3.
\]
Then the size of a ball of radius $1$ in this graph does not exceed $K_1l(N)$ for some constant $K_1$ which does not depend on $N$. Indeed let $n\in V_N$ and let  $m$ be a neighbor of $n$. Then $d_N(n,m)\leq 3N^{\zeta_1}+3$ and therefore there exists $1\leq i,j\leq\ell$ so that 
\[
q_i(n,N)-q_j(m,N)\in\{-l(N),-l(N)+1,...0,...,l(N)-1,l(N)\}:=\Gam_N.
\]
Hence $m$ solves one of the equations
\[
q_i(n,N)-q_j(m,N)=g,\,g\in\Gamma_N.
\]
Each equation has at most $d_j=\deg q_j$ solutions, $m_{g,1},...,m_{g,u_g}$, where $u_g\leq d_j$. Therefore, the unit ball around $n$ is contained in the set 
\[
\{n\}\cup\bigcup_{g\in\gamma}\{m_{g,1},...,m_{g,u_g}\}
\]
whose cardinality does not exceed $2d^*(l(N)+1)+1$, where $d^*$ is the maximal degree of the polynomials $q_1,...,q_\ell$.

Now, for each $N$ and $n\in V_N$ set
\[
X_{n,N}=\frac{F(\Xi_{n,N,r})-\bbE F(\Xi_{n,N,r(N)})}{\sqrt N}.
\]
Then 
$
\cW_N(t)=\sum_{n=1}^{[Nt]}X_{n,N}.
$
We also set 
\[
b_N(t,s)=\bbE[\cW_N(t)\cW_N(s)]=\text{Cov}(\cW_N(t),\cW_N(s)).
\]

Next, for any two sub-$\sigma$-algebras $\cG,\cH$ of $\cF$ we will measure the dependence 
between $\cG$ and $\cH$ via the quantities
\begin{equation*}
\ve_{p,q}(\cG,\cH)
=\sup\big\{|\text{Cov}(g,h)|:\,\max(
\|g\|_{L^p}, \|h\|_{L^q})\leq 1\big\}
\end{equation*}
where in the definition of $\ve_{p,q}(\cG,\cH)$ we consider function $g$ and $h$ so that $g$ is measureable with respect to $\cG$ and $h$ is measureable with respect to $\cH$. We recall the following classical relations between $\alpha(\cG,\cH)$ (defined in \ref{general alpha mixing}) and $\ve_{p,q}(\cG,\cH)$ (see, for instance, Theorem A.5, Corollary A.1 and Corollary A.2
in \cite{HallHyde}): for any $p>1$, we have
\begin{eqnarray}\label{alRel.0}
\ve_{p,\infty}(\cG,\cH)\leq 6\big(\al(\cG,\cH)\big)^{1-\frac1p}\,\,
\text{ and }\,\,\ve_{p,q}(\cG,\cH)\leq 8\big(\al(\cG,\cH)\big)^{1-\frac1p-\frac 1q}
\nonumber
\end{eqnarray}
for any  $q>1$ such that $\frac 1p+\frac 1q<1$. 

Now, for any $A\subset V_N$, let $\cG_A$ be the $\sig$-algebra generated by the random vector $X_A=\{X_{n,N}:\,n\in A\}$.
For any $A,B\subset V_N$ and $1\leq p,q\leq\infty$ we will measure the dependence 
between $X_A$ and $X_B$ via the quantities
\begin{equation*}
\alpha(A,B):=\alpha(\cG_A,\cG_B)\,\,\text{ and }\,\,
\ve_{p,q}(A,B)
:=\ve_{p,q}(\cG_A,\cG_B).
\end{equation*}
Then by (\ref{alRel.0}) for any $A,B\subset V_N$ and $p>1$,
\begin{eqnarray}\label{alRel}
\ve_{p,\infty}(A,B)\leq 6\big(\al(\cG_A,\cG_B)\big)^{1-\frac1p}\,\,
\text{ and }\,\,\ve_{p,q}(A,B)\leq 8\big(\al(\cG_A,\cG_B)\big)^{1-\frac1p-\frac 1q}
\end{eqnarray}
for any  $q>1$ such that $\frac 1p+\frac 1q<1$. 

Now, for reader's convenience, we will restate Theorem 1.5.1 from \cite{book}:

\begin{theorem}\label{ThmFunc} 
Let  $p_0,q_0\geq1$ and set
\begin{equation}\label{d_M0}
\tau_N=d_1+d_2+d_3+d_4
\end{equation} 
where with $X_n=X_{n,N}$, $\sig_{n,m}=\bbE X_nX_m$,  $N_n=\{n\}\bigcup\{m:\,(n,m)\in E_N\}$ and $N_n^c=V_N\setminus N_n$,
\begin{eqnarray}
d_1=\sum_{n=1}^N\|X_n\|_{L^{p_0}}\ve_{p_0,\infty}(\{n\},N_n^c)\label{d1},\\
d_2=\sum_{n=1}^N\sum_{m\in N_n}
\|X_nX_m-\sig_{n,m}\|_{L^{q_0}}\ve_{q_0,\infty}
(\{n,m\},N_n^c\cap N_m^c),\label{d2}\\
d_3=\sum_{n=1}^N\sum_{m,k\in N_n}\big(
\bbE |X_nX_mX_k|+\bbE |X_nX_m|\bbE |X_k|\big),\label{d3}\\
d_4=\sum_{n=1}^N\sum_{m\in N_n^c}
\|X_n\|_{L^{p_0}}\|X_m\|_{L^{p_0}}\ve_{p_0,p_0}(\{n\},\{m\})\label{d4}.
\end{eqnarray} 
Suppose that there exists $\Gam>0$ such that 
\begin{equation}\label{UnifInt}
\|\cW_N(s)-\cW_N(t)\|_{L^2}^2\leq\Gam\frac{[Ns]-[Nt]}N
\end{equation}
for any $N\in\bbN$ and $0\leq t\leq s\leq 1$. Furthermore assume that the limits
$\lim_{N\to\infty} b_N(t,s)=b(t,s), s,t\in[0,1]$ exist and that 
\begin{equation}\label{tau lim}
\lim_{N\to\infty}\tau_N\ln^2N=0.
\end{equation}
Then
$\cW_N$  weakly converge in the Skorokhod space $D$ to a
 continuous centered Gaussian process $G$ with covariance function $b(\cdot,\cdot)$. 
\end{theorem}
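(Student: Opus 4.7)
The plan is to follow the classical two-step scheme: establish finite-dimensional convergence of $\cW_N$ and then tightness in $D[0,1]$. Since the limits $b(t,s)=\lim_N b_N(t,s)$ are assumed to exist, the target Gaussian process $G$ has prescribed covariance, and in view of (\ref{UnifInt}) passed to the limit it admits a continuous version. Both weak convergence of the finite-dimensional marginals and tightness in $D[0,1]$ are then needed, and together they yield weak convergence in $D$.

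For the finite-dimensional part, fix $0\le t_1<\cdots<t_k\le 1$ and write $\bigl(\cW_N(t_1),\dots,\cW_N(t_k)\bigr)=\sum_{n=1}^N X_{n,N}\mathbf{v}_n$, where $\mathbf{v}_n\in\bbR^k$ has $j$-th coordinate $\bbI(n\le[Nt_j])$. I would apply the multivariate Stein's method adapted to local dependence via the graph $(V_N,E_N)$. For a smooth test function $h:\bbR^k\to\bbR$, set up the Stein equation $\cA_\Sig\Phi=h-\bbE h(\mathbf{G})$, where $\cA_\Sig$ is the Ornstein-Uhlenbeck generator whose invariant measure is $\cN(0,\Sig)$ with $\Sig_{ij}=b_N(t_i,t_j)$. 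Substituting the partial sum and performing a third order Taylor expansion of $\nabla\Phi$ at each index $n$ around a version depending only on the complement $N_n^c$, the error $|\bbE\cA_\Sig\Phi(\cW_N)|$ decomposes into four families of remainders that, after standard manipulations, correspond precisely to $d_1,d_2,d_3,d_4$: $d_1$ measures how well $X_n\nabla\Phi$ factorizes through the covariance inequality $\ve_{p_0,\infty}(\{n\},N_n^c)$; $d_2$ controls the error in replacing $X_nX_m-\sig_{n,m}$ by zero in the Stein correction for $m\in N_n$; $d_3$ is the third order Taylor remainder inside each neighborhood; $d_4$ is the long-range covariance contribution for pairs with $m\notin N_n$. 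Barbour's rescaled Ornstein-Uhlenbeck construction yields $\|\nabla^j\Phi\|_\infty\le C(h)\ln^j N$, so the resulting bound is $O(\tau_N\ln^2N)$ and vanishes by (\ref{tau lim}).

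For tightness in $D[0,1]$, hypothesis (\ref{UnifInt}) gives a linear-in-increment $L^2$ bound for oscillations of $\cW_N$. This is the borderline case for the Billingsley moment criterion, so to close the gap I would combine it with the local dependence structure and the uniform $L^{p_0}$ boundedness of $X_{n,N}$ to derive, via a maximal inequality for locally dependent sums, a bound of the form $\bbE\sup_{t\le u\le s}|\cW_N(u)-\cW_N(t)|^2\le C\Gam(s-t)+o(1)$, which yields tightness by Aldous' criterion (or by standard chaining). The main obstacle I anticipate is controlling the regularity of $\Phi$: the bound $\|\nabla^j\Phi\|_\infty\le C(h)\ln^j N$ is the sharpest one compatible with (\ref{tau lim}), and degrading it even slightly would force stronger hypotheses on $\tau_N$. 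A secondary but necessary bookkeeping difficulty is tracking constants in the four-term decomposition so that every correction fits inside $\tau_N\ln^2N$; this relies on the earlier observation that $|N_n|\le K_1 l(N)$ uniformly in $n$ and $N$, itself a consequence of the polynomial structure of the $q_i$'s.
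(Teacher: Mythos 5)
Your proposal and the paper's proof diverge at the very first step. The paper cites Barbour \cite{Barb} and Barbour--Janson \cite{Barb1} (as recorded in Theorem 1.5.1 of \cite{book}) and uses Barbour's \emph{functional} Stein's method \emph{directly on path space}: one solves a Stein equation whose operator is the generator of an infinite-dimensional Ornstein--Uhlenbeck process with stationary law $G$, and one bounds $|\bbE h(\cW_N)-\bbE h(G)|$ for a convergence-determining class of twice Fr\'echet-differentiable functionals $h:D[0,1]\to\bbR$. Finite-dimensional marginals and tightness are not treated as separate steps; the single functional bound handles both. The $\ln^2 N$ appearing in (\ref{tau lim}) is a signature of this path-space argument: it arises from truncating the resolvent integral $\phi=-\int_0^\infty\bigl(T_th-\bbE h(G)\bigr)dt$ at time $O(\ln N)$, since the second Fr\'echet derivative of the Stein solution is not uniformly bounded and only a log-in-$N$ control is available. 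In a genuinely finite-dimensional multivariate Stein computation (which is what you describe for the marginals) the derivatives of the Stein solution are bounded by constants depending only on the dimension $k$, the test function $h$, and the smallest eigenvalue of $\Sigma_N$ --- there is no $N$-dependent $\ln N$ factor. So the $\ln^j N$ bound you quote for $\|\nabla^j\Phi\|_\infty$ simply does not belong to the scheme you lay out; you are borrowing a constant from one argument and plugging it into a different one.

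The more serious gap is your tightness step. You acknowledge that the $L^2$ linear-in-increment bound (\ref{UnifInt}) is exactly on the Billingsley borderline, and you propose to repair it with a maximal inequality giving $\bbE\sup_{t\le u\le s}|\cW_N(u)-\cW_N(t)|^2\leq C\Gam(s-t)+o(1)$. This does not close the gap: the Billingsley/Chentsov criterion needs an exponent strictly greater than $1$ in the increment $|s-t|$, and a maximal inequality merely replaces a pointwise increment bound by a supremum bound of the \emph{same} order in $|s-t|$, so you remain at the borderline; moreover the additive $o(1)$ term does not shrink with $s-t$ and so cannot be used inside an Aldous- or chaining-type estimate that requires uniformity over small windows. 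Without raising the moment order or introducing a strictly better modulus, this step simply fails. In Barbour's approach, (\ref{UnifInt}) is not used for a separate tightness argument at all --- it is used inside the Taylor remainder estimate for the Stein functional, to control the discrepancy between the discontinuous process $\cW_N$ and the smooth Gaussian target. If you want to reconstruct the paper's proof, you should abandon the marginal-plus-tightness decomposition, work with functionals on $D$ (or on $C$, comparing $\cW_N$ to its piecewise linear interpolant using (\ref{UnifInt})), and let the four terms $d_1,\dots,d_4$ emerge from the first- and second-order Fr\'echet Taylor expansion against the local dependency neighborhoods $N_n$, with the $\ln N$ factors coming from the resolvent truncation.
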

Theorem \ref{ThmFunc} is essentialy due to A.D. Barbour, and it follows from the arguments in \cite{Barb}
and \cite{Barb1} (see the proof of Theorem 1.5.1 in \cite{book}). 
 
In the rest of this section we will show that all the conditions of Theorem \ref{ThmFunc} are satisfied with
\[
p_0=2q_0=w
\] 
where $w$ comes from Assumption \ref{Moment Ass}. Then there exists a constant $C>0$ so that for any $n$ we have
\begin{equation}\label{X n bound}
\|X_n\|_w\leq CN^{-\frac12}.
\end{equation}
That the covariances converge we have already shown.
We claim next that there exists $C>0$ such that 
\begin{equation}\label{1-st}
\text{Var}(\cW_N(s)-\cW_N(t))\leq C^2\frac{[Ns]-[Nt]}N
\end{equation}
for any $N\in\bbN$ and $0\leq t\leq s\leq 1$. The arguments proceeding (\ref{E}) yield that
it is enough to prove that (\ref{1-st}) holds true with $\cS_N$ in place of $\cW_N$.  As in (\ref{E}), we refer the readers to the proof of Theorem 1.6.2 in \cite{book} for the exact technical details.
By Proposition \ref{1.3.14}
there exists a constant $c_0>0$ such that 
for any $n,m\in\bbN$,
\[
|\text{Cov} (F(\Xi_{n,N}),F(\Xi_{m,N}))|\leq c_0\tau([\frac13 d_N(n,m)])
\]
where $\tau(k)=\phi_k^{1-\frac1w}+\be_{q,k}^\ka$ Observe that by Assumption \ref{Mix2} we have $\tau(k)\leq dk^{-(1-\frac1w)\te},\,k\geq 0$. The assumptions in Theorem \ref{CLTthm} imply that $\te(1-\frac1b)>1$, and therefore
\[
\sum_{k=0}^\infty\tau(k)<\infty.
\]
Let $d^*$ be the maximal of the polynomials $q_i$. Then
for any $k\geq 0$ and $n\in\bbN$
there exist at most $2\ell^2d^*$ natural $m$'s such that $d_N(n,m)=k$.  
Hence,  for any $n\in\bbN$ we have
\[
\sum_{m=1}^\infty|\text{Cov} (F(\Xi_{n,N}),F(\Xi_{m,N}))|\leq 6\ell^2d^*c_0\sum_{s=0}^\infty\tau(s):=A<\infty
\]
for any natural $n$, which implies that
\begin{equation*}
\text{Var}(\cS_N(s)-\cS_N(t))
\leq N^{-1}\sum_{n=[Nt]}^{[Ns]}\sum_{m=[Nt]}^{[Ns]}|\text{Cov} (F(\Xi_{n,N}),F(\Xi_{m,N}))|
\leq A\frac{[Ns]-[Nt]}N
\end{equation*}
implying  (\ref{1-st}) with $\cS_N$ in place of $\cW_N$. In that above arguments we have used Assumption \ref{Mix2} (and that  $\te(1-\frac1w)>1$) in order to derive that $A$ is indeed finite. 

It remains to show that $\lim_{N\to\infty}\tau_N\ln^2N=0$. We begin with estimating $d_1$ and $d_4$.  Let $n\in\bbN$ and set 
\[
A=\{n\}\,\,\text{ and } B=N_n^c.
\]
 Let us estimate 
$
\alpha(A,B).
$
Consider the sets 
\[
\Gam_1=V=\{q_{i}(n,N):\,1\leq i\leq\ell\},\,\,\text{ and }\,\,\Gam_2=\{q_{i}(m,N):\,1\leq i\leq\ell,\,m\in N_n^c\}.
\]
Then for each $\gam_1\in\Gam_1$ and $\gam_1\in\Gam_2$ we have
\[
|\gam_1-\gam_2|>l
\]
and therefore, there exist disjoint sets $Q_1,...,Q_L$,\,$L\leq 2\ell+1$ so that
\[
\Gam_1\cup\Gam_2=\bigcup_{i=1}^L Q_i,
\]
 each one of the $Q_i$'s is contained either in $\Gam_1$ or in $\Gam_2$ 
and
\[
q_i+r\leq q_{i+1}
\]
for any $1\leq i\leq L-1$, $q_i\in Q_i$ and $q_{i+1}\in Q_{i+1}$.  Consider the random vector $U=(U_1,...,U_L)$  where for each $i$,
\[
U_i=\{\xi_{m,r(N)}:\,m\in Q_i\}.
\]
Let $\{\cC_1,\cC_2\}$ be the partition of $\Gam_1\cup\Gam_2$ given by 
\[
\cC_i=\bigcup_{j:Q_j\subset\Gam_i}Q_j,\,i=1,2.
\]
Then $\alpha(A,B)=\alpha\big(\sig\{U(\cC_1),U(\cC_2)\}\big)$ and by Proposition \ref{1.3.11} we have
\[
\alpha(A,B)=\alpha\big(\sig\{U(\cC_1),U(\cC_2)\}\big)\leq L\phi(r(N))
\]
where $U(\cC_i)=\{U_j:\,j\in\cC_i\}$ and $\sig\{U(\cC_i)\}$ is the $\sig$-algebra generated by $U(\cC_i)$, $i=1,2$. We conclude from (\ref{alRel}), (\ref{d1}), (\ref{d4}) and (\ref{X n bound}) that there exists a constants $C_1$ and $C_4$ so that 
\begin{equation}\label{d1 d4 est}
d_1\leq C_1 N^{\frac12}\big(\phi(r(N))\big)^{1-\frac 1w}\,\,\text{ and }\,\,d_4\leq C_4 N\big(\phi(r(N))\big)^{1-\frac 2w}.
\end{equation}

Next, we will estimate $d_2$. Let $n,m\in V_N$ be so that $m\in N_n$. Set 
\[
A=\{n,m\}\,\text{ and }\,\,B=N_n^c\cap N_m^c.
\]
Let use first estimate $\alpha(A,B)$. 
Consider the sets $\Gam_1=\{q_{i}(s,N):\,1\leq i\leq\ell,\,s=n,m\}$ and
\[
\Gam_2=\{q_{i}(m,N):\,1\leq i\leq\ell,\,m\in N_n^c\cap N_m^c\}.
\]
Then for any $\gam_1\in\Gam_1$ and $\gam_2\in\Gam_2$ we have
\[
|\gam_1-\gam_2|>l
\]
and therefore, there exist disjoint sets $Q_1,...,Q_L$,\,$L\leq 4\ell+1$ so that
\[
\Gam_1\cup\Gam_2=\bigcup_{i=1}^L Q_i,
\]
 each one of the $Q_i$'s is contained either in $\Gam_1$ or in $\Gam_2$ 
and
\[
q_i+r(N)\leq q_{i+1}
\]
for any $1\leq i\leq L-1$, $q_i\in Q_i$ and $q_{i+1}\in Q_{i+1}$. Using this partition, exactly as in the estimates of $d_1$ and $d_4$ we obtain  that
\begin{equation}\label{alpha d1}
\alpha(A,B)\leq L\phi(r(N)).
\end{equation} 
We conclude from (\ref{alRel}), (\ref{d2}) and  (\ref{X n bound}) that there exists a constant $C_2$ so that  
\begin{equation}\label{d2 est}
d_2\leq C_2 N\big(\phi(r(N))\big)^{1-\frac 2w}.
\end{equation}

Finally, by (\ref{X n bound}) and the H\"older inequality, each one of the summands in $d_3$ does not exceed $CN^{-\frac32}$ for some $C>0$ and therefore 
\begin{equation}\label{d3 est}
d_3\leq C_3N^{-\frac12}(l(N))^2=C_3N^{-\frac12+2\zeta_1}
\end{equation}
where we used that $|N_n|\leq K_1 l(N)$ for some $K_1>0$ and any $n$ and $N$. Relying on (\ref{xi 1 props}), (\ref{d1 d4 est}), (\ref{d2 est}), (\ref{d3 est})
and on the inequality
\[
\phi(r(N))\leq d(r(N))^{-\theta}\leq dN^{-\te \zeta_1}
\]
where $d$ and $\te$ come from Assumption \ref{Mix2},
we conclude that (\ref{tau lim}) holds true
and the proof of Theorem \ref{CLTthm} is complete.


\section{Applications}\label{Applications}
In this section we will describe several type of processes $\{\xi_n\}$ fro which all the results stated in Section \ref{Sec1} hold true.
\subsection{Hidden Markov chains and related processes}
Let $\cX$ be a topological space and let $\cB$ be the space of all bounded measurable functions on $\cX$, equipped with the supremum norm $\|\cdot\|_\infty$. Let $R:\cB\to\cB$ be a positive operator so that $R\textbf{1}=\textbf{1}$, where $\textbf{1}$ is the function which takes the constant value $1$ (i.e. $R$ is a Markov-Feller operator). 
We assume here that $R$ has a stationary probability measure $\mu$ so that  for any $n\geq1$ and $g\in\cB$ we have
\begin{equation}\label{Conv}
\|R^ng-\mu(g)\|_\infty\leq \|g\|_\infty\tau(n)
\end{equation}
for some sequence $\tau(n)$ which converges to $0$ as $n\to\infty$.
 Let $\{\Upsilon_n\}$ be the stationary Markov chain with initial  distribution $\mu$, whose transition operator is $R$. Then the inequality (\ref{Conv}) holds true with $\tau(n)$ of the form $\tau(n)=Ce^{-cn},\,c,C>0$ for an aperiodic Markov chain $\{\Upsilon_n\}$ if, for instance, a version
of the Doeblin condition holds true (see, for instance, Section 21.23 in \cite{Br}).

Next, for any $0\leq n\leq m$ let $\cF_{n,m}=\sig\{\Upsilon_n,...,\Upsilon_m\}$ be the $\sig$-algebra generated by the random variables $\Upsilon_n,\Upsilon_{n+1},...,\Upsilon_m$. When $n$ is negative we set $\cF_{n,m}=\cF_{0,\max(0,m)}$.
The following result is well known (see \cite{Br}), but it has a short proof which is given  here for readers' convenience. 

\begin{lemma}\label{MC mix lemma}
Suppose that (\ref{Conv}) holds true.
Then the mixing coefficients $\phi(n)$ corresponding to the $\sig$-algebras $\cF_{n,m}$ defined above satisfy $\phi(n)\leq 2\tau(n)$.
\end{lemma}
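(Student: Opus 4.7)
The plan is to exploit the Markov property to reduce the $\phi$-mixing condition on pairs of $\sigma$-algebras to the operator convergence bound (\ref{Conv}) applied to conditional probabilities.

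First I would fix $k\in\bbZ$, $n\geq 1$, $\Gam\in\cF_{-\infty,k}$ with $P(\Gam)>0$, and $\Del\in\cF_{k+n,\infty}$. By the Markov property, for any $\Del\in\cF_{k+n,\infty}$ we have
\[
P(\Del\mid\cF_{-\infty,k})=P(\Del\mid\Upsilon_k),\quad P\text{-a.s.}
\]
The next step is to express this conditional probability via the transition operator $R$. By a standard monotone class / cylinder approximation (using that the product $\sigma$-algebra on $(\cX)^{\bbN}$ is generated by cylinders), it suffices to check the bound for $\Del$ a finite-dimensional cylinder, and then conclude by passing to the limit. For a cylinder set $\Del=\{\Upsilon_{k+n}\in A_0,\Upsilon_{k+n+1}\in A_1,\ldots,\Upsilon_{k+n+j}\in A_j\}$, an iterated application of the Markov property yields a function $f\in\cB$ with $0\leq f\leq 1$ (hence $\|f\|_\infty\leq 1$) such that
\[
P(\Del\mid\Upsilon_{k+n}=x)=f(x),\qquad P(\Del\mid\Upsilon_k=x)=(R^n f)(x),
\]
and, by stationarity, $P(\Del)=\mu(f)$.

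Applying (\ref{Conv}) to $f$ gives
\[
|(R^n f)(x)-\mu(f)|\leq\|f\|_\infty\tau(n)\leq\tau(n),\qquad x\in\cX.
\]
Hence, $P$-almost surely, $|P(\Del\mid\cF_{-\infty,k})-P(\Del)|\leq\tau(n)$. Integrating over $\Gam$ and dividing by $P(\Gam)$,
\[
\Big|\frac{P(\Gam\cap\Del)}{P(\Gam)}-P(\Del)\Big|
=\Big|\frac{1}{P(\Gam)}\int_\Gam\bigl(P(\Del\mid\cF_{-\infty,k})-P(\Del)\bigr)\,dP\Big|\leq\tau(n).
\]
Taking suprema over $\Gam$ and $\Del$ (and then the monotone class extension to arbitrary $\Del\in\cF_{k+n,\infty}$, which accounts for at most a harmless doubling in the approximating constants) gives $\phi(\cF_{-\infty,k},\cF_{k+n,\infty})\leq 2\tau(n)$, and then taking the supremum over $k$ yields $\phi(n)\leq 2\tau(n)$.

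The only mildly delicate step is the passage from cylinder sets to arbitrary $\Del\in\cF_{k+n,\infty}$; this is routine by a $\pi$-$\lambda$ / monotone class argument since the cylinder class is a $\pi$-system generating $\cF_{k+n,\infty}$, and the bound $\tau(n)$ on $|P(\Del\mid\cF_{-\infty,k})-P(\Del)|$ is stable under monotone limits. The negative-$k$ convention $\cF_{n,m}=\cF_{0,\max(0,m)}$ requires no separate treatment, since then either the pair of $\sigma$-algebras is trivial or it reduces to the already-handled non-negative case.
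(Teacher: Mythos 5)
Your proof is correct in substance and rests on the same core observation as the paper: the Markov property collapses $P(\Delta\mid\cF_{-\infty,k})$ to a function of $\Upsilon_k$ of the form $(R^n f)(\Upsilon_k)$ with $\|f\|_\infty\leq 1$, so that (\ref{Conv}) yields the mixing bound. The difference is in how the bound over all $\Delta\in\cF_{k+n,\infty}$ is obtained. You work directly with the event-based definition (\ref{general phi mixing}) and therefore need to approximate an arbitrary $\Delta$ by cylinders, whereas the paper invokes the Bradley identity $\phi(\cG,\cH)=\tfrac12\sup\{\|\bbE[h\mid\cG]-\bbE h\|_\infty : h\in L^\infty(\cH),\,\|h\|_\infty\leq 1\}$ and applies the Markov property to $H_{n+k}=\bbE[h\mid\Upsilon_0,\ldots,\Upsilon_{n+k}]$, which is automatically a bounded function of $\Upsilon_{n+k}$; this sidesteps the cylinder-to-general-set passage entirely and is the cleaner route.

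One point in your write-up deserves tightening. The remark that the monotone class extension ``accounts for at most a harmless doubling'' reflects a genuine subtlety: the class of $\Delta$'s satisfying $|P(\Delta\mid\cF_{-\infty,k})-P(\Delta)|\leq\tau(n)$ a.s.\ is not obviously a $\lambda$-system, because taking a proper difference $\Delta_2\setminus\Delta_1$ naively doubles the bound. The way to make this precise without loss is to observe that finite-dimensional cylinders $\{(\Upsilon_{k+n},\ldots,\Upsilon_{k+n+j})\in B\}$ form an \emph{algebra} on which the bound $\tau(n)$ holds (the Markov-property argument applies verbatim to any such $B$, giving a function $f$ with $0\leq f\leq 1$), and then any $\Delta\in\cF_{k+n,\infty}$ is approximated in $|Q_1-Q_2|$-measure by algebra elements, where $Q_1(\cdot)=P(\cdot\cap\Gam)/P(\Gam)$ and $Q_2=P$; this preserves the constant $\tau(n)$ and in fact proves the stronger bound $\phi(n)\leq\tau(n)$. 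Either way, the stated $\phi(n)\leq 2\tau(n)$ follows, so your conclusion stands, but as written the passage from cylinders to general $\Delta$ is the one step that a careful referee would ask you to justify in full.
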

\begin{proof}
First (see \cite{Br}, Ch. 4), for any two sub-$\sigma-$algebras $\cG,\cH\subset\cF$ we have
\begin{equation}\label{Phi-Rel-StPaper}
\phi(\cG,\cH)=\frac12\sup\{\|\bbE[h|\cG]-\bbE h\|_{\infty}\,:  h\in L^\infty(\Om,\cH,P),\,\|h\|_{\infty}\leq1\}
\end{equation}
where $\phi(\cG,\cH)$ is defined in \ref{general phi mixing} (so $\phi(n)$ is given by (\ref{MixCoef1})).
Let $k$ and $n$ be nonnegative integers, and set 
\[
\cG=\sig\{\Upsilon_0,\Upsilon_1,...\Upsilon_k\},\,\,\text{ and }\,\,\cH=\sig\{\Upsilon_m:\,m\geq k+n\}.
\]
Let $h$ be a $\cH$-measurable random variable which is bounded $P$-a.s. by $1$. Then we can write $h=H(\Upsilon_{k+n},\Upsilon_{k+n+1},...)$ for some measurable function $H$ so that $|H|\leq1$. Set $H_{n+k}=\bbE[h|\Upsilon_0,\Upsilon_1,...,\Upsilon_{n+k}]$. Then $H_{n+k}=H_{n+k}(\Upsilon_{n+k})$ is a function of $\Upsilon_{n+k}$, and 
\[
\bbE[h|\cG]=\bbE[H_{n+k}(\Upsilon_{n+k})|\cG]=R^{n}H_{n+k}(\Upsilon_k)
\]
where we used that $\sig\{\Upsilon_0,\Upsilon_1,...,\Upsilon_{n+k}\}$ is finer than $\cG$ (and the tower property of conditional expectations).
Using (\ref{Conv}) and taking into account that  $|H_{n+k}|\leq1$ and that 
$\bbE h=\bbE H_{n+k}(\Upsilon_{n+k})=\mu_{n+k}(H_{n+k})$ we obtain that 
\[
\|\bbE[h|\cG]-\bbE h\|_{L^\infty}\leq \tau(n).
\]
Taking the supremum over all the above functions $h$ and using (\ref{Phi-Rel-StPaper}) we obtain that 
\[
\phi(\cG,\cH)\leq 2\tau(n).
\]
Taking the supremum over all choices of $k$ completes the proof of the lemma.
\end{proof}

Let  $f=(f_1,...,f_d)$ is a measurable bounded function and for each $n\geq 0$ set $\xi_n=f(\Upsilon_n)$. Then, in the notations of Section \ref{Sec1} we have  $\be_q(r)=0$ for any $r\geq0$ and $q$. Therefore, all he results stated there hold true with the stationary sequence $\{\xi_n\}$ when $\tau(n)$ satisfies $\tau(n)\leq dn^{-\theta}$ for some $d>0$ and $\theta>0$. When the random variables $\xi_n$ above only belong to $L^w$ for some $w>2$ then we our results hold true when 
 $\tau(n)\leq dn^{-\theta}$ for some $d>0$ and $\theta>\frac{4w}{w-2}$. In particular we can consider the case when $\tau(n) $ converges exponentially fast to $0$. 
Suppose now that  $(\cX,\rho)$ is a metric space. Let $\rho_\infty$ be the metric on $\cX^\bbN$  given by 
\[
\rho_\infty(x,y)=\sum_{j=1}^\infty 2^{-j}\frac{\rho(x_j,y_j)}{1+\rho(x_j,y_j)},\,\, x=(x_j),\, y=(y_j).
\]
Let $f:\cX^\bbN\to\bbR^d$ be a H\"older continuous function with respect to the metric $\rho_\infty$, and for each $n\geq0$ set  $\xi_n=f(\Upsilon_n,\Upsilon_{n+1},\Upsilon_{n+2},...)$. In these circumstances, it is clear that 
\[
\beta_\infty(r)=\sup_{n\geq 1}\|\bbE[\xi_n|\cF_{n-r,n+r}\|_{L^\infty}\leq A\del^r
\]
for some constants $A>0$ and $\del\in(0,1)$. Hence, $\tau(n)\leq dn^{-\theta}$, then all the results stated in Section \ref{Sec1} hold true for the stationary sequence $\{\xi_n\}$ defined above. Here $\theta>\frac{4w}{w-2}$, $d>0$ and $w>2$ satisfies that $\|\xi_n\|_{L^w}<\infty$ (assuming that such a $w$ exists).

We can also consider several types of linear Markov processes, described in what follows. Let $\{\Upsilon_n:\,n\in\bbZ\}$ be a two sided stationary Markov chain with transition operator $R$ and stationary distribution $\mu$. Then Lemma \ref{MC mix lemma} also holds true with the $\sig-$algebras $\cF_{n,m}=\sig\{\Upsilon_n,...,\Upsilon_m\}$. Let $(a_n)$ be a two sided sequence such that $\sum_{n\in\bbZ}|a_n|<\infty$ and let $f:\cX\to\bbR$ be a bounded function so that $\int f d\mu=0$. For each $i$ set
\[
\xi_i=\sum_{n\in\bbZ}a_{n}f(\Upsilon_{n+i}).
\] 
Then $\{\xi_i:\,i\in\bbZ\}$ is a bounded stationary sequence of random variables.
Observe that for each $n$ and $k\geq0$,
\begin{eqnarray}\label{f cor }
|\bbE[f(\Upsilon_n)f(\Upsilon_{n+k})]|=|\bbE[R^kf(Y_n)f(Y_n)]|\\\leq 
\|f\|_\infty\|R^k f\|_\infty=\|f\|_\infty\|R^k f-\mu(f)\|_\infty
\leq \|f\|_\infty^2\tau(k)\nonumber
\end{eqnarray}
where in the last inequality we used (\ref{Conv}).
Suppose that $\sum_{n=0}^\infty\tau(n)<\infty$. Then,
using (\ref{f cor }), a direct calculation shows that for any $r>0$,
\[
\Big\|\xi_i-\sum_{|n|\leq r}a_{n}f(\Upsilon_{n+i})\Big\|_{L^2}\leq C\sum_{|n|>r}|a_n|
\]
where $C$ is some constant. Therefore, the approximation coefficients $\beta_2(r)$ defined in Section \ref{Sec1} satisfy 
\[
\beta_2(r)\leq C\sum_{|n|>r}|a_n|.
\]
Thus, when $\sum_{|n|>r}|a_n|$ and $\tau(r)$ converge to $0$ sufficiently fast as $r\to\infty$, all of the results stated in Section \ref{Sec1} (with $q=2$) hold true with the stationary sequence $\{\xi_i:\,i\geq0\}$.

\subsection{Subshifts of finite type (and uniformly hyperbolic and distance expanding maps) and continued fraction expansions}
Next, we recall the definition of a (topologically mixing) subshift of finite type. Let $d>1$ be a positive integer and set $\cA=\{1,2,...,d\}$. We consider here $\cA$ as a discrete topological space, and let $\cX=\cA^{\bbN\cup\{0\}}$ be the product (topological) space. We define a metric on $\cX$ by 
\[
d(x,y)=2^{-\inf\{n:\,x_n\not=y_n\}}
\]
where we set $\inf\emptyset=\infty$ and $2^{-\infty}=0$.
Then the product topology is generated by this metric.
Let $A=(A_{i,j})$ be a $d\times d$ matrix with $0-1$ entries. Suppose  that $(A^M)_{i,j}>0$ for some $M$ and all $1\leq i,j\leq d$, and set 
\[
\Sigma(A)=\{(x_i)\in\cX:\,A_{x_i,x_{i+1}}=1\,\,\forall\,i\geq0\}.
\]
Let $T:\Sigma(A)\to\Sigma(A)$ be the left shift given by 
\[
(Tx)_i=x_{i+1},\,i\geq0.
\]
Then $\Sig(A)$ is $T$ invariant.
Let $\mu$ be any invariant Gibbs measure (see \cite{Bow}). For each finite word $(a_0,...,a_r)\in\cA^{r+1}$ we define its corresponding cylinder set $[a_0,...,a_r]$ to be the set of all $x\in\Sigma(A)$ so that $x_i=a_i$ for any $1=0,1,...,r$. The length of such a set is defined to be $r+1$.
Let
$\cF_{0,n}$ be the $\sig$-algebra generated by all cylinder sets of length $n$ and for each $0\leq n\leq m$ set $\cF_{n,m}=T^{-n}\cF_{0,m-n}$. When $n$ is negative we set $\cF_{n,m}=\cF_{0,\max(0,m)}$.
Then (see \cite{Bow}), these $\sig$-algebras satisfy that $\phi(n)\leq A\del^n$ for some $A>0$ and $\del\in(0,1)$ (in fact, we also have exponentially fast $\psi$-mixing).

For each $n\geq 0$ set $\xi_n(x)=T^nf(x)$, where $f=(f_1,...,f_d)$ is an $\bbR^d$-valued function, each $f_i$ is a H\" older continuous function, and $x$ is distributed according to $\mu$. Then 
\[
\beta_\infty(r)=\sup_{n\geq0}\|\xi_n-\bbE[\xi_n|\cF_{n-r,n+r}]\|_{L^\infty}\leq Ca^r
\]
for some $C>0$ and $a\in(0,1)$. Note that when $f$ is constant on cylinder sets (and hence H\"older continuous) then $\be_\infty(r)=0$ for any sufficiently large $r$.
 We conclude that all the results stated in Section \ref{Sec1} hold true for the sequences $\{\xi_n\}$ defined above.
Using \cite{Bow}, we obtain that the results from Section \ref{Sec1} also hold in the case when 
$\xi_n=T^nf$, where $f=(f_1,...,f_d)$, $T$ is a hyperbolic diffeomorphism or an expanding transformation
taken with a Gibbs invariant measure, and
 each $f_i$ is either H\" older
continuous or piecewise constant on elements of Markov partitions.

Next, set $\cX=(0,1)\setminus\bbQ$, let $T:\cX\to\cX$ be the Gauss map which is given by $Tx=\frac 1x-[\frac 1x]$, and let $\mu$ be the unique absolutely continuous $T$-invariant probability measure given by $\mu(A)=\frac1{\ln 2}\int_{A}\frac1{1+x}dx$.
Let $\cA$ be the partition of $\cX$ into the intervals $I_n=(\frac 1{n+1},\frac 1n)$, where $n=1,2,...$. For each $i=0,1,2,...$ let $n_i(x)$ be the unique positive integer so that $T^{i}x\in I_{n_i(x)}$.
Then the map $x\to(n_0(x),n_1(x),...)$ represents the continued fraction expansion of $x$.
Set $\cF_{n,m}=\bigvee_{j=n_0}^{m_0}T^{-j}\cA$, where $n_0=\max(0,n)$ and $m_0=\max(0,m)$. Then these $\sigma-$algebras are exponentially fast $\psi$-mixing, and, in particular $\phi(n)\leq A\del^n$ for some $A>0$, $\del\in(0,1)$ and all nonnegative integers $n$. Moreover, the partition $\cF_{0,n}$ is a partition into intervals whose lengths do not exceed $Ce^{-cn}$  for some constants $c,C>0$. Therefore, all the conditions from Section \ref{Sec1} also holds true for stationary sequences of the form $\xi_n=f\circ T^n(x)$, where $x$ is distributed according to $\mu$ and $f:[0,1]\to\bbR^d$ is either a H\"older continuous function or a function which is constants of the elements of the partition $\cF_{0,r}$ for some fixed $r$.

\subsection{Extension to Young towers}\label{YoungSec}
Let $(\Del,\nu,T)$ be the noninvertible and mixing Young tower considered in
\cite{Young2} (or the projected tower considered in Section 3 of \cite{Young1}). Let $\Del_0$ be the base of the tower, $R:\Del_0\to\Del_0$ be the return time function and let $d(x,r)=\be^{s(x,y)},\,\beta\in(0,1)$ be the dynamical distance 
defined by the separation time $s(x,y)$ from \cite{Young2} (or the one on the projected tower in \cite{Young1}). In this section we will denote the levels of the tower by $\Del_\ell,\,\ell\geq0$ where $\Del_0$ is identified with $\Del_0\times\{0\}$ and for each $\ell>0$,
\[
\Del_\ell=\{(x,\ell):\,x\in\Del_0\,\,R(x)>\ell\}.
\]
Let $\cL$ be the transfer operator associated with the tower $T$ (so $\nu$ is its  conformal measure- the eigen-measure of $\cL$) and $h$ be the eigenfunction of $\cL$ (see \cite{Young1} and \cite{Young2}). Let us denote by $\cA^n$ the $\sig$-algebra  generated by all cylinder sets of length $n$, and
let  the $\sig-$algebras $\cF_{n,m}$ be given by $\cF_{n,m}=T^{-n}\cA^{m-n}$, $0\leq n\leq m$ while when $n<0$ we set $\cF_{n,m}=\cF_{0,\max(0,m)}$.

 We will consider here processes of the form $\xi_n(x)=f\circ T^n(x)$ where $f:\Del\to\bbR^d$ is a H\"older continuous function so that $\|f\|_{L^{2q}(\nu)}<\infty$ for some $q\geq 1$, and $x$ is distributed according to the absolutely continuous invariant measure $\mu$ given by $d\mu=hd\nu$.  In this case, it is clear that 
\begin{equation}\label{YT approx}
\be_\infty(r)=\sup_{n\geq0}\|\xi_n-\bbE[\xi_n|\cF_{n-r,n+r}]\|_{L^\infty}\leq Ce^{-cr}
\end{equation}
for some $C,c>0$, since we can approximate $f$ uniformly by functions which depend only on elements of the partition $\cA^r$, and $\bbE[\xi_n|\cF_{n-r,n+r}]=\bbE[f|\cA^r]\circ T^n$.

 The family of $\sig$-algebras $\cF_{n,m}$ does not seem to be $\phi$-mixing in the sense of Section \ref{Sec1}, and so we can not apply the results from Section \ref{Sec1}. Note that when the tails $\mu\{R>j\}$ decay polynomially sufficiently fast to $0$ then the map $T^R:\Del_0\to\Del_0$ is $\phi$-mixing (see Lemma 2.4 (b) in \cite{Melb}), in the sense that its corresponding family of $\sig-$algebras $\cF_{n,m}$ is (left) exponentially $\phi$-mixing. Relying on this we could probably extend the results from Section \ref{Sec1} under certain restriction on the behaviour of the nonconventional sums between two consecutive returns to the base $\Del_0$.
Still, we claim that all the results stated in Section \ref{Sec1} hold true for the above sequence $\{\xi_n\}$ when $\nu\{R>n\}\leq An^{-d}$ for some $A>0$ and a sufficiently large $d>0$, without restrictions of that kind. 

Using tower extensions, our results  also hold true in the case  when $\xi_n=f\circ T^n$, for several classes of map which satisfy a certain hyperbolicity or expansion conditions only on some peices of the underlying manifold (or, for any non-invertible dynamical systems that can be modeled by a Young tower). Results in the invertible case also follow (in the exponential tails case), by considering first the projected tower (see Section 3 in \cite{Young1}) and then proceeding essentially as in Section 4 in \cite{Young1} in order to derive Theorems \ref{VarThm} and \ref{CLTthm} for the original system from the corresponding limit theorems on the projected tower. We refer the readers to \cite{Young1}, \cite{Young2}, \cite{Melb} and \cite{Hyd} for examples of  maps $T$ which can be modelled by towers.

In the rest of the section  we will explain how to obtain the results from Section \ref{Sec1} in the above Young tower setup. First, 
let $v>0$ be a function which is constant on the levels $\Del_\ell$ of the tower and define a transfer operator $L$ by
\[
Lf=\frac{\cL(fv)}{v}.
\]
Then the measure $\nu_L$ given by $d\nu_L=vd\nu$ is conformal with respect to $L$ and the function $h_L=h/v$ is preserved under $L$. For any measuable set $A$,  let us denote by $\bbI_A$ its indicator function. We will rely on

\begin{lemma}\label{tower mix lemma}
There exists a constant $C>0$ so that
for any $n\geq0$, $\,A\in\cA^n$ and an arbitrary measurable set $B$,
\begin{eqnarray*}
|\nu\big(h\bbI_A\cdot \bbI_B\circ T^{k+n}\big)-\nu(h\bbI_A)\nu(h\bbI_B)|\leq 
C\nu_L(B)\sup_{g: \|g\|=1}\|L^k(g/v)-\nu_L(g/v)h_L\|_\infty\\=
C\nu_L(B)\sup_{g: \|g\|=1}\|\cL^k(g)/v-\nu(g)h/v\|_\infty.
\end{eqnarray*}
Here $\|\cdot\|_\infty$ stands for the supremum norm and $\|g\|:=\|g\|_\infty+K_g$, where $K_g$ is the infimum of the set of values $K$ so that $|g(x)-g(y)|\leq Kd(x,y)$ for any $x,y$ in the same level of the tower.
\end{lemma}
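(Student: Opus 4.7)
The plan is to convert the left-hand side into an iterate of $L$ applied to a single seed function $f:=\cL^n(h\bbI_A)$, and then read the bound off the operator norm of $g\mapsto L^k(g/v)-\nu_L(g/v)h_L$. First, I would use the transfer-operator duality $\int\phi\cdot(\psi\circ T)\,d\nu=\int\cL\phi\cdot\psi\,d\nu$, iterated $k+n$ times with $\phi=h\bbI_A$ and $\psi=\bbI_B$, to obtain
\[
\nu\bigl(h\bbI_A\cdot\bbI_B\circ T^{k+n}\bigr)=\int\cL^{k+n}(h\bbI_A)\,\bbI_B\,d\nu=\int\cL^k(f)\,\bbI_B\,d\nu.
\]
Since $\nu$ is an eigenmeasure of $\cL$ with eigenvalue $1$, we have $\int\cL\phi\,d\nu=\int\phi\,d\nu$, whence $\nu(f)=\nu(h\bbI_A)$; and $\nu(h\bbI_B)=\int h\bbI_B\,d\nu$ by definition. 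Subtracting the product reduces the claim to bounding $\bigl|\int(\cL^k(f)-\nu(f)h)\bbI_B\,d\nu\bigr|$.

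Next I would pass from $\cL$ to $L$. From $L\psi=\cL(\psi v)/v$ and $d\nu_L=v\,d\nu$ one reads off $\cL^k(\phi)=v\,L^k(\phi/v)$, $\nu(\phi)=\nu_L(\phi/v)$, and $h=v h_L$. Inserting these yields $\cL^k(f)-\nu(f)h=v\bigl[L^k(f/v)-\nu_L(f/v)h_L\bigr]$, and hence
\[
\Bigl|\int(\cL^k(f)-\nu(f)h)\bbI_B\,d\nu\Bigr|=\Bigl|\int\bigl(L^k(f/v)-\nu_L(f/v)h_L\bigr)\bbI_B\,d\nu_L\Bigr|\leq\nu_L(B)\,\bigl\|L^k(f/v)-\nu_L(f/v)h_L\bigr\|_\infty.
\]
Since $g\mapsto L^k(g/v)-\nu_L(g/v)h_L$ is linear, homogeneity gives
\[
\bigl\|L^k(f/v)-\nu_L(f/v)h_L\bigr\|_\infty\leq\|f\|\sup_{g:\|g\|=1}\bigl\|L^k(g/v)-\nu_L(g/v)h_L\bigr\|_\infty,
\]
which is the desired right-hand side up to the factor $\|f\|$.

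The main obstacle is therefore the uniform estimate $\|f\|=\|\cL^n(h\bbI_A)\|\leq C$ in $n\geq0$ and $A\in\cA^n$. The sup-norm piece is painless: by positivity of $\cL$ and the invariance $\cL^n h=h$ one has $0\leq\cL^n(h\bbI_A)\leq\cL^n h=h$, so $\|\cL^n(h\bbI_A)\|_\infty\leq\|h\|_\infty$. The Lipschitz-on-levels piece is the delicate point: one must expand $\cL^n(h\bbI_A)(y)$ as a sum over the inverse branches of $T^n$ that meet $A$ and map into $\{y\}$, and show that on each level $\Del_\ell$ the resulting function is Lipschitz in the dynamical distance with a constant independent of $n$, $A$ and $\ell$. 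This is precisely the bounded-distortion estimate built into Young's construction; the weight $v$, which is constant on each level, is chosen exactly so that the per-branch contributions are comparable across levels, and the required bound follows from the Lasota--Yorke-type inequality for $L$ on the Banach space with norm $\|\cdot\|$ (cf.\ \cite{Young2}, \cite{Melb}). Once $\|f\|\leq C$ is in hand, the constant $C$ in the lemma may be taken to be this supremum, and the proof is complete.
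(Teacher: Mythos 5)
Your argument matches the paper's: both reduce the covariance to $\int_B\bigl(L^k(f/v)-\nu_L(f/v)h_L\bigr)d\nu_L$ with $f=\cL^n(h\bbI_A)$ and then bound $\|f\|=\|f\|_\infty+K_f$ uniformly in $n$ and $A\in\cA^n$, the sup-norm piece coming from $0\leq\cL^n(h\bbI_A)\leq\cL^n h=h$. For the Lipschitz seminorm the paper invokes a distortion estimate for $\cL^n$ applied to $h\bbI_A$ on an $n$-cylinder (Lemma 4 of \cite{Hyd}) rather than a Lasota--Yorke inequality for $L$ (which would not apply directly, since $h_L\bbI_A$ is discontinuous and not in the Lipschitz space), but this is exactly the bounded-distortion mechanism you describe, so the route is essentially the same.
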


Before proving this lemma we will first explain how it will be used.
In the setup of Section 3 of \cite{Young1} (the projected tower setup), for some $v$ so that $v_\ell:=v|\Del_\ell=e^{\ve\ell}$ (where $\Del_\ell$ is the $\ell$-th floor and $\ve>0$ is some constant), in Section 3 of \cite{Young1} (Proposition A) it was proved that 
\[
\sup_{g: \|g\|=1}\|\cL^k(g)/v-\nu(g)h/v\|\leq A\del^k
\]
for some $A>0$ and $\del\in(0,1)$. In the setup of \cite{Young2},  when $\nu\{R>j\}\leq Cj^{-d}$ for some $C>0$ and $d>2$ then 
by taking $v_\ell=\ell^{d-2}$ (so that $\sum_{\ell}v_\ell\nu(R>\ell)<\infty$) we obtain from
Proposition 3.13 in \cite{Viv} that
\[
\sup_{g: \|g\|=1}\|L^n(g/v)-\nu_L(g/v)h_L\|_\infty\leq An^{-(d-2)}
\]
for some constant $n$. In fact, also the exponential case is considered in \cite{Viv} and it is possible also to get the same estimates with the norm $\|\cdot\|$ in place of the supremum norm.

When $v|\Del_\ell=e^{\ve\ell}$ and $B$ is contained in a the union of the first $j$ floors we get that
\[
\nu_L(B)=\nu(v\cdot\bbI_B)\leq e^{\ve j}\nu(B).
\]
Hence, there exists a constant $c>0$ so that for any $j$ and $k$ satisfying
$j\leq ck$ we have
\begin{equation}\label{MixAppl}
\phi(T^{-(n+k)}\cG_j,\cA^n)\leq Ae^{-ak}
\end{equation}
where $A,a>0$ and $\cG_j$ is the induced $\sig$-algebra on the union of the first $j$ floors.  Similarly, when $v_\ell=\ell^{d-2}$, then for any $j, k$ and $\al\in(0,1)$ so that  $j\leq k^\alpha$ we have
\begin{equation}\label{MixApp2}
\phi(T^{-(n+k)}\cG_j,\cA^n)\leq A k^{-(d-2)(1-\alpha)}
\end{equation}
for some constant $A>0$. We will show after the proof of Lemma \ref{tower mix lemma} how to use (\ref{MixApp2}) in order to derive all the results stated in Section \ref{Sec1} in the Young tower case.

\begin{proof}[Proof of the Lemma \ref{tower mix lemma}]
Let $n,A$ and $B$ be as in the statement of the lemma.
Write 
\begin{eqnarray*}
\nu\big(h\bbI_A\cdot \bbI_B\circ T^{k+n}\big)-\nu(h\bbI_A)\nu(h\bbI_B)=
\nu_L\big(h_L\bbI_A\cdot \bbI_B\circ T^{k+n}\big)-\nu_L(h_L\bbI_A)\nu_L(h_L\bbI_B)\\=
\nu_L\big(L^{k+n}(h_L\bbI_A)\cdot \bbI_B\big)-\nu_L(h_L\bbI_A)\nu_L(h_L\bbI_B)=
\int_B\big(L^k(f_n)-\nu_L(f_n)h_L\big)d\nu_L
\end{eqnarray*}
where 
\[
f_n=L^n(h_L\bbI_A)=\frac{\cL^n(h\bbI_A)}{v}.
\]
Observe that 
\[
\|vf_n\|_\infty\leq\|\cL^nh\|_\infty=\|h\|_\infty<\infty.
\]
Moreover, by Lemma 4 in \cite{Hyd}, for any $x,y$ in the same floor we have 
\[
|(vf_n)(x)-(vf_n)(y)|\leq c_1\cL^n(h\bbI_A)(y)d(x,y)\leq c_1\|h\|_\infty d(x,y):=C_1d(x,y)
\]
where $c_1$ is some constant which does not depend on $n$ and $A$
(note that similar estimates appear in the Sublemma at the beginning of Section 4.2 in \cite{Young1}).
We conclude that 
\begin{eqnarray*}
|\nu\big(h\bbI_A\cdot \bbI_B\circ T^{n+k}\big)-\nu(h\bbI_A)\nu(h\bbI_B)|\leq 
C\nu_L(B)\sup_{g: \|g\|=1}\|L^k(g/v)-\nu_L(g/v)h_L\|_\infty\\=
C\nu_L(B)\sup_{g: \|g\|=1}\|\cL^k(g)/v-\nu(g)h/v\|_\infty
\end{eqnarray*}
where $C$ is some constant.
\end{proof}

Next, we will explain how to use Lemma \ref{tower mix lemma} in order to obtain functional central limit theorems for nonconventional polynomial arrays in the case when $\xi_n(x)=f\circ T^n(x)$ discussed at the beginning of this section.
For any $j\geq 0$, let $\Del^{(j)}$ be the union of the first $j$ floors, and let $\chi_j$ be its indicator set. Then there exists a constant $C>0$ so that for any $r\geq0$,
\begin{eqnarray*}
\|\bbE[f|\cA^r](1-\chi_j)\|_q\leq \|f\|_{2q}\big(\mu\{R>j\}\big)^{\frac 1{2q}}\\\leq 
\|g\|_{2q}\cdot(\|h\|_\infty)^{\frac 1{2q}}\big(\nu\{R>j\}\big)^{\frac 1{2q}}
\leq Cj^{-d/2q}
\end{eqnarray*}
where we assumed that the tails decay at least as fast as $j^{-d}$, that $\|f\|_{2q}<\infty$ and we write $\|\cdot\|_q=\|\cdot\|_{L^q(\Del,\mu)}$.
Taking into account (\ref{YT approx}) with $n=0$
we conclude that
\begin{eqnarray*}\label{New approx}
\,\,\,\,\,\,\be_{q}(r,j):=\sup_{n\geq0}\|\xi_n-\big(\bbE[f|\cA^r]\cdot\chi_j\big)\circ T^n\|_{q}=
\sup_{n\geq0}\|(f-\bbE[f|\cA^r]\chi_j)\circ T^n\|_q\\=\|f-\bbE[f|\cA^r]\chi_j\|_q\leq 
\|f-\bbE[f|\cA^r]\|_q+\|\bbE[f|\cA^r](1-\chi_j)\|_q
\leq C_1(e^{-cr}+j^{-d/2q})\nonumber
\end{eqnarray*}
for some constant $C_1>0$. 
Using the approximation coefficients $\be_{q}(r,j)$ in place of $\be_{q}(r)$ from Section \ref{Sec1}, and Lemma \ref{tower mix lemma}, the proofs of all the results stated in Section \ref{Sec1} proceed essentially in the same way when $d/2q$ is sufficiently large. Indeed, all the results from Sections \ref{Sec3} and \ref{Var Sec} rely only on Proposition \ref{1.3.14} together with several combinatorial arguments. This corollary has an appropriate version which involves the approximation coefficients $\be_{q}(r,j)$ (instead of $\be_q(r)$), since Proposition \ref{1.3.11} can be derived also using the right $\phi$-mixing coefficients. Using this version of Proposition \ref{1.3.14}, we obtain all the results from Sections \ref{Sec3} and \ref{Var Sec} also in the Young tower case.
Relying on the above version of  Proposition \ref{1.3.11}, Theorem \ref{ThmFunc} can be applied successfully also in the Young tower case similarly to Section \ref{CLT sec}, using the seqeunce $\be_{q}(r(N),j(N))$ instead of $\be_q(r(N))$, where $r(N)=[l(N)/3]$ is the same as in Section \ref{CLT sec} and $j(N)=[\big(r(N))^\alpha]$ for a sufficiently small $\alpha\in(0,1)$. Note that in the appropriate applications of (\ref{MixApp2}) we have to take $k=r(N)$ and so, $j=j(N)$ will indeed satisfy $j\leq k^\alpha$.


 

\end{document}